\newif\ifdraft
\draftfalse
%\drafttrue

\documentclass[10pt, reqno]{amsart}
\sloppy
\textwidth21cm
\textheight29.7cm
\oddsidemargin2.2cm
\evensidemargin
\oddsidemargin
\topmargin2.5cm
\parindent0pt

\makeatletter\@addtoreset{equation}{section}\makeatother

\addtolength{\textwidth}{-\oddsidemargin}
\addtolength{\textwidth}{-\evensidemargin}
\addtolength{\textheight}{-2\topmargin}
\addtolength{\textheight}{-\headheight}
\addtolength{\textheight}{-\headsep}
\addtolength{\textheight}{-\footskip}

\hoffset-1in \voffset-1in

\usepackage{amsmath, amsthm, latexsym, amssymb, bbm, graphicx, subcaption}
\captionsetup[subfigure]{labelformat=empty}

\usepackage{listings} %to insert code with colors and comments
\usepackage{comment} %to comment-out portions, do not leave empty spaces around command, goes nuts
\usepackage{xcolor}
\usepackage[citecolor=blue]{hyperref}
\hypersetup{
	colorlinks=true,
	linkcolor=blue,
	urlcolor=blue,
}
\urlstyle{same}
%%%footnote stuff for address, mine too long
%\usepackage[hang]{footmisc}
%\renewcommand{\footnotemargin}{2mm}
%%%

%%%COMMENTS STUFF
\newcounter{sideremark}
\newcommand{\marrow}[1]{\stepcounter{sideremark}\marginpar{$\color{#1}\boldsymbol{\longleftarrow\scriptstyle    \color{#1}\arabic{sideremark}}$}}

\newcommand{\nnote}[1]{
	\ifdraft{
		\textsf{{\color{red}*** (Nadia) \marrow{red} #1 ***}}}\fi}

\newcommand{\gnote}[1]{
	\ifdraft{
		\textsf{{\color{blue}*** (Giordano) \marrow{blue} #1 ***}}}\fi}

\definecolor{dkgreen}{rgb}{0,0.6,0}
\definecolor{gray}{rgb}{0.5,0.5,0.5}
\definecolor{mauve}{rgb}{0.58,0,0.82}

\lstset{frame=tb,
	language=Java,
	aboveskip=3mm,
	belowskip=3mm,
	showstringspaces=false,
	columns=flexible,
	basicstyle={\small\ttfamily},
	numbers=none,
	numberstyle=\tiny\color{gray},
	keywordstyle=\color{blue},
	commentstyle=\color{dkgreen},
	stringstyle=\color{mauve},
	breaklines=true,
	breakatwhitespace=true,
	tabsize=3
}

%%%

\newcommand{\e}{\varepsilon}

\renewcommand{\phi}{\varphi}

\newcommand{\ssup}[1] {{\scriptscriptstyle{({#1}})}}
\newcommand{\sssup}[1] {{\scriptscriptstyle{[{#1}}]}}
\newcommand{\vertiii}[1]{{\left\vert\kern-0.25ex\left\vert\kern-0.25ex\left\vert#1\right\vert\kern-0.25ex\right\vert\kern-0.25ex\right\vert}}
\newcommand{\vertiiiBig}[1]{{\Big\vert\kern-0.25ex\Big\vert\kern-0.25ex\Big\vert#1\Big\vert\kern-0.25ex\Big\vert\kern-0.25ex\Big\vert}}
\newcommand{\vertiiibig}[1]{{\big\vert\kern-0.25ex\big\vert\kern-0.25ex\big\vert#1\big\vert\kern-0.25ex\big\vert\kern-0.25ex\big\vert}}

\newcommand{\one}{{\mathbf 1}}
\newcommand{\1}{\mathbbm{1}}

\newcommand{\R}{\mathbb R}

\newcommand{\N}{\mathbb N}

\newcommand{\E}{\mathbb E}
\renewcommand{\P}{\mathbb P}

\newtheorem{theorem}{Theorem}
\newtheorem{lemma}{Lemma}[section]

\newtheorem{prop}[lemma]{Proposition}

{\nopagebreak {\hfill{$\diamond$}}\\ }
{\nopagebreak {\hfill{$\diamond$}}\\ }

\renewcommand{\phi}{\varphi}

\renewcommand{\P}{\mathbb{P}}
\renewcommand{\E}{\mathbb{E}}

\begin{document}

\title{Edge-reinforced branching random walk on a triangle}

\author[Giordano Giambartolomei and Nadia Sidorova]{}

\maketitle

\centerline{\sc Giordano Giambartolomei\footnote{Department of Informatics, King's College London, Bush House, London WC2B 4BG, UK, {\tt giordano.giambartolomei@kcl.ac.uk}.} and Nadia Sidorova\footnote{Department of Mathematics, University College London, Gower Street, London WC1 E6BT, UK, {\tt n.sidorova@ucl.ac.uk}.
} }

\vspace{0.4cm}

% \centerline{\small(\version)}

%
\vspace{0.4cm}

\begin{quote}
{\small {\bf Abstract:} 
The edge-reinforced random walk (ERRW) is a random process on the vertices of a graph that is more likely to cross the edges it has visited in the past. Depending on the strength of the reinforcement,  the ERRW of a single particle can either exhibit localisation (eventually moving back and forth across a single edge) or remain transient. We consider a model where a single ERRW is replaced by that of an exponentially growing number of random particles, and we study its localisation properties on the triangle. Using the dynamical systems approach we analyse the frequencies with which the edges are traversed and prove their almost sure convergence. We discuss the scenarios when those frequencies become negligible for one or two edges (dominance). We also discuss the situation when an edge stops being traversed entirely (monopoly).

%We consider a balls and bins process in which $\sigma_n$ 
}
\end{quote}
\vspace{5ex}

{\small {\bf AMS Subject Classification:} 
%Primary 
60K35, 60K37, 37A50, 60J80.
%Secondary 68R05, 90B80, 60K35.

{\bf Keywords:} edge-reinforced random walk, branching processes, dynamical systems, dominance, monopoly}
\vspace{4ex}

%\renewcommand{\thefootnote}{}
%\footnote{\textit{AMS Subject Classification:} Primary 60H25
%Secondary 82C44, 60F10.}
%\footnote{\textit{Keywords: } parabolic Anderson model, Anderson Hamiltonian, random potential, intermittency, localisation,  
%Weibull tail, Weibull distribution, Feynman-Kac formula. }

%\renewcommand{\thefootnote}{1}

%\tableofcontents\footnote{Remove Table of Contents}
\bigskip

\section{Introduction}

\subsection{The model and results.}

The edge-reinforced random walk (ERRW) is a variant of a random walk on a graph, introduced by
Coppersmith and Diaconis in~\cite{CopDiac87}%{diaconis}
, where the probability of traversing an edge increases each time that edge is used.
One of the most natural reinforcement mechanisms is the linear reinforcement where that probability depends linearly on the number 
of traversals. In this paper we consider the linear ERRW of not one but an exponentially growing family of particles, driven by a Galton-Watson process, on a triangle graph.
\smallskip

Let $\N_0=\N\cup \{0\}$ and let $\R_+$ be the set of strictly positive reals. We use the numbers $I=\{1,2,3\}$ to mark the vertices and the edges of the triangle graph, and we use the convention that the $i$-th edge is the one not adjacent to the $i$-th vertex. 
We denote by $\oplus$ and $\ominus$ the addition and subtraction modulo three on $I$, respectively. We use the probability space $(\Omega,\mathcal{F}, \P)$ and denote by $\E$ the corresponding expectation. 
\smallskip

For each $i\in I$ and $n\in \N_0$, we denote by $N_n^{\ssup i}$ the number of particles at the vertex $i$ at the time $n$, and 
by $T_n^{\ssup i}$ the number of traversals of the edge $i$ by all particles by the time $n$, plus some initial weights. 
\smallskip

Let $Z$ be the offspring random variable with the mean $\mu=\E(Z)\in (1,\infty)$ and with the offspring distribution satisfying $\P(Z=0)=0$
and $\E(Z^2)<\infty$.
\smallskip

At time $n$, each particle is replaced by a random number of particles driven by the offspring distribution $Z$, independently of the other particles and of the past. After that, each particle moves to one of the two adjacent vertices choosing between them with the probabilities proportional to the number of previous traversals by all particles plus the initial weights. We define this inductively as follows.
\smallskip

At time $n=0$, let $\mathcal{F}_0=\{\emptyset,\Omega\}$. 
Let $N_0\in \N_0^3$ be the deterministic initial number of particles at the vertices and $T_0\in \R_+^3$ be the initial weights of the edges. 
We assume that we start with at least one particle, i.e.\ $N_0\neq (0,0,0)$. Let $(Z^{\ssup i}_{1,k})$, $i\in I$, $k\in\N$, be a collection of 
independent random variables with the offspring distribution $Z$. Those will be responsible for the first branching. 
Denoting $Z_{1,k}=(Z^{\ssup i}_{1,k})_{i\in I}$, 
let  $\mathcal{F}_0^{\star}=\sigma(\mathcal{F}_0, (Z_{1,k})_{k\in\N})$ be the $\sigma$-algebra after the first branching but 
before the first movement. 
\smallskip

Now consider the transition between the times $n$ and $n+1$. 
For each $i\in I$, conditionally on the past $\mathcal{F}_n^{\star}$, let
\begin{align*}
B_{n+1}^{\ssup i}\sim \text{Bin}\Big(\sum_{k=1}^{N_n^{\ssup i}}Z^{\ssup i}_{n+1,k},P_n^{\ssup i}\Big)
\end{align*}
be Binomial random variables with 
\begin{align}
\label{p}
P_n^{\ssup i}= \frac{T^{\ssup{i \ominus 1}}_n}{T^{\ssup{i \ominus 1}}_n+T^{\ssup{i \oplus 1}}_n}
\end{align}
otherwise independent from the past and from each other. They determine how many particles will move from the vertex $i$
to the vertex $i\ominus 1$. To benefit from symmetry, we will also use 
\begin{align*}
\bar B_{n+1}^{\ssup i}=\sum_{k=1}^{N_n^{\ssup i}}Z^{\ssup i}_{n+1,k}-B_{n+1}^{\ssup i},
\end{align*}
which gives the number of particles moving from the vertex $i$
to the vertex $i\oplus 1$. We obviously have 
\begin{align*}
\bar B_{n+1}^{\ssup i}\sim\text{Bin}\Big(\sum_{k=1}^{N_n^{\ssup i}}Z^{\ssup i}_{n+1,k},\bar P_n^{\ssup i}\Big)
\qquad\text{with}\qquad\bar P_n^{\ssup i}=\frac{T^{\ssup{i \oplus 1}}_n}{T^{\ssup{i \ominus 1}}_n+T^{\ssup{i \oplus 1}}_n}
\end{align*}
and $P_n^{\ssup i}+\bar P_n^{\ssup i}=1$. The updated number of traversals and number of particles are then given by  
\begin{align}
\label{def_t}
T^{\ssup i}_{n+1}&=T_n^{\ssup i}+B_{n+1}^{\ssup{i\oplus 1}}+\bar B_{n+1}^{\ssup{i\ominus 1}},\\
N^{\ssup i}_{n+1}&=B_{n+1}^{\ssup{i\ominus 1}}+\bar B_{n+1}^{\ssup{i\oplus 1}}
\label{def_n}
\end{align}
and, denoting $B_{n+1}=(B_{n+1}^\ssup{i})_{i\in I}$, the $\sigma$-algebra after the $(n+1)$-st movement, but before the $(n+2)$-nd branching is $\mathcal{F}_{n+1}=\sigma(\mathcal{F}_n^{\star},B_{n+1})$.
Further, let $(Z^{\ssup i}_{n+2,k})$, $i\in I$, $k\in\N$,  be a collection of random variables which are all independent of each other and of $\mathcal{F}_{n+1}$ and have the offspring distribution $Z$. Finally, denoting $Z_{n+2,k}=(Z^{\ssup i}_{n+2,k})_{i\in I}$, let $\mathcal{F}_{n+1}^{\star}=\sigma(\mathcal{F}_{n+1}, (Z_{n+2,k})_{k\in \N})$ be the $\sigma$-algebra after the 
$(n+2)$-nd branching but before the $(n+2)$-nd movement.
\smallskip

We are interested in the frequency with which each edge is traversed and the proportion of particle at each vertex so we define 
\begin{align}
\label{def_thpi}
\Theta_n^{\ssup i}=\frac{T_n^{\ssup i}}{\Vert  T_n\Vert }
\qquad\text{and}\qquad
\pi_n^{\ssup i}=\frac{N_n^{\ssup i}}{\Vert  N_n\Vert }, \qquad  i\in I, n\in\N,
\end{align}
where $\Vert \cdot\Vert $ denotes the $\ell_1$-norm on $\R^3$ (and we will 
denote by the same symbol the corresponding matrix norm on the space of tree-dimensional square matrices). Note that for all $n$, $\Vert  T_{n+1}\Vert =\Vert  T_n\Vert +\Vert  N_{n+1}\Vert $. Obviously, $\Theta_n=(\Theta_n^{\ssup i})_{i\in I}$ and 
$\pi_n=(\pi_n^{\ssup i})_{i\in I}$ belong to 
the unit simplex 
$$\Sigma=\{p\in [0,1]^3: \Vert  p\Vert =1\}.$$  
Denote by  
$V=\{v_1,v_2,v_3\}$ the set of three vertices of $\Sigma$ and by $\partial\Sigma$ its triangular boundary. 
\smallskip

For each $p=(x,y,z)\in \Sigma\setminus V$, denote
\begin{align*}
M_p=\left(\begin{array}{ccc}
0 & \frac{z}{x+z} & \frac{y}{x+y} \\
\frac{z}{y+z} & 0 & \frac{x}{x+y} \\
\frac{y}{y+z} & \frac{x}{x+z} & 0
\end{array}\right),
\end{align*}
where the shape of the matrix is inherited from the transition probabilities~\eqref{p}. 
Further, let 
\begin{align}
\label{qp}
q_{[p]}=\frac{1}{2}(\one-p),
\end{align}
where $\one=(1,1,1)$. 
Observe that $q_{[p]}$ is an eigenvector of $M_p$ with eigenvalue $1$. Further, if 
$p\in \partial\Sigma\setminus V$ then $M_p$ also has eigenvalues $0$ and $-1$ since $\text{det} (M_p)=0$
and $\text{tr}(M_p)=0$. It is easy to see that the normalised eigenvector corresponding to the eigenvalue $-1$
is given by 
\begin{align}
e_{-1}(p)=\left\{\begin{array}{ll}
\frac 1 2 (y, x, -1) & \text{if } p=(x,y,0),\\
\frac 1 2 (z, -1, x) & \text{if } p=(x,0,z),\\
\frac 1 2 (-1, z, y) & \text{if } p=(0,y,z).\\
\end{array}\right.
\label{e1}
\end{align}
If $p\in V$ then $M_p$ is not well-defined, but we will still define 
\begin{align*}
e_{-1}(v_i)=\frac 1 2 (v_{i\oplus 1}-v_{i\ominus 1}), \quad i\in I.
\end{align*}
Observe that while the 
eigenspace of the eigenvalue $-1$ continuously depends on $p$ on $\Sigma$, the eigenvector is discontinuous at the vertices due to the change of direction. 
\smallskip

For any $p\in\Sigma$, we call $(p,q_{[p]})$ an equilibrium point. Further, for any $p\in\Sigma$ and $\theta\in \R$, we call 
the pair $\{(p,q_{[p]}\pm\theta e_{-1}(p))\}$ 
an equilibrium two-cycle of amplitude $\theta$.   
We say that a sequence $(u_n)$ approaches a two-cycle $\{a,b\}$ if 
either $u_{2n}\to a$, $u_{2n+1}\to b$ or $u_{2n}\to b$, $u_{2n+1}\to a$. 
\smallskip

We are interested in the dynamics of $(\Theta_n,\pi_n)$, and the first step towards this is the following lemma, which will be proved in Section~\ref{s_prelim}.

\begin{lemma} 
\label{lemma_l}
As $n\to\infty$, $\Vert \pi_n-q_{[\Theta_n]}\Vert $ converges almost surely to a 
%$[0,1)$-valued 
random variable $\ell$. 
\end{lemma}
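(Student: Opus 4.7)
My plan is to derive a linear recursion for $R_n := \pi_n - q_{[\Theta_n]}$ of the form $R_{n+1} = L_n R_n + \xi_{n+1}$, with $L_n$ a spectral contraction on the hyperplane $\{v\in\R^3:\sum_i v^{\ssup i}=0\}$ and $\xi_{n+1}$ a centred $L^2$-summable noise, then to invoke a Robbins--Siegmund-type argument to conclude a.s.\ convergence of $\|R_n\|^2$, and hence of $\|R_n\|$.

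To derive the recursion I would introduce two auxiliary probability vectors on $I$: the offspring distribution $\omega_{n+1}^{\ssup i} := W_{n+1}^{\ssup i}/\|N_{n+1}\|$, where $W_{n+1}^{\ssup i} := \sum_{k=1}^{N_n^{\ssup i}} Z_{n+1,k}^{\ssup i}$ (and $\|W_{n+1}\|=\|N_{n+1}\|$, since every offspring particle moves), and the edge-traversal distribution $\tau_{n+1}^{\ssup i} := (T_{n+1}^{\ssup i}-T_n^{\ssup i})/\|N_{n+1}\|$. A per-move counting argument gives the combinatorial identity $\pi_{n+1} + \omega_{n+1} + \tau_{n+1} = \one$: each of the $\|N_{n+1}\|$ individual movements has exactly one source vertex, one destination vertex, and one ``opposite'' vertex labelling the traversed edge, contributing once to each of $\omega_{n+1}$, $\pi_{n+1}$, and $\tau_{n+1}$ respectively. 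Coupling this with $\Theta_{n+1} = (1-\alpha_n)\Theta_n + \alpha_n\tau_{n+1}$ (where $\alpha_n := \|N_{n+1}\|/\|T_{n+1}\|$), the fixed-point identity $M_{\Theta_n} q_{[\Theta_n]} = q_{[\Theta_n]}$, and the formula $\E[\pi_{n+1}\mid\mathcal F_n^{\star}] = M_{\Theta_n}\omega_{n+1}$ (direct from the binomial structure in~\eqref{def_n}), one obtains the recursion with
\begin{align*}
L_n := \tfrac12\bigl[(2-\alpha_n)M_{\Theta_n} - \alpha_n I\bigr],
\end{align*}
and $\xi_{n+1}$ collecting both the centred binomial fluctuations of $\pi_{n+1}$ around $M_{\Theta_n}\omega_{n+1}$ and the small bias $\omega_{n+1}-\pi_n$ coming from the branching step. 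The Kesten--Stigum theorem ($\P(Z=0)=0$ and $\E Z^2<\infty$) ensures $\|N_n\|/\mu^n$ converges a.s.\ to a positive random variable; combined with the branching LLN, both sources of fluctuations have conditional $L^2$-size $O(\|N_n\|^{-1/2})$, so that $\sum_n \E[\|\xi_{n+1}\|^2\mid\mathcal F_n]<\infty$ a.s.

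Since $M_{\Theta_n}$ is column-stochastic, its eigenvalues on the zero-sum hyperplane (which contains $R_n$) have modulus at most one, with equality attained only along the $e_{-1}(\Theta_n)$-direction at $\partial\Sigma$; the same holds for the spectrum of $L_n$ there, as the eigenvalues of $L_n$ are $\tfrac12[(2-\alpha_n)\lambda-\alpha_n]\in[-1,1-\alpha_n]$ for $\lambda$ an eigenvalue of $M_{\Theta_n}$. This spectral information is meant to yield an approximate supermartingale inequality $\E[\|R_{n+1}\|^2\mid\mathcal F_n] \le \|R_n\|^2 + O(\|N_n\|^{-1})$, to which Robbins--Siegmund applies. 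The hardest step is turning the spectral bound into a genuine contractivity statement: $L_n$ is non-normal and its invariant subspaces depend on $\Theta_n$, so one must decompose $R_n$ along the two eigenvectors of $L_n$ restricted to the zero-sum hyperplane and show that the $-1$ eigenvalue contributes only a sign flip (leaving that coefficient's magnitude unchanged), while the complementary direction contracts strictly, preserving $\|R_n\|$ up to a controllable error and delivering the required supermartingale structure.
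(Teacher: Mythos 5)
Your derivation of the recursion $R_{n+1}=L_nR_n+\xi_{n+1}$ is essentially the paper's own decomposition (Proposition~\ref{p_split} and Lemma~\ref{l_uw}, with $\rho_{n+1}$ in place of its limit $\rho$), and your treatment of the noise is a workable alternative to the paper's: they show $\Vert U_{n+1}\Vert=O(\nu^{-n})$ almost surely via conditional Chebyshev/Doob estimates (Proposition~\ref{p_error}), whereas you would use conditional $L^2$ bounds of order $\Vert N_n\Vert^{-1/2}$, summable by Kesten--Stigum; either route controls the stochastic part (note, though, that $\xi_{n+1}$ is not exactly conditionally centred -- the $\omega_{n+1}-\pi_n$ and $\rho_{n+1}$-versus-$\rho$ terms carry a small bias -- so you would in any case be in the ``almost supermartingale'' regime rather than the martingale one).

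The genuine gap is in the step you yourself flag as the hardest: passing from spectral information about $L_n$ to norm control of $\Vert L_nR_n\Vert$. Your proposed eigen-decomposition of $L_n$ does not close it. The eigenbasis of $L_n$ on the zero-sum plane depends on $\Theta_n$, and the change-of-basis error between steps $n$ and $n+1$ is of order $\Vert\Theta_{n+1}-\Theta_n\Vert$, which by the companion recursion~\eqref{iter_incr} is itself of order $\Vert R_n\Vert$ -- not summable a priori (indeed, whether $\Vert R_n\Vert\to 0$ is exactly what is at stake). Moreover the eigenvectors are discontinuous at the vertices and the eigenbasis degenerates where the two eigenvalues approach each other, and for non-normal matrices with varying bases a spectral radius bound $\le 1$ gives no control whatsoever on products $L_nL_{n-1}\cdots L_m$. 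The missing observation, which is the paper's Lemma~\ref{l_norm}, is that no spectral analysis is needed here: $M_p$ has nonnegative entries with columns summing to one, so it is non-expanding for the $\ell_1$-norm (on all of $\R^3$, hence on $\mathcal{L}$), giving the genuine operator-norm bound $\vertiii{L_n}\le(1-\tfrac{\alpha_n}{2})+\tfrac{\alpha_n}{2}=1$. With that, $\Vert R_{n+1}\Vert\le\Vert R_n\Vert+\Vert\xi_{n+1}\Vert$, and almost sure summability of $\Vert\xi_{n+1}\Vert$ (which your population-growth estimates already deliver) immediately forces $\Vert R_n\Vert$ to converge -- no Robbins--Siegmund and no eigenvector bookkeeping required. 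This is exactly how the paper concludes: it iterates the recursion, notes that the unperturbed part $\hat v_n=P_{n,m}v_m$ has non-increasing norm because $\vertiii{P_{n,i}}\le 1$, and absorbs the perturbation into an error of size $O(\nu^{-m})$.
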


The next theorem
states that $(\Theta_n,\pi_n)$ either converges to an equilibrium or approaches an equilibrium two-cycle of amplitude $\ell$. 
 
\begin{theorem} 
\label{th_conv}
As $n\to\infty$,  $\Theta_n$ converges almost surely to a $\Sigma$-valued random variable $\Theta$, 
and one of the following occurs: 

\begin{enumerate}

\item $\Theta\in \Sigma\setminus \partial\Sigma$, $\ell=0$ and $(\Theta_n, \pi_n)\to (\Theta, q_{[\Theta]})$
\hfill internal equilibrium

\item $\Theta\in \partial\Sigma\setminus V$, $\ell=0$ and $(\Theta_n, \pi_n)\to (\Theta, q_{[\Theta}])$
\hfill edge equilibrium

\item $\Theta\in \partial\Sigma\setminus V$, $\ell>0$ and $(\Theta_n, \pi_n)$ approaches $\{(\Theta, q_{[\Theta]}\pm \ell e_{-1}(\Theta))\}$
\hfill edge equilibrium two-cycle

\item $\Theta\in V$, $\, \ \ \ \ \ \ell=0$ and $(\Theta_n, \pi_n)\to (\Theta, q_{[\Theta]})$
\hfill vertex equilibrium

\item $\Theta\in V$, $\, \ \ \ \ \ \ell>0$ and $(\Theta_n, \pi_n)$ approaches $\{(\Theta, q_{[\Theta]}\pm \ell e_{-1}(\Theta))\}$
\hfill vertex equilibrium two-cycle

\end{enumerate}

\end{theorem}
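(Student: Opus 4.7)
The plan is to combine stochastic-approximation estimates with the spectral structure of $M_p$, leveraging Lemma~\ref{lemma_l}. Since $\P(Z=0)=0$ and $\E(Z^2)<\infty$, the Kesten--Stigum theorem applied to the total population yields $\|N_n\|/\mu^n\to W>0$ almost surely, so $\|T_n\|\sim W\mu^{n+1}/(\mu-1)$ and $\alpha_n:=\|N_{n+1}\|/\|T_{n+1}\|\to(\mu-1)/\mu$. Computing $\Theta_{n+1}-\Theta_n$ from~\eqref{def_t} and~\eqref{def_n}, I decompose
\begin{align*}
\Theta_{n+1}-\Theta_n = \alpha_n\bigl(F(\Theta_n,\pi_n)-\Theta_n\bigr)+D_{n+1},
\end{align*}
where $F(\Theta,\pi)$ is the deterministic drift induced by the transition probabilities~\eqref{p} and $D_{n+1}$ is a martingale increment whose conditional variance is $O(\mu^{-n})$. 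Summability of these variances makes $\sum_n D_{n+1}$ an $L^2$-convergent series, so the noise is an almost-surely summable perturbation.

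A short calculation using $M_p q_{[p]}=q_{[p]}$ gives $F(\Theta,q_{[\Theta]})=\Theta$, so the drift vanishes at every $(\Theta,q_{[\Theta]})$. Consequently, on the event $\{\ell=0\}$ from Lemma~\ref{lemma_l} the drift tends to zero, a Doob/Robbins--Siegmund argument applied to the display above yields $\Theta_n\to\Theta$ almost surely, and $\pi_n-q_{[\Theta_n]}\to 0$ completes the identification of $(\Theta_n,\pi_n)\to(\Theta,q_{[\Theta]})$. The location of $\Theta$ in $\Sigma\setminus\partial\Sigma$, in $\partial\Sigma\setminus V$, or in $V$ then distinguishes cases (1), (2) and the $\ell=0$ part of (4).

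For $\ell>0$ I would first analyse $\pi_n$ via $\pi_{n+1}\approx M_{\Theta_n}\pi_n$. Writing $r_n=\pi_n-q_{[\Theta_n]}$, the approximate recursion $r_{n+1}\approx M_{\Theta_n}r_n$ contracts any component of $r_n$ along an eigenvector whose eigenvalue has modulus strictly less than $1$; since $\|r_n\|\to\ell>0$, $r_n$ must asymptotically align with an eigenspace of $M_{\Theta_n}$ of an eigenvalue of modulus $1$ distinct from $1$. This forces $\Theta_n$ to approach $\partial\Sigma$, where $-1$ is an eigenvalue with eigenvector $e_{-1}(\Theta_n)$, and yields $r_n\approx\pm\ell\,e_{-1}(\Theta_n)$ with alternating signs. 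The alternation makes the single-step drifts of $\Theta_n$ nearly opposite at consecutive times, so a pairing argument on $\Theta_{2n}$ and $\Theta_{2n+1}$ gives summable differences and a common almost-sure limit $\Theta\in\partial\Sigma$, establishing cases (3) and (5).

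The main technical obstacle is the $\ell>0$ regime: the single-step drift of $\Theta_n$ does not vanish, so convergence relies both on the spectral alignment of $r_n$ with $e_{-1}(\Theta_n)$ and on cancellation along the two-cycle. Case (5) adds a further subtlety because $M_p$ degenerates at $V$ and the global map $p\mapsto e_{-1}(p)$ is discontinuous there, so the proof must track which edge of $\partial\Sigma$ the sequence $\Theta_n$ follows into its vertex limit in order to identify the correct $e_{-1}(\Theta)$.
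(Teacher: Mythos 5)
Your reduction to a perturbed dynamical system with almost surely summable noise is sound and parallels the paper's Propositions~\ref{p_split} and~\ref{p_error} (modulo the fact that your $D_{n+1}$ is not exactly a martingale increment, since $\rho_{n+1}$ and $\Vert N_{n+1}\Vert$ enter nonlinearly; this is repairable). The genuine gap is in the $\ell=0$ regime, which is the heart of the theorem: you argue that since the drift vanishes on the equilibria $(p,q_{[p]})$ and $\pi_n-q_{[\Theta_n]}\to 0$, a ``Doob/Robbins--Siegmund argument'' gives $\Theta_n\to\Theta$. But the equilibria form a continuum (every $p\in\Sigma$ gives one), so drift tending to zero plus summable noise only yields $\Theta_{n+1}-\Theta_n\to 0$; it does not make the increments summable, and $\Theta_n$ could a priori creep indefinitely along this continuum without converging. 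Robbins--Siegmund requires a nonnegative supermartingale/Lyapunov structure that you have not exhibited, and none is available off the shelf here. The paper closes exactly this gap in three separate steps: away from $\partial\Sigma$ the restriction of $M_p$ to $\mathcal{L}$ is a strict contraction (Lemma~\ref{l_norm}), so $\Vert v_n\Vert$ decays geometrically and the increments in~\eqref{iter_incr} are summable (Proposition~\ref{p_eq}); the mixed case $\chi_\star=0<\chi^\star$ with $\ell=0$ is excluded by a quantitative excursion argument (Proposition~\ref{p_ddd}); and near the boundary, where the contraction degenerates, convergence of $\Theta_n$ rests on the coordinate analysis of Lemma~\ref{double}, whose conclusion $\sum_n z_n|\beta_n|<\infty$ is precisely what makes the increments summable. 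None of this follows from ``the drift tends to zero''.

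The $\ell>0$ sketch has the right heuristics (alignment of $v_n$ with $e_{-1}$, forcing $\Theta_n$ towards $\partial\Sigma$ as in Proposition~\ref{p_dpos_lpos}, and near-cancellation of consecutive drifts), but the assertion that ``a pairing argument gives summable differences'' is again the missing proof: in the paper summability comes from the trapping Lemma~\ref{pall}, which shows that once the orbit is close to an edge two-cycle it stays there, that the boundary distance decays geometrically (essentially $z_{n+2}\approx z_n(1-\rho^2\ell^2)$), and that the $e_0$-coordinate $\alpha_n$ decays as well; and before that one must know the orbit actually has a limit point of this form and does not oscillate between different edges or vertices, which is the content of Lemma~\ref{limit_points} together with Lemma~\ref{l_versub} and the deterministic boundary-orbit analysis of Section~\ref{s_boundary}. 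Your proposal addresses none of these. Finally, your concern about case (5) is misplaced: on $\mathcal{D}$ the paper obtains (4)--(5) directly from~\eqref{iter_lim_th}, since $\Theta_n^{\ssup i}\to 1$ forces $\pi_n^{\ssup i}\to 0$, which automatically aligns $v_n$ with $e_{-1}(\Theta)$, the vector parallel to the opposite edge; no tracking of the approach direction is needed.
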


We see that if $\Theta$ is an internal point then all edges get a non-negligible proportion of traversals, and the population masses $\pi_n$
are forced to converge. If $\Theta$ lies on the boundary (excluding the vertices), 
then the traversals of exactly one edge become negligible. 
The masses $\pi_n$ will converge 
if the population is balanced roughly equally between the vertices adjacent to the 
unpopular edge and the remaining vertex; otherwise $\pi_n$ will fluctuate between two states with the amplitude reflecting the size of the imbalance. Finally, if $\Theta$ is a vertex then the overwhelming number of traversals occurs over a single edge. 
In that case $q_{[\Theta]}\in \Sigma\setminus V$ belongs to an edge and $e_{-1}(\Theta)$ is parallel to that edge, meaning that  
the mass of the vertex between the unpopular edges becomes negligible; the remaining two masses either converge to a half each, or fluctuate 
between two states in the presence of imbalance. 
\smallskip

We are naturally curious to find out which of the five scenarios  actually happen. Borrowing from the terminology established for balls and bins models \cite{DriFriMitz02, Sid18}, we call the events 
\begin{align*}
\mathcal{D}_{0}=\{\Theta\in \Sigma\setminus \partial\Sigma\}
\qquad\qquad 
\mathcal{D}_{p}=\{\Theta\in \partial\Sigma\setminus V\},
\qquad\qquad 
\mathcal{D}=\{\Theta\in V\},
\end{align*}
\emph{no dominance}, \emph{partial dominance} and \emph{full dominance}, respectively.

\begin{theorem} 
\label{th_pos}
$\P(\mathcal{D}_{0})>0$ and $\P(\mathcal{D}_{p})>0$. 
\end{theorem}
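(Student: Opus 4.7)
The strategy for both statements is to locate equilibria $(p^*, q_{[p^*]})$ of the expected one-step update of $(\Theta_n,\pi_n)$ --- one in the interior of $\Sigma$ for $\mathcal{D}_0$ and one on $\partial\Sigma\setminus V$ for $\mathcal{D}_p$ --- and to show, via a local stability plus concentration argument, that with positive probability the chain stays for all $n$ in a small neighborhood of the region containing $p^*$. By Theorem~\ref{th_conv}, on such an event $\Theta$ is forced to lie in the required region.

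For $\P(\mathcal{D}_0)>0$ I would take $p^*=(\tfrac13,\tfrac13,\tfrac13)$, which by the permutation symmetry of $M_{p^*}$ is a fixed point with $q_{[p^*]}=p^*$, and start the chain with the symmetric data $T_0=(c,c,c)$ and $N_0=(m,m,m)$ for some large $m$. A direct Jacobian computation of the expected one-step map at $(p^*,p^*)$ should yield eigenvalues strictly inside the unit disc, so that $p^*$ is exponentially stable for the deterministic iteration. Since the Binomials driving one step of the process involve roughly $\Vert N_n\Vert$ independent trials, the conditional variance of $\Theta_{n+1}$ given $\mathcal{F}_n^{\star}$ is $O(\Vert N_n\Vert^{-1})=O(\mu^{-n})$; an Azuma-type deviation bound combined with the geometric contraction of the drift then yields
\begin{equation*}
\P\bigl(\Vert \Theta_n-p^*\Vert<\eps\text{ for all }n\geq 0\bigr)>0
\end{equation*}
provided $\eps$ is small and $m$ is large enough, and hence $\P(\mathcal{D}_0)>0$.

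For $\P(\mathcal{D}_p)>0$ I would apply the same template at a boundary equilibrium $p^*=(\tfrac12,\tfrac12,0)$ with $q_{[p^*]}=(\tfrac14,\tfrac14,\tfrac12)$, starting from $T_0=(1,1,\eta)$ with small $\eta>0$ and $N_0$ proportional to $q_{[p^*]}$. The new feature is that $p^*$ lies on a one-parameter family of boundary equilibria $\{(x,1-x,0):0\leq x\leq 1\}$, so that the coordinate $\Theta_n^{\ssup 3}$ evolves as an approximate non-negative martingale rather than being contracted, while the motion along the line $\partial\Sigma\cap\{z=0\}$ is neutral in the linearisation. Combining Doob's martingale convergence theorem for $(\Theta_n^{\ssup 3})$ with an inductive concentration argument for the pair $(\Theta_n^{\ssup 1},\Theta_n^{\ssup 2})$, plus a symmetric coupling exploiting the $1\leftrightarrow 2$ invariance of the initial data to keep both coordinates bounded away from $0$, should give a positive-probability event on which $\Theta_n^{\ssup 3}\to 0$ and $\Theta_n^{\ssup 1},\Theta_n^{\ssup 2}$ are bounded below by a positive constant, placing $\Theta$ in $\partial\Sigma\setminus V$.

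The main obstacle is the non-classical nature of the step size. Unlike in Robbins--Monro stochastic approximation, the effective step $\Vert N_{n+1}\Vert/\Vert T_{n+1}\Vert$ tends to the positive constant $(\mu-1)/\mu$ rather than to $0$, so the decay of fluctuations must come entirely from the law of large numbers over the exponentially many Binomial decisions at each step, giving noise of size $\mu^{-n/2}$. In the interior case this noise is readily tamed by the contracting drift, but in the boundary case the drift along the line of equilibria is only neutral and only the transverse direction is controlled; the delicate point is then to check that the neutral tangential motion does not eventually reach a vertex, which would instead place $\Theta$ in the full-dominance event $\mathcal{D}$. I expect this to rely on a geometric-series estimate for the accumulated transverse noise together with the symmetric coupling mentioned above.
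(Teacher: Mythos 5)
Your interior argument is broadly in the spirit of the paper's Proposition~\ref{p_pos1}, but two points as stated do not hold. First, $(\sigma,\sigma)$ with $\sigma=(\tfrac13,\tfrac13,\tfrac13)$ is not an exponentially stable fixed point: \emph{every} pair $(p,q_{[p]})$ with $p$ interior is a fixed point of the one-step map, so the linearisation at $(\sigma,\sigma)$ has eigenvalue $1$ (with multiplicity two) along this surface of equilibria --- consistently with the fact that $\Theta$ converges to a \emph{random} interior point, not to $\sigma$. What is true, and what the paper uses, is that the transverse coordinate $v_n=\pi_n-q_{[\Theta_n]}$ is contracted (Lemma~\ref{l_norm}), so the increments of $\Theta_n$ form a summable geometric series and $\Theta_n$ stays in a neighbourhood of $\sigma$ without converging to it; with that correction your interior argument can be repaired. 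Second, the theorem concerns the given, arbitrary initial data $N_0,T_0$, so you are not free to choose $T_0=(c,c,c)$ and $N_0=(m,m,m)$ with $m$ large; the paper instead conditions on the positive-probability event that, up to a finite time $m$, every vertex sends as nearly as possible half of its offspring across each adjacent edge, which drives $(\Theta_m,\pi_m)$ close to $(\sigma,\sigma)$ from any start and simultaneously makes the population large.

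The serious gap is in the boundary case. You aim at the edge equilibrium with balanced masses, $q_{[p^*]}=(\tfrac14,\tfrac14,\tfrac12)$, and hope that $\Theta_n^{\ssup 3}$, being an approximate nonnegative martingale, tends to $0$ with positive probability. But nothing drives it to zero: near such a state the dynamics gives $\Theta_{n+1}^{\ssup 3}\approx \Theta_n^{\ssup 3}(1+\rho\beta_n)$ with $\beta_n\approx 0$, and the random perturbations are of size $O(\nu^{-n})$, hence summable; Doob's theorem then only yields convergence of $\Theta_n^{\ssup 3}$ to a limit which, with neutral drift and summable noise, stays close to its positive starting value --- if anything this construction produces an interior limit, i.e.\ contributes to $\mathcal{D}_0$ rather than $\mathcal{D}_p$. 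The regime you are targeting is scenario (2) of Theorem~\ref{th_conv} (boundary limit with $\ell=0$), which the paper conjectures has probability zero. The mechanism that actually yields partial dominance is scenario (3): one must create a macroscopic imbalance $\ell=\theta>0$, so that $\beta_n$ oscillates near $\pm\theta$ and over two steps $\Theta_n^{\ssup 3}$ is multiplied by roughly $(1+\rho\theta)(1-\rho\theta)=1-\rho^2\theta^2<1$, forcing geometric decay to the edge; this is the content of Lemma~\ref{pall}. The paper's Proposition~\ref{p_pos2} manufactures this state by forcing, on a positive-probability event, the third edge not to be traversed at all for a stretch of time while vertex three splits its offspring evenly, landing $(\Theta_m,\pi_m)$ near the two-cycle $(\hat\sigma,\, q_{[\hat\sigma]}\pm\tfrac13 e_{-1}(\hat\sigma))$ with $\hat\sigma=(\tfrac12,\tfrac12,0)$, after which the confinement estimates of Lemma~\ref{pall} keep it there forever. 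Without this (or some comparable) mechanism generating $\ell>0$, your argument for $\P(\mathcal{D}_p)>0$ does not go through.
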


We actually prove a slightly stronger result, namely, that scenarios (1) and (3) in Theorem~\ref{th_conv} occur with positive probability. 
We also conjecture that the remaining scenarios (2), (4) and (5) almost surely do not occur, and that in particular $\P(\mathcal{D})=0$.  
%\begin{con} $P(\mathcal{D}_f)=0$.
%\end{con}
\smallskip

%\begin{center}
%\begin{figure}
%\includegraphics[width=70mm]{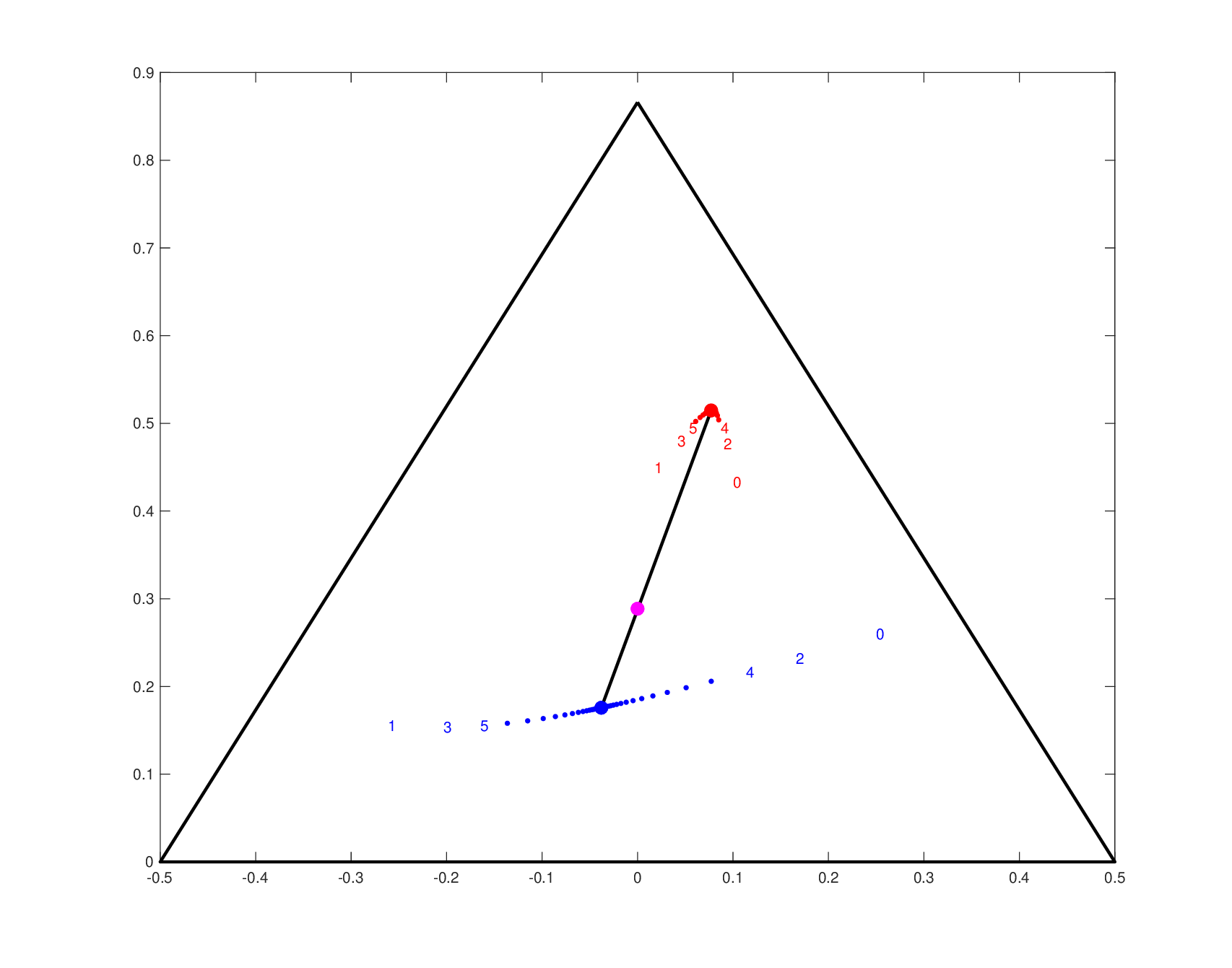}
%\caption{Test 1}
%\includegraphics[width=70mm]{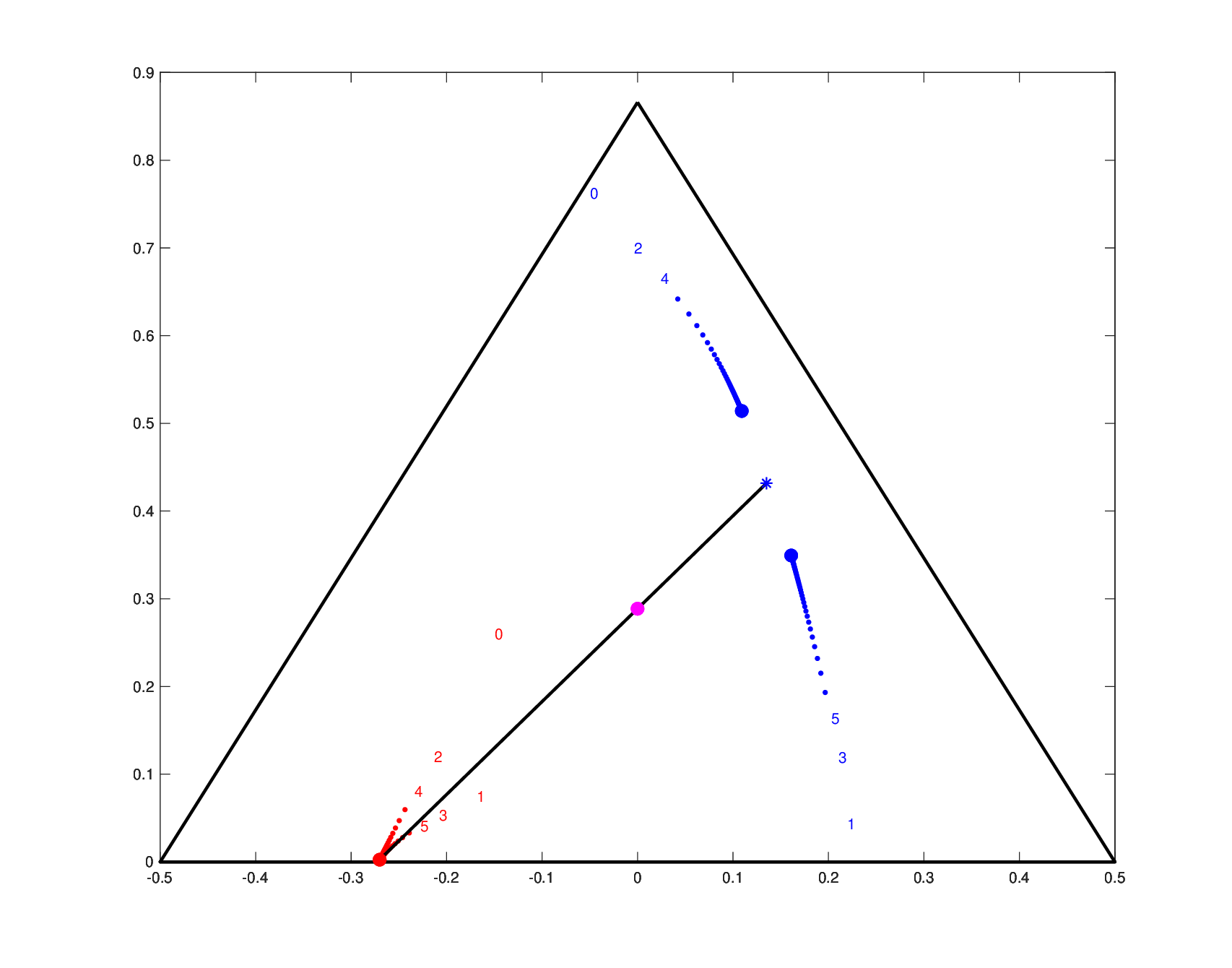}
%\end{figure}
%\end{center}

%The two pictures below illustrate 

Further insight into the behaviour of $(\Theta_n, \pi_n)$ will be 
provided in Figure~\ref{fig:twoimages} in Section~\ref{further}.
\smallskip

Finally, we are interested in examining the extent of how unpopular the edges can be, and whether the traversals can cease to happen 
completely.  With that in mind, we denote by $J$ be the number of indices $i$ such that $T_n^{\ssup i}$ is bounded and, again following the balls and bins jargon, call the events 
\begin{align*}
\mathcal{M}_{0}=\{J=0\}
\qquad\qquad 
\mathcal{M}_{p}=\{J=1\},
\qquad\qquad 
\mathcal{M}=\{J=2\},
\end{align*}
\emph{no monopoly}, \emph{partial monopoly} and \emph{full monopoly}, respectively. The following theorem states that 
our model is not monopolistic in any way. 

\begin{theorem} 
\label{th_mon}
$\P(\mathcal{M}_0)=1$ and $\P(\mathcal{M}_p)=\P(\mathcal{M})=0$. Moreover, almost surely $T_n^{\ssup i}$ grows 
super-polynomially for each $i$. 
\end{theorem}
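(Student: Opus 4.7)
The plan is to prove the super-polynomial growth of each $T^{\ssup i}_n$; since $T^{\ssup i}_n$ is monotone non-decreasing, this forces $J=0$ almost surely and hence the three probability identities $\P(\M_0)=1$ and $\P(\M_p)=\P(\M)=0$. The first ingredient is the Kesten--Stigum theorem applied to $(\|N_n\|)$, which is a supercritical Galton--Watson process (mean $\mu>1$, $\P(Z=0)=0$ so no extinction, $\E(Z^2)<\infty$): it gives $\|N_n\|/\mu^n\to W$ almost surely for a strictly positive $W$, and summing yields $\|T_n\|\sim \tfrac{\mu W}{\mu-1}\mu^n$. Combining with Theorem~\ref{th_conv}, on the event $\{\Theta^{\ssup i}>0\}$ we immediately obtain $T^{\ssup i}_n=\Theta^{\ssup i}_n\|T_n\|\sim c\mu^n$, which is certainly super-polynomial.

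The delicate case is $\Theta^{\ssup i}=0$, corresponding to scenarios (2)--(5) with $i$ being a vanishing edge. Here I would start from the identity
\[
\E\bigl[T^{\ssup i}_{n+1}\bigm|\mathcal{F}_n\bigr]
= T^{\ssup i}_n\bigl(1+\mu\alpha_n\bigr),\qquad \alpha_n:=\frac{N^{\ssup{i\oplus 1}}_n}{T^{\ssup i}_n+T^{\ssup{i\ominus 1}}_n}+\frac{N^{\ssup{i\ominus 1}}_n}{T^{\ssup i}_n+T^{\ssup{i\oplus 1}}_n},
\]
and use Theorem~\ref{th_conv} together with the asymptotics above to show that $\alpha_n$ is bounded below by a positive constant $c$ eventually almost surely (possibly in a two-step-averaged sense on the two-cycle events). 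In scenarios (2)--(3) both $T^{\ssup{i\pm 1}}_n$ are of order $\mu^n$ and the $\pi$-masses at the vertices $i\pm 1$, described by $q_{[\Theta]}\pm\ell e_{-1}(\Theta)$, remain positive in at least one of the two phases; in scenarios (4)--(5) the dominant vertex $\Theta=v_j$ equals either $i\oplus 1$ or $i\ominus 1$, and the remaining neighbour of edge $i$ carries a positive proportion $1/2$ of the population.

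Given $\alpha_n\geq c>0$ eventually almost surely, I would upgrade to an almost-sure lower bound on $T^{\ssup i}_n$ via the nonnegative martingale $M_n:=T^{\ssup i}_n\prod_{k<n}(1+\mu\alpha_k)^{-1}$; since the conditional variance of the binomial increments of $T^{\ssup i}_n$ is bounded by their conditional mean $\mu T^{\ssup i}_n\alpha_n$, a direct computation yields $\Var(M_{n+1}-M_n\mid\mathcal{F}_n)\leq \mu T^{\ssup i}_n\alpha_n\prod_{k\leq n}(1+\mu\alpha_k)^{-2}$, which sums to a finite value almost surely, so $M_n$ converges almost surely and in $L^2$ to some $M_\infty$. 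The main obstacle I anticipate is the bootstrap in the small-$T^{\ssup i}$ regime: while $T^{\ssup i}_n\geq T^{\ssup i}_0>0$, the multiplicative increments there are driven by Binomials of only modest mean, so ruling out $\{M_\infty=0\}$ requires combining the $L^2$ martingale with a Kakutani-type second-moment or direct law-of-large-numbers argument; once $T^{\ssup i}_n$ has crossed any fixed threshold, binomial concentration together with $\alpha_n\geq c$ delivers $T^{\ssup i}_n\gtrsim(1+\mu c)^n$ almost surely, which is certainly super-polynomial.
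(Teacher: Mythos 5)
Your reduction of the probability statements to super-polynomial growth is the same as the paper's, and the easy case $\Theta^{\ssup i}>0$ (Kesten--Stigum for $\Vert N_n\Vert$, which is essentially Lemma~\ref{l_n8}, plus summation) is fine. The conditional-mean identity $\E[T^{\ssup i}_{n+1}\mid\mathcal{F}_n]=T^{\ssup i}_n(1+\mu\alpha_n)$ is correct, and the claim that $\alpha_n$ is eventually bounded below (in a two-step-averaged sense in the two-cycle scenarios, where for $\ell=1$ almost all mass can sit at a single vertex in alternate steps) is plausible, though you only sketch it and it leans on Theorem~\ref{th_conv}; two small inaccuracies are that the conditional variance of the increment also carries a branching term of the form $P_n^2 N_n\mathrm{Var}(Z)$, so ``variance $\le$ mean'' should be ``variance $\le$ constant times mean'', and that a.s.\ summability of conditional variances gives a.s.\ convergence but not $L^2$ convergence (which you do not need anyway).

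The genuine gap is the one you yourself flag and do not close: positivity of $M_\infty$. In the only nontrivial case $\Theta^{\ssup i}=0$, convergence of the nonnegative martingale $M_n=T^{\ssup i}_n\prod_{k<n}(1+\mu\alpha_k)^{-1}$ yields only $T^{\ssup i}_n=O\big(\prod_{k<n}(1+\mu\alpha_k)\big)$; without ruling out $M_\infty=0$ you get no lower bound beyond $T^{\ssup i}_n\ge T^{\ssup i}_0$, so the theorem is not proved. And this is exactly the hard point: while $T^{\ssup i}_n$ is of order one, the increment is a sum of binomials with huge numbers of trials and tiny success probabilities, hence conditionally Poisson-like with mean of order one, and it is zero with conditional probability bounded away from zero at every step; turning the conditional-mean recursion into actual a.s.\ exponential growth is a Kesten--Stigum/$L\log L$-type statement for a time-inhomogeneous, configuration-dependent branching recursion, and neither your $L^2$ computation nor a generic appeal to ``Kakutani-type'' arguments supplies it. Note also that you are aiming at a statement (exponential growth at rate $1+\mu c$) strictly stronger than what the theorem asserts. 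The paper takes a different and deliberately weaker route that avoids any positivity-of-limit issue: it decomposes $T^{\ssup i}_{n+1}=T^{\ssup i}_0+M_{n+1}+Y_{n+1}$ with $Y_n$ increasing and previsible and $\langle M\rangle_n\le Y_n$, so by the martingale strong law $T^{\ssup i}_n$ grows at the rate of the previsible part $Y_n$; then a bootstrap gives $Y_n\gtrsim n^{m}$ for every $m$: the transition probabilities towards edge $i$ are at least of order (current lower bound on $T^{\ssup i}_k$)$/\Vert N_k\Vert$ by Lemma~\ref{l_rho8}, and a deterministic dichotomy (either at least half the population is already at the endpoints of edge $i$, or it is at vertex $i$ and moves there in the next step, via Lemma~\ref{l_n11}) replaces your scenario analysis, so Theorem~\ref{th_conv} is not needed at all. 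To salvage your approach you must actually prove $M_\infty>0$ a.s.\ on $\{\Theta^{\ssup i}=0\}$; otherwise, retreat to the paper's previsible-part bootstrap, which delivers the super-polynomial growth the theorem requires.
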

\medskip

\subsection{Related work and motivation}

Reinforced random processes have seen significant progress over the past thirty years (see~\cite{Pem07} for a survey). Here, we review recent advances on ERRWs that motivate our work (see also~\cite{MerRoll06, Koz14} for detailed surveys). 
\smallskip

In an ERRW a particle moves on the vertices of a weighted graph, selecting a neighbour at random with probability proportional to the weight of the incident edge. 
Coppersmith and Diaconis were the first to study 
a \emph{linear} ERRW (abbreviated LERRW), where the weight of the edge increases by one with each traversal~\cite{CopDiac87,Diac88'}. They showed that 
on a \emph{finite graph}
the process exhibits almost sure recurrence and convergence of the normalised edge occupation vector to a random limit with an explicitly known density.
\smallskip

A substantial amount of study has been spent on almost sure recurrence and transience criteria for LERRWs on \emph{infinite graphs}. An important feature of the LERRW is that it is a \emph{partially exchangeable} process and hence 
a mixture of Markov chains~\cite{Roll03,MerRoll07}.
After early results~\cite{Pem88',Roll06}, 
Sabot and Tarr\`{e}s proved that recurrence holds in any dimension $d$ for $\mathbb{Z}^d$, provided the initial weights are small enough~\cite{SabTar15}. Notably, they relied on a connection with certain quantum models that allowed to exploit partial exchangeability\footnote{Recurrence can also be proven without relying on this connection~\cite{AngCrawKoz14}.}.
\smallskip

Stronger than linear reinforcement for random walks (SERRW) has been studied in the models, where transition probabilities are proportional to a certain positive function $f$ of the number of edge crossings. In~\cite{Dav90} Davis showed that if the function $f$ grows fast enough, such that
\begin{equation}
\sum_{n=1}^{\infty} \frac{1}{f(n)}<\infty,\tag*{(H)}\label{H}
\end{equation} 
then the walk on $\mathbb{Z}$ almost surely gets eventually trapped in a \emph{single edge}; if the series diverges the walk is recurrent almost surely. 
Soon afterwards in~\cite{Sel94}
Sellke proved the single edge localisation on $\mathbb{Z}^d$. 
Although his argument extended to any bipartite graph of bounded degree, it failed on graphs with odd cycles, including the triangle graph as 
a crucial case. He conjectured, however, that localisation should still hold, and this became known as \emph{Sellke's conjecture}. Significant progress towards its resolution was made by Limic, who proved it for 
$f(n)=n^{\alpha}$, with the \emph{feedback} parameter $\alpha>1$, on any graph, see~\cite{Lim03}. Exploiting martingales techniques combined with stochastic approximation, Limic and Tarr\`{e}s then settled the conjecture for any $f$ satisfying additional minor technical conditions, later essentially removed by Cotar and Thacker~\cite{CotThac17}.
\smallskip

The effect of branching on ERRWs has not yet been investigated. While stochastic approximation was effective for SERRWs, such continuous-time methods do not seem applicable in presence of branching, which places 
our model somewhere in between LERRWs and SERRWs
due to the population growth. On the other hand, the branching makes a combinatorial analysis, effective for LERRWs, rather difficult. 
This direction requires new ways of implementing discrete-time dynamical systems techniques into probabilistic work. 
\smallskip

Like Sellke and Limic, we consider the triangle graph, although our approach appears to provide key insights for future work on more general graphs, including adding feedback to the model. 
For star graphs, our branching model coincides with certain generalised P\'{o}lya urn models known as \emph{balls and bins models}. 
Notably, those models exhibit neither partial nor full dominance, see~\cite{Giam23} and~\cite{Sid18}. This contrasts with our model on the triangle, where partial dominance emerges as a non-trivial regime occurring with positive probability.

\medskip

\subsection{Heuristics and an overview of the paper}
\label{further}
Denote 
\begin{align*}
\rho_n=\frac{\Vert N_{n}\Vert }{\Vert T_{n}\Vert }
\qquad\text{and}\qquad \rho=\lim_{n\to\infty}\rho_n=1-\mu^{-1},
\end{align*}
see Lemma~\ref{l_rho8}. 
The first key observation is that the dynamics if $(\Theta_n,\pi_n)$ is given by 
\begin{align}
\label{iter_lim_pi}
\pi_{n+1}&=M_{\Theta_n}\pi_n+R_{n+1},\\
\Theta_{n+1}&=(1-\rho)\Theta_n+\rho(\one-\pi_n-\pi_{n+1})+S_{n+1},
\label{iter_lim_th}
\end{align}
consisting of a deterministic dynamical system perturbed by some random fluctuations $(R_{n})$, $(S_{n})$, as shown in 
Proposition~\ref{p_split}. We show in Proposition~\ref{p_error} that the random fluctuations are small, and hence 
the behaviour of $(\Theta_n,\pi_n)$ can be largely inferred from the behaviour of the deterministic dynamical system
\begin{align}
 \label{dynsys0}
  (p_{n+1},q_{n+1})=\Phi(p_n,q_n).
 \end{align}
on $(\Sigma\setminus V)\times \Sigma$ driven by the mapping
 \begin{align*}
 \Phi(p,q)=\big((1-\rho)p+\rho(\one-q-M_p q), M_pq\big).
 %, \qquad p\in \Sigma\setminus V, \ q\in\Sigma.
 \end{align*}

Figure~\ref{fig:twoimages} illustrates the typical dynamics of the sequences $p_n$ %(shown in red) 
and $q_n$ %(shown in blue) 
on the simplex for Scenarios (1) and (3), respectively.
\smallskip

\begin{figure}[h]


    \centering
    \begin{subfigure}[b]{0.48\textwidth}
        \centering
        \includegraphics[width=70mm]{int_eq14.eps}
        \caption{Scenario (1)}
        %\label{fig:image1}
    \end{subfigure}
    \hfill
    \begin{subfigure}[b]{0.48\textwidth}
        \centering
        \includegraphics[width=70mm]{edge14.eps}
        \caption{Scenario (3)}
        %\label{fig:image2}
    \end{subfigure}
    \caption{Typical dynamics of $p_n$ (in red) and $q_n$ (in blue) on the simplex (centre pink).}
    \label{fig:twoimages}
\end{figure}
 
Proving convergence in Theorem~\ref{th_conv} is tricky, but once it is established it is not hard to explain the shape of the limits in all five scenarios using the following lemma, which will be proved in Section~\ref{s_edge}.

\begin{lemma} 
\label{la0}
Let
\begin{align*}
\mathcal{L}=\{(u,v,w)\in \R^3: u+v+w=0\}. 
\end{align*}

For any $p\in \Sigma\setminus V$, the space $\mathcal{L}$ is invariant under $M_p$. $M_p$ has an eigenvalue $1$ with an eigenvector $q_{[p]}$ and two further eigenvalues 
$\lambda_{-1}(p),\lambda_0(p)\in [-1,0]$ with normalised eigenvectors $e_{-1}(p), e_0(p)\in \mathcal{L}$. The equalities 
$\lambda_{-1}(p)=-1$ and $\lambda_{0}(p)=0$ occur if and only if $p\in \partial \Sigma\setminus V$. 
\end{lemma}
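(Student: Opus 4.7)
The plan is to reduce the statement to elementary linear algebra by exploiting the fact that $M_p$ is column-stochastic. First I would check directly from the definition that every column of $M_p$ sums to $1$ (for instance the first column gives $0+\tfrac{z}{y+z}+\tfrac{y}{y+z}=1$), so $\one^{\top} M_p = \one^{\top}$. This immediately yields $M_p$-invariance of $\mathcal{L}$: if $\one^{\top}v=0$ then $\one^{\top}(M_p v)=\one^{\top} v = 0$. Combined with the identity $M_p q_{[p]}=q_{[p]}$ already recorded in the paper and the fact that $\one^{\top}q_{[p]} = 1 \neq 0$, this produces the direct-sum decomposition $\mathbb{R}^3 = \mathbb{R}\, q_{[p]}\oplus \mathcal{L}$ into $M_p$-invariant subspaces, so the remaining two eigenvalues come from $M_p|_{\mathcal{L}}$ and their eigenvectors automatically lie in $\mathcal{L}$.

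Next I would identify these two eigenvalues using only the trace and determinant of $M_p$. Since $\mathrm{tr}(M_p)=0$ and $\lambda=1$ is already accounted for, the other two eigenvalues satisfy $\lambda_{+}+\lambda_{-}=-1$ and $\lambda_{+}\lambda_{-}=\det M_p$. A short cofactor expansion along the first row gives
$$\det M_p = \frac{2xyz}{(x+y)(y+z)(z+x)},$$
so these eigenvalues are the two roots of $\lambda^2+\lambda+\det M_p=0$. Applying AM-GM to each factor in $(x+y)(y+z)(z+x)\ge 8xyz$ shows $\det M_p\in[0,\tfrac14]$ on $\Sigma\setminus V$, so the discriminant $1-4\det M_p$ is non-negative and both roots are real and lie in $[-1,0]$; I label the smaller one $\lambda_{-1}(p)$ and the larger one $\lambda_0(p)$.

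The endpoint statement is then immediate: because the sum of the two roots is $-1$ and their product is $\det M_p$, the equalities $\lambda_{-1}(p)=-1$ and $\lambda_0(p)=0$ are attained simultaneously, and precisely when $\det M_p = 0$, i.e.\ when $xyz=0$, which on $\Sigma\setminus V$ is exactly the condition $p\in\partial\Sigma\setminus V$. The only subtlety I anticipate is the single degenerate point $p=(\tfrac13,\tfrac13,\tfrac13)$, at which the discriminant vanishes and the two eigenvalues coincide at $-\tfrac12$; there $M_p|_{\mathcal{L}}$ acts as $-\tfrac12\,\mathrm{Id}$ and any pair of orthogonal unit vectors in $\mathcal{L}$ furnishes admissible $e_{-1}(p), e_0(p)$, so the lemma is preserved. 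Beyond this bookkeeping, the argument is routine and the geometric content of the lemma is effectively forced by column-stochasticity together with the trace/determinant computation.
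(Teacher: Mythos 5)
Your proposal is correct and follows essentially the same route as the paper: invariance of $\mathcal{L}$ from the columns of $M_p$ summing to one, the eigenvector $q_{[p]}$ for the eigenvalue $1$, and then trace/determinant giving $\lambda_0+\lambda_{-1}=-1$ and $\lambda_0\lambda_{-1}=\frac{2xyz}{(x+y)(x+z)(y+z)}\in[0,\tfrac14]$, with the boundary characterisation via vanishing of this product. Your added details (the direct-sum decomposition, the AM--GM bound, and the degenerate centre point where both eigenvalues equal $-\tfrac12$) merely spell out steps the paper's proof leaves implicit.
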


Now, if $(p_n,q_n)$ converges then the limit point must be a fixed point of $\Phi$ and thus satisfy $q+M_pq=2q_{[p]}$ and 
$M_pq=q$. Since $q\in \Sigma$, by the lemma it needs to be equal to $q_{[p]}$ and hence the limit point must be an 
equilibrium point $(p,q_{[p]})$. 
\smallskip

Further, if $(p_n,q_n)$ is such that $(p_n)$ converges and $(q_n)$ approaches a two-cycle then 
the limit and each of the points in the two-cycle need to satisfy $q+M_pq=2q_{[p]}$ and $M_p^2 q=q$.  By the lemma this means 
that $(M_p+I)(q-q_{[p]})=0$ and $(M_p^2-I)(q-q_{[p]})=0$, where $I$ denotes the identity matrix, implying that $p\in \partial\Sigma\setminus V$ and $q-q_{[p]}=\ell e_{-1}(p)$. 
\smallskip

However, the main challenge in Theorem~\ref{th_conv} is proving convergence. As Lemma~\ref{lemma_l} suggests, a key tool  
for that is the vector
\begin{align*}
v_n=\pi_n-q_{[\Theta_n]}\in\mathcal{L}, \qquad n\in\N.
\end{align*}
In Lemma~\ref{l_uw} we will show that for each $n$
\begin{align}
\label{iter_v}
v_{n+1}
&=\Big[\Big(1-\frac{\rho}{2}\Big)M_{\Theta_n}-\frac{\rho}{2}I\Big]v_n+U_{n+1}\\
\Theta_{n+1}-\Theta_n
&=-\rho(M_{\Theta_n}+I)v_n+W_{n+1}.
\label{iter_incr}
\end{align}
where $(U_n)$ and $(W_n)$ are some negligible random processes.  Further, in Lemma~\ref{l_norm} we show that 
the norm of $M_p$ on $\mathcal{L}$ is bounded by one, being equal to one if and only if $p$ lies on the boundary. 
This leads to the proof of Lemma~\ref{lemma_l} in Section~\ref{s_prelim} and also 
allows us to prove geometric convergence to the equilibrium for orbits that stay away from the boundary in Section~\ref{s_notedge}. 
In order to deal with orbits approaching the boundary, we first study the boundary orbits of the 
unperturbed dynamical system in Section~\ref{s_boundary}. This helps to tame the orbits $(\Theta_n,\pi_n)$
and show that they get sufficiently close to the limits suggested in Theorem~\ref{th_conv}. 
In Section~\ref{s_edge} we perform a delicate asymptotic analysis of the dynamics near the boundary, distinguishing between the cases when 
$\ell>0$ and $\ell=0$. Finally,  those results are combined in the beginning Section~\ref{s_main} to prove Theorem~\ref{th_conv}.
\smallskip 

The main theme of Section~\ref{s_main} is however proving Theorem~\ref{th_pos}. This is done by forcing the system to evolve in a certain, even if unlikely, way that would lead it deeply into the domain of attraction of the desired limiting behaviour, and showing that it will then stay there eventually.  
\smallskip

Finally, the non-monopolistic behaviour is discussed in Section~\ref{s_mon}, where Theorem~\ref{th_mon} is proved via a martingale argument. 
\smallskip

Unless stated otherwise, all arguments involving orbits of the process proceed pointwise almost surely on the sample space. All asymptotics are understood accordingly.

\bigskip

%%%%%%%%%%%%%%%%%%%%%%%%%%%%%%%%%%%%%%%%%
%%%%%%%%%%%%%%%%%%%%%%%%%%%%%%%%%%%%%%%%%
%%%%%%%%%%%%%%%%%%%%%%%%%%%%%%%%%%%%%%%%%

\section{A dynamical system with random perturbations}
\label{s_split}

The aim of this section is to split the random dynamical system $(\Theta_n,\pi_n)$ into a deterministic and a random part, 
to describe the 
corresponding dynamics of $v_n$, and to see how $v_n$ control the increments of $(\Theta_n)$.

\begin{prop} 
\label{p_split}
The process $ (\Theta_n,\pi_n)$ satisfies~\eqref{iter_lim_pi} and ~\eqref{iter_lim_th} with 
%$R_n=(R_n^\ssup{i})_{i\in I}$ and 
%$S_n=(S_n^\ssup{i})_{i\in I}$ given by
\begin{align*}
R_{n+1}^{\ssup i}&=\Upsilon_{n+1}^{\ssup{i\oplus 1}}\bar P_n^{\ssup{i\oplus 1}}
+\Upsilon_{n+1}^{\ssup{i\ominus 1}}P_n^{\ssup{i\ominus 1}}+\Xi_{n+1}^{\ssup i },\\
S_{n+1}^{\ssup i}&=\rho_{n+1}\big[\Upsilon_{n+1}^{\ssup{i\oplus 1}}+\Upsilon_{n+1}^{\ssup{i\ominus 1}}\big]
+(\rho_{n+1}-\rho)\big[1-\pi_n^{\ssup i}-\pi_{n+1}^{\ssup i}-\Theta_n^{\ssup i}\big],
\end{align*}
where 
\begin{align}
\label{def_r}
\Upsilon_{n+1}^{\ssup i}=\frac{1}{\Vert N_{n+1}\Vert }\sum_{k=1}^{N_n^{\ssup i}}Z^{\ssup i}_{n+1,k}-\pi_n^{\ssup i}
\qquad\text{and}\qquad
\Xi_{n+1}^{\ssup i}=
\frac{C_{n+1}^{\ssup{i\ominus 1}}-C_{n+1}^{\ssup{i\oplus 1}}}{\Vert N_{n+1}\Vert },
\end{align}
with 
\begin{align}
\label{cb}
C_{n+1}^{\ssup i}=B_{n+1}^{\ssup i}-P_n^{\ssup i}\sum_{k=1}^{N_n^{\ssup i}}Z^{\ssup i}_{n+1,k}
%=-[\bar B_{n+1}^{\ssup i}-\mu N_n^{\ssup i}\bar P_n^{\ssup i}]
%\qquad\text{and}\qquad
%\bar C_{n+1}^{\ssup i}=\bar B_{n+1}^{\ssup i}-\mu N_n^{\ssup i}\bar P_n^{\ssup i}
\end{align}
denoting the centralised binomials. 

\end{prop}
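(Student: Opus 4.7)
The plan is to verify the two identities \eqref{iter_lim_pi} and \eqref{iter_lim_th} by direct algebraic substitution. The key tool is the decomposition of each Binomial into its conditional mean plus its centred part: writing $\Sigma_{n+1}^{\ssup i}=\sum_{k=1}^{N_n^{\ssup i}}Z_{n+1,k}^{\ssup i}$ for brevity, one has $B_{n+1}^{\ssup i}=P_n^{\ssup i}\Sigma_{n+1}^{\ssup i}+C_{n+1}^{\ssup i}$ and therefore $\bar B_{n+1}^{\ssup i}=\bar P_n^{\ssup i}\Sigma_{n+1}^{\ssup i}-C_{n+1}^{\ssup i}$. I will also use two simple bookkeeping identities: $\sum_j \Sigma_{n+1}^{\ssup j}=\Vert N_{n+1}\Vert$ (conservation of particles), which yields $\sum_j\Upsilon_{n+1}^{\ssup j}=0$, and $\Vert T_{n+1}\Vert=\Vert T_n\Vert+\Vert N_{n+1}\Vert$ stated in the introduction.

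First I would verify \eqref{iter_lim_pi}. Plugging the $B,\bar B$ decomposition into \eqref{def_n} and dividing by $\Vert N_{n+1}\Vert$ gives
\[
\pi_{n+1}^{\ssup i}=P_n^{\ssup{i\ominus 1}}\frac{\Sigma_{n+1}^{\ssup{i\ominus 1}}}{\Vert N_{n+1}\Vert}+\bar P_n^{\ssup{i\oplus 1}}\frac{\Sigma_{n+1}^{\ssup{i\oplus 1}}}{\Vert N_{n+1}\Vert}+\frac{C_{n+1}^{\ssup{i\ominus 1}}-C_{n+1}^{\ssup{i\oplus 1}}}{\Vert N_{n+1}\Vert}.
\]
The last summand is $\Xi_{n+1}^{\ssup i}$ by \eqref{def_r}; substituting $\Sigma_{n+1}^{\ssup j}/\Vert N_{n+1}\Vert=\pi_n^{\ssup j}+\Upsilon_{n+1}^{\ssup j}$ then separates the $\Upsilon$-part (which equals the first two summands in $R_{n+1}^{\ssup i}$) from the deterministic part $P_n^{\ssup{i\ominus 1}}\pi_n^{\ssup{i\ominus 1}}+\bar P_n^{\ssup{i\oplus 1}}\pi_n^{\ssup{i\oplus 1}}$. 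Using $P_n^{\ssup i}=\Theta_n^{\ssup{i\ominus 1}}/(\Theta_n^{\ssup{i\ominus 1}}+\Theta_n^{\ssup{i\oplus 1}})$ and the analogous formula for $\bar P_n^{\ssup i}$, a direct case check against the rows of $M_p$ shows this deterministic part is exactly $(M_{\Theta_n}\pi_n)^{\ssup i}$.

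Next I would verify \eqref{iter_lim_th}. Combining \eqref{def_t} with $B^{\ssup j}+\bar B^{\ssup j}=\Sigma_{n+1}^{\ssup j}$ and the conservation identity gives the clean rearrangement
\[
T_{n+1}^{\ssup i}-T_n^{\ssup i}=B_{n+1}^{\ssup{i\oplus 1}}+\bar B_{n+1}^{\ssup{i\ominus 1}}=\Vert N_{n+1}\Vert-\Sigma_{n+1}^{\ssup i}-N_{n+1}^{\ssup i}.
\]
Dividing by $\Vert T_{n+1}\Vert=\Vert T_n\Vert+\Vert N_{n+1}\Vert$ and inserting $\Sigma_{n+1}^{\ssup i}/\Vert N_{n+1}\Vert=\pi_n^{\ssup i}+\Upsilon_{n+1}^{\ssup i}$ yields
\[
\Theta_{n+1}^{\ssup i}=(1-\rho_{n+1})\Theta_n^{\ssup i}+\rho_{n+1}\bigl(1-\pi_n^{\ssup i}-\pi_{n+1}^{\ssup i}\bigr)-\rho_{n+1}\Upsilon_{n+1}^{\ssup i}.
\]
Splitting $\rho_{n+1}=\rho+(\rho_{n+1}-\rho)$ isolates the deterministic main part $(1-\rho)\Theta_n^{\ssup i}+\rho(1-\pi_n^{\ssup i}-\pi_{n+1}^{\ssup i})$, and the remainder equals $(\rho_{n+1}-\rho)[1-\pi_n^{\ssup i}-\pi_{n+1}^{\ssup i}-\Theta_n^{\ssup i}]-\rho_{n+1}\Upsilon_{n+1}^{\ssup i}$. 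Finally, invoking $\Upsilon_{n+1}^{\ssup{i\oplus 1}}+\Upsilon_{n+1}^{\ssup{i\ominus 1}}=-\Upsilon_{n+1}^{\ssup i}$ produces exactly the advertised symmetric form of $S_{n+1}^{\ssup i}$.

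There is no genuine obstacle here; the proof is a routine but index-heavy verification. The only point requiring mild care is the rewriting of $-\rho_{n+1}\Upsilon_{n+1}^{\ssup i}$ as $\rho_{n+1}(\Upsilon_{n+1}^{\ssup{i\oplus 1}}+\Upsilon_{n+1}^{\ssup{i\ominus 1}})$ via the zero-sum identity, which is what makes the expression for $S_{n+1}^{\ssup i}$ manifestly symmetric in the two neighbouring indices and hence convenient for the later error estimates.
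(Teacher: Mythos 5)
Your proposal is correct and follows essentially the same route as the paper: decompose each binomial into its conditional mean plus the centred part $C_{n+1}^{\ssup i}$, normalise \eqref{def_t} and \eqref{def_n} by $\Vert T_{n+1}\Vert$ and $\Vert N_{n+1}\Vert$, and identify the deterministic part with $M_{\Theta_n}\pi_n$ and the remainders with $R_{n+1}$, $S_{n+1}$. The only cosmetic difference is in the $\Theta$-recursion, where you use the exact increment identity $T_{n+1}^{\ssup i}-T_n^{\ssup i}=\Vert N_{n+1}\Vert-\sum_{k}Z_{n+1,k}^{\ssup i}-N_{n+1}^{\ssup i}$ together with the zero-sum identity $\sum_j\Upsilon_{n+1}^{\ssup j}=0$, while the paper reaches the same expression for $S_{n+1}^{\ssup i}$ via $P_n^{\ssup j}+\bar P_n^{\ssup j}=1$ and the already-derived recursion for $\pi_{n+1}$; both computations are valid.
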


\begin{proof}
Observe that~\eqref{def_t} and~\eqref{def_n} can be rewritten as
\begin{align}
\label{def_t_c}
T^{\ssup i}_{n+1}&=T_n^{\ssup i}+
P_n^{\ssup{i\oplus 1}}\sum_{k=1}^{N_n^{\ssup{i\oplus 1}}}Z^{\ssup{i\oplus 1}}_{n+1,k}
+\bar P_n^{\ssup{i\ominus 1}}\sum_{k=1}^{N_n^{\ssup{i\ominus 1}}}Z^{\ssup{i\ominus 1}}_{n+1,k}
+C_{n+1}^{\ssup{i\oplus 1}}-C_{n+1}^{\ssup{i\ominus 1}},\\
N^{\ssup i}_{n+1}&=
P_n^{\ssup{i\ominus 1}}\sum_{k=1}^{N_n^{\ssup{i\ominus 1}}}Z^{\ssup{i\ominus 1}}_{n+1,k}
+\bar P_n^{\ssup{i\oplus 1}}\sum_{k=1}^{N_n^{\ssup{i\oplus 1}}}Z^{\ssup{i\oplus 1}}_{n+1,k}
+C_{n+1}^{\ssup{i\ominus 1}}-C_{n+1}^{\ssup{i\oplus 1}}.
\label{def_n_c}
\end{align}
Normalising~\eqref{def_t_c} and~\eqref{def_n_c} as in~\eqref{def_thpi} and using $\frac{\Vert T_n\Vert }{\Vert T_{n+1}\Vert }=1-\rho_{n+1}$, 
we obtain
\begin{align}
%\Theta^{\ssup i}_{n+1}&=(1-\rho_{n+1})\Theta_n^{\ssup i}+
%\rho_{n+1}\big[\pi_n^{\ssup{i\oplus 1}}P_n^{\ssup{i\oplus 1}}
%+\pi_n^{\ssup{i\ominus 1}}\bar P_n^{\ssup{i\ominus 1}}\big]\notag\\
\Theta^{\ssup i}_{n+1}&=(1-\rho_{n+1})\Theta_n^{\ssup i}+
\rho_{n+1}\big[\pi_n^{\ssup{i\oplus 1}}P_n^{\ssup{i\oplus 1}}
+\pi_n^{\ssup{i\ominus 1}}\bar P_n^{\ssup{i\ominus 1}}
+\Upsilon_{n+1}^{\ssup{i\oplus 1}}P_n^{\ssup{i\oplus 1}}
+\Upsilon_{n+1}^{\ssup{i\ominus 1}}\bar P_n^{\ssup{i\ominus 1}}
-\Xi^{\ssup i}_{n+1}\big],
\label{iter_th2}\\
\pi^{\ssup i}_{n+1}&=
\pi_n^{\ssup{i\oplus 1}}\bar P_n^{\ssup{i\oplus 1}}
+\pi_n^{\ssup{i\ominus 1}}P_n^{\ssup{i\ominus 1}}
+\Upsilon_{n+1}^{\ssup{i\oplus 1}}\bar P_n^{\ssup{i\oplus 1}}
+\Upsilon_{n+1}^{\ssup{i\ominus 1}}P_n^{\ssup{i\ominus 1}}+\Xi_{n+1}^{\ssup i }. 
\label{iter_pi2}
\end{align}
Using the fact that $\pi_n\in \Sigma$ we can rewrite~\eqref{iter_th2} and~\eqref{iter_pi2} as
\begin{align}
\label{iter_pi3}
\pi^{\ssup i}_{n+1}&
= \pi_n^{\ssup{i\oplus 1}}\frac{\Theta_n^{\ssup{i\ominus 1}}}{\Theta_n^{\ssup{i}}+\Theta_n^{\ssup{i\ominus 1}}}
+\pi_n^{\ssup{i\ominus 1}}\frac{\Theta_n^{\ssup{i\oplus 1}}}{\Theta_n^{\ssup{i\oplus 1}}+\Theta_n^{\ssup i}}+R^{\ssup i}_{n+1},\\
\Theta^{\ssup i}_{n+1}
&=(1-\rho)\Theta_n^{\ssup i}
+\rho\big[1-\pi_n^{\ssup i}-\pi_{n+1}^{\ssup i}\big]+S^{\ssup i}_{n+1},
\label{iter_th3}
\end{align} 
which is equivalent to~\eqref{iter_lim_pi} and ~\eqref{iter_lim_th}.
\end{proof}

\begin{lemma}  
\label{l_uw}
The iterations~\eqref{iter_v} and~\eqref{iter_incr} hold with 
\begin{align*}
U_{n+1}=\Big(1-\frac{\rho}{2}\Big)R_{n+1}+\frac 1 2 S_{n+1} 
\qquad\text{ and }\qquad W_{n+1}=S_{n+1}-\rho R_{n+1}.
\end{align*}
\end{lemma}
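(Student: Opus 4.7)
The plan is a direct algebraic verification, substituting the defining identity $v_n=\pi_n-q_{[\Theta_n]}$ into the two iterations from Proposition~\ref{p_split}. The two structural facts I will rely on are the eigenvector relation $M_p q_{[p]}=q_{[p]}$ (observed right after~\eqref{qp}) and the affinity of the map $p\mapsto q_{[p]}=\frac12(\one-p)$, which yields
\begin{align*}
q_{[\Theta_n]}-q_{[\Theta_{n+1}]}=\tfrac12(\Theta_{n+1}-\Theta_n).
\end{align*}

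First I would handle the increment formula for $\Theta$. Starting from~\eqref{iter_lim_th} and writing $\one-\Theta_n=2q_{[\Theta_n]}$,
\begin{align*}
\Theta_{n+1}-\Theta_n=\rho(q_{[\Theta_n]}-\pi_n)+\rho(q_{[\Theta_n]}-\pi_{n+1})+S_{n+1}.
\end{align*}
The first bracket equals $-\rho v_n$. For the second bracket, I substitute $\pi_n=v_n+q_{[\Theta_n]}$ into~\eqref{iter_lim_pi} and apply $M_{\Theta_n}q_{[\Theta_n]}=q_{[\Theta_n]}$ to obtain $\pi_{n+1}=M_{\Theta_n}v_n+q_{[\Theta_n]}+R_{n+1}$. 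Substituting back gives
\begin{align*}
\Theta_{n+1}-\Theta_n=-\rho v_n-\rho M_{\Theta_n}v_n-\rho R_{n+1}+S_{n+1}=-\rho(M_{\Theta_n}+I)v_n+W_{n+1},
\end{align*}
which is~\eqref{iter_incr} with $W_{n+1}=S_{n+1}-\rho R_{n+1}$ as claimed.

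Second I would derive~\eqref{iter_v}. Writing
\begin{align*}
v_{n+1}=\pi_{n+1}-q_{[\Theta_{n+1}]}=\pi_{n+1}-q_{[\Theta_n]}+\tfrac12(\Theta_{n+1}-\Theta_n),
\end{align*}
using the expression for $\pi_{n+1}$ derived above, and then plugging in the increment formula just obtained,
\begin{align*}
v_{n+1}=M_{\Theta_n}v_n+R_{n+1}+\tfrac12\bigl[-\rho(M_{\Theta_n}+I)v_n+W_{n+1}\bigr].
\end{align*}
Collecting the $v_n$ terms produces the operator $(1-\tfrac\rho2)M_{\Theta_n}-\tfrac\rho2 I$, and collecting the remaining terms gives $R_{n+1}+\tfrac12 W_{n+1}=(1-\tfrac\rho2)R_{n+1}+\tfrac12 S_{n+1}=U_{n+1}$.

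There is no real obstacle: this is just a rearrangement. The only small care needed is to apply the $\Theta$-iteration first and then reuse it inside the $v$-iteration (rather than trying to compute both simultaneously), so that the telescoping between $q_{[\Theta_n]}$ and $q_{[\Theta_{n+1}]}$ can be replaced by the explicit formula involving $v_n$.
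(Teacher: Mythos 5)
Your proof is correct and is essentially the same computation as the paper's: a direct substitution of the relations from Proposition~\ref{p_split} into $v_n=\pi_n-q_{[\Theta_n]}$, using $M_{\Theta_n}q_{[\Theta_n]}=q_{[\Theta_n]}$ and $q_{[p]}=\frac12(\one-p)$. The only difference is cosmetic ordering: you derive~\eqref{iter_incr} first and feed it into~\eqref{iter_v} via the affine relation $q_{[\Theta_n]}-q_{[\Theta_{n+1}]}=\frac12(\Theta_{n+1}-\Theta_n)$, whereas the paper expands $q_{[\Theta_{n+1}]}$ directly in each identity.
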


\begin{proof} Using~\eqref{qp}, \eqref{iter_lim_pi}, \eqref{iter_lim_th} and $M_{\Theta_n}q_{[\Theta_n]}=q_{[\Theta_n]}$  we get for each $n$
\begin{align*}
v_{n+1}
&=\pi_{n+1}-\frac 1 2\Big[ \one-(1-\rho)\Theta_n-\rho(\one-\pi_n-\pi_{n+1})-S_{n+1}\Big]\notag\\
&=\Big(1-\frac{\rho}{2}\Big)\pi_{n+1}-(1-\rho)q_{[\Theta_n]}-\frac{\rho}{2}\pi_n+\frac{1}{2}S_{n+1}\notag\\
&=\Big(1-\frac{\rho}{2}\Big)M_{\Theta_n}\pi_{n}-(1-\rho)q_{[\Theta_n]}-\frac{\rho}{2}\pi_n+U_{n+1}
=\Big[\Big(1-\frac{\rho}{2}\Big)M_{\Theta_n}-\frac{\rho}{2}I\Big]v_n+U_{n+1}
\end{align*}
and
\begin{align*}
\Theta_{n+1}-\Theta_n
&=\rho(\one-\Theta_n-\pi_n-\pi_{n+1})+S_{n+1}\notag\\
&=\rho(2q_{[\Theta_n]}-\pi_n-M_{\Theta_n}\pi_n)+W_{n+1}
=-\rho(M_{\Theta_n}+I)v_n+W_{n+1},
\end{align*}
as required. 
\end{proof}

%%%%%%%%%%%%%%%%%%%%%%%%%%%%%%%%%%%%%%%%%%%%%%%%%%%%%%%%
%%%%%%%%%%%%%%%%%%%%%%%%%%%%%%%%%%%%%%%%%%%%%%%%%%%%%%%%
%%%%%%%%%%%%%%%%%%%%%%%%%%%%%%%%%%%%%%%%%%%%%%%%%%%%%%%%

\section{Random perturbations}

The aim of this section is to show that the random perturbations are negligible. 
\smallskip

We will begin by showing that the Galton-Watson branching mechanism
behaves similarly enough to the deterministic branching with rate $\mu$.   

\begin{lemma}  
\label{l_n8}
The process $\mu^{-n}\Vert N_n\Vert$ converges almost surely to a positive random variable and, in particular, for any $\nu\in (1,\mu)$, $\Vert N_n\Vert \ge \nu^{n}$ eventually almost surely.  
Furthermore, for any $\nu\in (1,\mu)$,
\begin{align*}
\1_{\{\Vert N_m\Vert \ge \nu^m\}}\P_{\mathcal{F}_m}\Big(\exists n>m\text{ such that }\Vert N_n\Vert<\nu^n\Big)\to 0
\end{align*}
as $m\to\infty$ uniformly on $\Omega$. 
\end{lemma}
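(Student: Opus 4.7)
The plan is to recognise that the total population $\Vert N_n\Vert$ evolves as a standard Galton--Watson (GW) process and then to combine $L^2$ martingale convergence with Doob's inequality. First, since movement preserves the total particle count,
\[
\Vert N_{n+1}\Vert = \sum_{i\in I}\sum_{k=1}^{N_n^{\ssup i}} Z^{\ssup i}_{n+1,k}
\]
is, conditionally on $\mathcal{F}_n$, a sum of $\Vert N_n\Vert$ i.i.d.\ copies of $Z$. Hence $M_n := \mu^{-n}\Vert N_n\Vert$ is a nonnegative martingale, and the assumption $\E(Z^2)<\infty$ yields the usual GW variance formula making $(M_n)$ an $L^2$-bounded martingale. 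By martingale convergence, $M_n\to W$ almost surely and in $L^2$, with $\E(W)=\Vert N_0\Vert>0$. Since the probability generating function $G$ of $Z$ is convex with $G(0)=0$ (as $\P(Z=0)=0$) and $G'(1)=\mu>1$, its only fixed points on $[0,1]$ are $0$ and $1$; the standard subtree decomposition $\P(W=0)=\E[\P(W=0)^Z]$ then forces $\P(W>0)=1$. The eventual lower bound $\Vert N_n\Vert\ge\nu^n$ is immediate since $\Vert N_n\Vert/\nu^n=(\mu/\nu)^n M_n\to\infty$ almost surely.

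For the uniform conditional bound, the plan is to rescale starting from time $m$ rather than from $0$. Conditionally on $\mathcal{F}_m$, the process $(\Vert N_{m+k}\Vert)_{k\ge 0}$ is distributed as a GW process started from $N:=\Vert N_m\Vert$ particles, realised as $\Vert N_{m+k}\Vert=\sum_{j=1}^N \xi^{\ssup j}_k$ for i.i.d.\ single-particle GW processes $\xi^{\ssup j}$. I would then work with
\[
\hat M_k := \frac{\Vert N_{m+k}\Vert}{N\mu^k} = \frac{1}{N}\sum_{j=1}^N \frac{\xi^{\ssup j}_k}{\mu^k},
\]
a nonnegative $\mathcal{F}_m$-conditional martingale with $\hat M_0=1$, whose limit $\hat W$ is an average of $N$ i.i.d.\ unit-mean random variables of finite variance $\sigma_W^2<\infty$, so $\Var(\hat W\mid\mathcal{F}_m)=\sigma_W^2/N$.

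The key observation is that on $\{\Vert N_m\Vert\ge\nu^m\}$ the failure event $\{\Vert N_{m+k}\Vert<\nu^{m+k}\}$ for some $k\ge 1$ forces $\hat M_k<(\nu/\mu)^k\cdot\nu^m/N\le\nu/\mu$, and therefore $\sup_{k\ge 1}|\hat M_k-1|>1-\nu/\mu$. Doob's $L^2$ maximal inequality applied to the martingale $\hat M_k-1$ then yields
\[
\P_{\mathcal{F}_m}\big(\exists n>m:\Vert N_n\Vert<\nu^n\big) \le \frac{\sigma_W^2}{N(1-\nu/\mu)^2} \le \frac{\sigma_W^2}{\nu^m(1-\nu/\mu)^2},
\]
a deterministic bound tending to $0$ as $m\to\infty$, which delivers the claimed uniform convergence. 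The main subtlety lies in selecting the right normalisation for the post-$m$ martingale: the naive choice $\mu^{-n}\Vert N_n\Vert$ yields a martingale whose conditional variance is of order $\Vert N_m\Vert\mu^{-m}$, giving bounds that tend to $0$ uniformly only for $\nu>\sqrt{\mu}$. Rescaling by $\Vert N_m\Vert$ instead averages over the independent subtrees and shrinks the variance to order $1/\Vert N_m\Vert$, which is what makes the argument go through for every $\nu\in(1,\mu)$.
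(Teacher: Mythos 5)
Your proof is correct, but it reaches the second (uniform conditional) statement by a genuinely different route than the paper. For the first statement the difference is cosmetic: the paper decomposes $\Vert N_n\Vert$ into $\Vert N_0\Vert$ independent single-ancestor Galton--Watson processes and quotes the $L^2$ convergence theorem of Athreya--Ney, while you work directly with the martingale $\mu^{-n}\Vert N_n\Vert$ and re-derive positivity of the limit via the fixed points of the generating function; both are standard and equivalent. For the second statement, the paper does not renormalise at time $m$ at all: it applies a union bound over the first index $n>m$ at which the threshold is crossed, and at each such step uses a conditional Chebyshev inequality for $\Vert N_{n+1}\Vert$ around its conditional mean $\mu\Vert N_n\Vert$ (the crossing forces a deviation of size $(\mu-\nu)\Vert N_n\Vert$, whose conditional variance is $\Vert N_n\Vert\mathrm{var}(Z)$), and then sums the resulting geometric series $\sum_{n\ge m}\nu^{-n}$. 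You instead make a single application of the (conditional) Doob $L^2$ maximal inequality to the post-$m$ martingale $\hat M_k=\Vert N_{m+k}\Vert/(\Vert N_m\Vert\mu^k)$, observing that any later crossing forces $|\hat M_k-1|>1-\nu/\mu$, which yields the clean one-shot bound $\sigma_W^2/\bigl(\Vert N_m\Vert(1-\nu/\mu)^2\bigr)\le \sigma_W^2/\bigl(\nu^m(1-\nu/\mu)^2\bigr)$; your closing remark about why rescaling by $\Vert N_m\Vert$ (rather than keeping $\mu^{-n}\Vert N_n\Vert$) is needed to cover all $\nu\in(1,\mu)$ is accurate. What each approach buys: your maximal-inequality argument is a single estimate with an explicit constant $\sigma_W^2=\mathrm{var}(Z)/(\mu^2-\mu)$, at the cost of invoking conditional $L^2$ convergence of the renormalised limit; the paper's stepwise union-bound-plus-Chebyshev template needs only the offspring variance and, importantly, is recycled verbatim in Lemmas~\ref{l_n11}, \ref{l_xi} and~\ref{l_ga}, which is why the paper phrases it that way. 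Both arguments cover the full range $\nu\in(1,\mu)$ and deliver the required uniformity on $\Omega$, since the final bounds are deterministic.
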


\begin{proof} We have
\begin{align*}
\Vert N_n\Vert &=\sum_{i=1}^{\Vert N_0\Vert }X^{\sssup i}_{n},
\end{align*}
where $(X^{\sssup i}_{n})_{n\ge 0}, 0\le i\le \Vert N_0\Vert $ are independent Galton-Watson processes with the offspring distribution $Z$. 
To prove the first statement, it remains to observe that each
$\mu^{-n}X^{\sssup i}_n$ converges almost surely to a positive random variable by the $L^2$ Martingale Convergence Theorem, see~\cite[\S 1.6]{AthNey70}. 
\smallskip

To prove the second statement, observe that on the event $\{\Vert N_m\Vert \ge \nu^m\}$ we have 
\begin{align}
\P_{\mathcal{F}_m}\Big(\exists n>m\text{ such that }\Vert  N_n\Vert <\nu^n\Big)
&\le \sum_{n=m}^{\infty}\P_{\mathcal{F}_m}\Big(\Vert N_{n+1}\Vert <\nu^{n+1} \text{ and } \Vert N_n\Vert \ge \nu^n\Big)\notag\\
&= \sum_{n=m}^{\infty}\E_{\mathcal{F}_m}\Big[\1_{\{\Vert N_n\Vert \ge \nu^n\}}
\P_{\mathcal{F}_n}\big(\Vert N_{n+1}\Vert <\nu^{n+1} \big)
\Big].
\label{mmnn}
\end{align}
Further, on the event $\{\Vert N_n\Vert \ge \nu^n\}$ we have by Chebychev's inequality
\begin{align*}
\P_{\mathcal{F}_n}\big(\Vert N_{n+1}\Vert <\nu^{n+1} \big)\le 
\P_{\mathcal{F}_n}\Big(\big| \Vert N_{n+1}\Vert -\mu \Vert N_n\Vert \big|\ge (\mu-\nu) \Vert N_n\Vert  \Big)\le 
\frac{\text{var(Z)}}{(\mu-\nu)^2\Vert N_n\Vert }\le \frac{\text{var(Z)}}{(\mu-\nu)^2\nu^n}.
\end{align*} 
Combining this with~\eqref{mmnn} we obtain 
\begin{align*}
\1_{\{\Vert N_m\Vert \ge \nu^m\}}\P_{\mathcal{F}_m}\Big(\exists n>m\text{ such that }\Vert N_n\Vert <\nu^n\Big)
\le \frac{\text{var(Z)}}{(\mu-\nu)^2}\sum_{n=m}^{\infty}\nu^{-n}\to 0
\end{align*}
as $m\to\infty$ uniformly on $\Omega$. 
\end{proof}

The following lemma shows that if the population is big enough 
it will increase exponentially close to 
a population deterministically growing at rate $\mu$. 

\begin{lemma} 
\label{l_n11}
For any $1<\nu<\hat\nu <\mu^{\frac 1 2}$
\begin{align*}
\1_{\{\Vert N_m\Vert \ge \hat\nu^{2m}\}}\P_{\mathcal{F}_m}\Big(\exists n\ge m\text{ \rm such that }
\Big|\frac{\mu\Vert N_{n}\Vert }{\Vert N_{n+1}\Vert }-1\Big|>\nu^{-n}\Big)
& \to 0
\end{align*}
as $m\to\infty$ uniformly on $\Omega$. In particular, 
$\frac{\Vert N_{n+1}\Vert }{\Vert N_n\Vert }=\mu+O(\nu^{-n})$ almost surely.
\end{lemma}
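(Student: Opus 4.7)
The plan is to mirror the strategy used for Lemma~\ref{l_n8}: apply a conditional Chebyshev bound at each step $n\ge m$, sum the resulting tail probabilities, and combine with a uniform control keeping $\|N_n\|$ above $\hat\nu^{2n}$.

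The starting observation is that, conditionally on $\mathcal{F}_n$, we have $\|N_{n+1}\|=\sum_{i\in I}\sum_{k=1}^{N_n^{\ssup i}}Z_{n+1,k}^{\ssup i}$, a sum of $\|N_n\|$ i.i.d.\ copies of $Z$, so $\E_{\mathcal{F}_n}\|N_{n+1}\|=\mu\|N_n\|$ and $\Var_{\mathcal{F}_n}(\|N_{n+1}\|)=\Var(Z)\|N_n\|$. Denote the bad event by $A_n:=\{|\mu\|N_n\|/\|N_{n+1}\|-1|>\nu^{-n}\}$, equivalently $|\|N_{n+1}\|-\mu\|N_n\||>\nu^{-n}\|N_{n+1}\|$. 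A short case split on whether $\|N_{n+1}\|\ge \mu\|N_n\|/2$ shows that, for $n$ large enough so that $\nu^{-n}\le 1$,
$$A_n\subseteq\bigl\{|\|N_{n+1}\|-\mu\|N_n\||>\tfrac{\mu\nu^{-n}}{2}\|N_n\|\bigr\}.$$
Conditional Chebyshev then yields $\P_{\mathcal{F}_n}(A_n)\le 4\Var(Z)\nu^{2n}/(\mu^2\|N_n\|)$, which on $\{\|N_n\|\ge\hat\nu^{2n}\}$ is dominated by a constant multiple of $(\nu/\hat\nu)^{2n}$, hence geometrically summable since $\nu<\hat\nu$.

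Following the template of Lemma~\ref{l_n8}, I would decompose
$$\P_{\mathcal{F}_m}(\exists n\ge m:A_n)\le \P_{\mathcal{F}_m}(\exists n\ge m:\|N_n\|<\hat\nu^{2n}) + \sum_{n=m}^\infty \E_{\mathcal{F}_m}\bigl[\1_{\{\|N_n\|\ge\hat\nu^{2n}\}}\P_{\mathcal{F}_n}(A_n)\bigr].$$
On the event $\{\|N_m\|\ge\hat\nu^{2m}\}$, the first summand vanishes uniformly as $m\to\infty$ by Lemma~\ref{l_n8} applied with rate $\hat\nu^2\in(1,\mu)$, while the second is a geometric tail bounded by a constant times $(\nu/\hat\nu)^{2m}$, also vanishing uniformly. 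The ``in particular'' statement then follows by combining this uniform bound with the fact (from Lemma~\ref{l_n8}) that $\|N_m\|\ge\hat\nu^{2m}$ eventually almost surely, together with a Borel--Cantelli argument to conclude $|\mu\|N_n\|/\|N_{n+1}\|-1|\le \nu^{-n}$ for all large $n$; inverting this relation yields $\|N_{n+1}\|/\|N_n\|=\mu+O(\nu^{-n})$.

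I do not expect a serious obstacle here; the only delicate point is the case split that turns the relative-error event with $\|N_{n+1}\|$ in the denominator into a clean absolute-deviation event controlled by $\|N_n\|$, so that the conditional variance bound becomes directly exploitable. The rest is a template imitation of the second half of Lemma~\ref{l_n8}.
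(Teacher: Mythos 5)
Your proof is correct and follows essentially the same route as the paper: the same decomposition over $\{\Vert N_n\Vert\ge\hat\nu^{2n}\}$ with Lemma~\ref{l_n8} controlling the escape event, a conditional Chebyshev bound summed geometrically via $(\nu/\hat\nu)^{2n}$, and the same limiting argument for the almost sure statement. The only difference is cosmetic: the paper passes to the ``equivalent'' event $\{|\Vert N_{n+1}\Vert/\Vert N_n\Vert-\mu|>\nu^{-n}\}$ so that Chebyshev applies directly, whereas you keep the original event and convert it by an explicit case split into an absolute-deviation event scaled by $\Vert N_n\Vert$ --- if anything, your version makes explicit a reduction the paper leaves unjustified.
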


\begin{proof} 
We will prove a statement equivalent
to the first one, namely that for any $1<\nu<\hat\nu <\mu^{\frac 1 2}$ 
\begin{align*}
\1_{\{\Vert N_m\Vert \ge \hat\nu^{2m}\}}\P_{\mathcal{F}_m}\Big(\exists n\ge m\text{ \rm such that }
\Big|\frac{\Vert N_{n+1}\Vert }{\Vert N_n\Vert }-\mu\Big|>\nu^{-n}\Big)
& \to 0
\end{align*}
as $m\to\infty$ uniformly on $\Omega$.
Observe that
on the event $\{\Vert N_m\Vert \ge \hat\nu^{2m}\}$ we have 
\begin{align}
&\P_{\mathcal{F}_m}\Big(\exists n\ge m\text{ such that }
\Big|\frac{\Vert N_{n+1}\Vert }{\Vert N_n\Vert }-\mu\Big|>\nu^{-n}\Big)\notag\\
&\le \sum_{n=m}^{\infty}\P_{\mathcal{F}_m}\Big(
\Big|\frac{\Vert N_{n+1}\Vert }{\Vert N_n\Vert }-\mu\Big|>\nu^{-n}, \Vert N_n\Vert \ge\hat \nu^{2n}\Big)
+ \P_{\mathcal{F}_m}\Big(\exists n>m\text{ such that }\Vert N_n\Vert<\hat\nu^{2n}\Big)\notag\\
&= \sum_{n=m}^{\infty}\E_{\mathcal{F}_m}\Big[\1_{\{\Vert N_n\Vert \ge \hat\nu^{2n}\}}
\P_{\mathcal{F}_n}
\Big(\Big|\frac{\Vert N_{n+1}\Vert }{\Vert N_n\Vert }-\mu\Big|>\nu^{-n}\Big)\Big]
+ \P_{\mathcal{F}_m}\Big(\exists n>m\text{ such that }\Vert N_n\Vert<\hat\nu^{2n}\Big). 
\label{n7}
\end{align}
On the event $\{\Vert N_n\Vert \ge \hat\nu^{2n}\}$ we have by Chebychev's inequality 
\begin{align*}
\P_{\mathcal{F}_n}\Big(\Big|\frac{\Vert N_{n+1}\Vert }{\Vert N_n\Vert }-\mu\Big|>\nu^{-n}\Big)
\le \frac{\nu^{2n}\text{var}(Z)}{\Vert N_n\Vert}\le \Big(\frac{\nu}{\hat \nu}\Big)^{2n}\text{var}(Z).
\end{align*}
Combining this with~\eqref{n7} and Lemma~\ref{l_n8} we obtain  
\begin{align*}
 &\1_{\{\Vert N_m\Vert \ge \hat\nu^{2m}\}}\P_{\mathcal{F}_m}\Big(\exists n\ge m\text{ such that }
\Big|\frac{\Vert N_{n+1}\Vert }{\Vert N_n\Vert }-\mu\Big|>\nu^{-n}\Big)\\
&\le \text{var}(Z)\sum_{n=m}^{\infty}\Big(\frac{\nu}{\hat \nu}\Big)^{2n}
+\1_{\{\Vert N_m\Vert \ge \hat\nu^{2m}\}}\P_{\mathcal{F}_m}\Big(\exists n>m\text{ such that }\Vert N_n\Vert<\hat\nu^{2n}\Big)\to 0
\end{align*}
as $m\to\infty$ uniformly on $\Omega$. 
%Finally, \eqref{muminus1} follows from~\eqref{mu1} 
%since the latter holds for any $\nu\in (1,\mu^{\frac 1 2})$. 
\smallskip

To prove the second statement, it remains to observe that 
\begin{align*}
\P\Big(\Big|\frac{\Vert N_{n+1}\Vert }{\Vert N_n\Vert }-\mu\Big|>\nu^{-n} \text{ i.o.}\Big)
&\le \lim_{m\to\infty}\E\Big[\1_{\{\Vert N_m\Vert \ge \hat\nu^{2m}\}}\P_{\mathcal{F}_m}\Big(\exists n\ge m\text{ such that }\Big|\frac{\Vert N_{n+1}\Vert }{\Vert N_n\Vert }-\mu\Big|>\nu^{-n}\Big)\Big]\\
&+\lim_{m\to\infty}\P\big(\Vert N_m\Vert < \hat\nu^{2m}\big)=0
\end{align*}
by the previous statement and Lemma~\ref{l_n8}.
\end{proof}

The following lemma shows that if the population is big enough, 
then its spacial fluctuations are exponentially small. 

\begin{lemma} 
\label{l_xi}
For any $1<\nu<\hat\nu <\mu^{\frac 1 2}$
\begin{align*}
\1_{\{\Vert N_m\Vert \ge \hat\nu^{2m}\}}\P_{\mathcal{F}_m}\Big(\exists n\ge m\text{ such that }\Vert \Xi_{n+1}\Vert>\nu^{-n}\Big)\to 0
\end{align*}
as $m\to\infty$ uniformly on $\Omega$. In particular, $\Vert \Xi_{n+1}\Vert=O(\nu^{-n})$ almost surely. 
\end{lemma}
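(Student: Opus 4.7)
The plan is to mirror the two-level structure of the proof of Lemma~\ref{l_n11}, replacing the branching fluctuation by the movement fluctuation $\Xi_{n+1}$. The enabling observation is that $\Vert N_{n+1}\Vert=\sum_{i\in I}M_n^{\ssup i}$, where $M_n^{\ssup i}:=\sum_{k=1}^{N_n^{\ssup i}}Z^{\ssup i}_{n+1,k}$, is $\mathcal{F}_n^\star$-measurable, since the branching has already occurred by the time of $\mathcal{F}_n^\star$ and only the Bernoulli movement choices remain random. Conditionally on $\mathcal{F}_n^\star$, therefore, each centred binomial $C_{n+1}^{\ssup i}$ has mean zero and variance $M_n^{\ssup i}P_n^{\ssup i}\bar P_n^{\ssup i}\le M_n^{\ssup i}/4$, and the three coordinates $C_{n+1}^{\ssup 1}$, $C_{n+1}^{\ssup 2}$, $C_{n+1}^{\ssup 3}$ are independent by construction of the model.

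A conditional Chebyshev inequality applied coordinatewise then gives
\[
\P_{\mathcal{F}_n^\star}\big(|\Xi_{n+1}^{\ssup i}|>\tfrac{1}{3}\nu^{-n}\big)\le \frac{9\nu^{2n}\big(M_n^{\ssup{i\ominus 1}}+M_n^{\ssup{i\oplus 1}}\big)}{4\Vert N_{n+1}\Vert^2}\le \frac{9\nu^{2n}}{4\Vert N_{n+1}\Vert},
\]
and a union bound combined with $\Vert\Xi_{n+1}\Vert\le 3\max_i|\Xi_{n+1}^{\ssup i}|$ yields $\P_{\mathcal{F}_n^\star}(\Vert\Xi_{n+1}\Vert>\nu^{-n})\le C\nu^{2n}/\Vert N_{n+1}\Vert$ for an absolute constant $C$. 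On the $\mathcal{F}_n^\star$-measurable event $\{\Vert N_{n+1}\Vert\ge \hat\nu^{2(n+1)}\}$ this is bounded by $(C/\hat\nu^2)(\nu/\hat\nu)^{2n}$, a bound that is summable in $n$ precisely because $\nu<\hat\nu$.

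Following the proof of Lemma~\ref{l_n11}, I then split
\begin{align*}
&\P_{\mathcal{F}_m}\Big(\exists n\ge m: \Vert\Xi_{n+1}\Vert>\nu^{-n}\Big)\\
&\le \sum_{n\ge m}\E_{\mathcal{F}_m}\Big[\1_{\{\Vert N_{n+1}\Vert\ge \hat\nu^{2(n+1)}\}}\P_{\mathcal{F}_n^\star}\big(\Vert\Xi_{n+1}\Vert>\nu^{-n}\big)\Big]+\P_{\mathcal{F}_m}\Big(\exists n>m: \Vert N_n\Vert<\hat\nu^{2n}\Big).
\end{align*}
The first sum is dominated by the geometric tail $C'\sum_{n\ge m}(\nu/\hat\nu)^{2n}$, which tends to $0$ as $m\to\infty$ uniformly on $\Omega$; the second, when multiplied by $\1_{\{\Vert N_m\Vert\ge\hat\nu^{2m}\}}$, vanishes uniformly by Lemma~\ref{l_n8} applied with $\hat\nu^2\in(1,\mu)$ in place of $\nu$. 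The ``in particular'' statement follows by the same Borel--Cantelli-style argument as at the end of the proof of Lemma~\ref{l_n11}, bounding $\P(\Vert\Xi_{n+1}\Vert>\nu^{-n}\text{ i.o.})$ by the two contributions above and letting $m\to\infty$. The step I expect to require the most care is pinning down the measurability so that the good event $\{\Vert N_{n+1}\Vert\ge\hat\nu^{2(n+1)}\}$ legitimately lies in $\mathcal{F}_n^\star$ and can be pulled inside the conditional probability; once this is clean, the variance computation is routine and the remainder is a faithful transcription of Lemmas~\ref{l_n8} and~\ref{l_n11}.
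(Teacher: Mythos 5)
Your proof is correct and follows essentially the same route as the paper: a termwise conditional Chebyshev estimate on the good event of a large population, summed over $n$ and combined with the tail bound of Lemma~\ref{l_n8}, exactly mirroring Lemma~\ref{l_n11}, with the Borel--Cantelli-style passage to the ``in particular'' statement. The only (harmless) variation is that you condition on $\mathcal{F}_n^{\star}$, using that $\Vert N_{n+1}\Vert=\sum_i\sum_{k=1}^{N_n^{\ssup i}}Z^{\ssup i}_{n+1,k}$ is $\mathcal{F}_n^{\star}$-measurable, whereas the paper conditions on $\mathcal{F}_n$ and rewrites $\frac{\Vert N_{n+1}\Vert}{\Vert N_n\Vert}|\Xi_{n+1}^{\ssup i}|=\frac{|C_{n+1}^{\ssup{i\ominus 1}}-C_{n+1}^{\ssup{i\oplus 1}}|}{\Vert N_n\Vert}$ on the event $\{\Vert N_n\Vert\ge\hat\nu^{2n}\}$, which merely trades an index shift for a factor $\mu$ in the variance bound.
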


\begin{proof} 
The proof is similar to that of Lemma~\ref{l_n11}, and it suffices 
to prove the first half of the claim%it
coordinatewise. For each $i$, on the event 
$\{\Vert N_m\Vert \ge \hat\nu^{2m}\}$ we have 
\begin{align}
& \P_{\mathcal{F}_m}\Big(\exists n\ge m\text{ such that }
%\frac{\Vert N_{n+1}\Vert}{\Vert N_n\Vert} 
|\Xi^{\ssup i}_{n+1}|>\nu^{-n}\Big)\notag\\
&\le\sum_{n=m}^{\infty}\P_{\mathcal{F}_m}\Big(
\frac{\Vert N_{n+1}\Vert }{\Vert N_n\Vert }|\Xi^{\ssup i}_{n+1}|>\nu^{-n}, \Vert N_n\Vert \ge\hat \nu^{2n}\Big)+\P_{\mathcal{F}_m}\Big(\exists n>m\text{ such that }\Vert N_n\Vert<\hat\nu^{2n}\Big)\notag\\
&= \sum_{n=m}^{\infty}\E_{\mathcal{F}_m}\Big[\1_{\{\Vert N_n\Vert\ge \hat \nu^{2n}\}}
\P_{\mathcal{F}_n}\Big(
\frac{|C_{n+1}^{\ssup{i\ominus 1}}-C_{n+1}^{\ssup{i\oplus 1}}|}{\Vert N_n\Vert }>\nu^{-n}\Big)\Big]+\P_{\mathcal{F}_m}\Big(\exists n>m\text{ such that }\Vert N_n\Vert<\hat\nu^{2n}\Big).
\label{n9}
\end{align}
On the event $\{\Vert N_n\Vert \ge \hat\nu^{2n}\}$ we have by Chebychev's inequality
\begin{align*}
\P_{\mathcal{F}_n}\Big(
\frac{|C_{n+1}^{\ssup{i\ominus 1}}-C_{n+1}^{\ssup{i\oplus 1}}|}{\Vert N_n\Vert }>\nu^{-n}\Big)
\le 2\nu^{2n}\frac{\mu[N_{n}^{\ssup{i\oplus 1}}+N_n^{\ssup{i\ominus 1}}]}{\Vert N_n\Vert^2}
\le \frac{2\mu\nu^{2n}}{\Vert N_n\Vert}\le 2\mu\Big(\frac{\nu}{\hat \nu}\Big)^{2n}.
\end{align*}
Combining this with~\eqref{n9} and Lemma~\ref{l_n8} we obtain  
\begin{align*}
&\1_{\{\Vert N_m\Vert \ge \hat\nu^{2m}\}}\P_{\mathcal{F}_m}\Big(\exists n\ge m\text{ such that }
%\frac{\Vert N_{n+1}\Vert}{\Vert N_n\Vert} 
|\Xi^{\ssup i}_{n+1}|>\nu^{-n}\Big)\notag\\
&\le 2\mu \sum_{n=m}^{\infty}\Big(\frac{\nu}{\hat \nu}\Big)^{2n}
+\1_{\{\Vert N_m\Vert \ge \hat\nu^{2m}\}}\P_{\mathcal{F}_m}\Big(\exists n>m\text{ such that }\Vert N_n\Vert<\hat\nu^{2n}\Big)\to 0
%\label{n10}
\end{align*}
as $m\to\infty$ uniformly on $\Omega$. 
\smallskip

The second statement follows from the first one in the same way as it is done in Lemma~\ref{l_n11}.
\end{proof}

Our next aim is to control the branching- and movement-dependent process $(\Upsilon_{n+1})$ by a processes which only depends on the branching.
Let 
\begin{align*}
\Gamma_{n+1}^{\ssup i}
=\frac{1}{\Vert N_{n}\Vert}\max_{1\le j\le \Vert N_n\Vert}
\Big|\sum_{k=1}^{j}
(Z^{\ssup i}_{n+1,k}-\mu)\Big|.
\end{align*}
Observe that 
\begin{align*}
|\Upsilon_{n+1}^{\ssup i}|
=\Big|\frac{1}{\Vert N_{n+1}\Vert }\sum_{k=1}^{N_n^{\ssup i}}(Z^{\ssup i}_{n+1,k}-\mu)
+\Big(\frac{\mu\Vert N_n\Vert}{\Vert N_{n+1}\Vert }-1\Big)\pi_n^{\ssup i}\Big|
\le \frac{\Vert N_n\Vert}{\Vert N_{n+1}\Vert }\Gamma_{n+1}^{\ssup i}
+\Big|\frac{\mu\Vert N_n\Vert}{\Vert N_{n+1}\Vert }-1\Big|\pi_n^{\ssup i}
\end{align*}
implying 
\begin{align}
\label{upga}
\Vert \Upsilon_{n+1}\Vert \le \Vert \Gamma_{n+1}\Vert + \Big|\frac{\mu\Vert N_n\Vert}{\Vert N_{n+1}\Vert }-1\Big|.
\end{align}
The following lemma shows that if the population is big enough, 
then its branching fluctuations are exponentially small.

\begin{lemma} 
\label{l_ga}
For any $1<\nu<\hat\nu <\mu^{\frac 1 2}$
\begin{align*}
%\label{mu1}
\1_{\{\Vert N_m\Vert \ge \hat\nu^{2m}\}}\P_{\mathcal{F}_m}\Big(\exists n\ge m\text{ \rm such that }
\Vert\Gamma_{n+1}\Vert >\nu^{-n}\Big)
& \to 0
\end{align*}
as $m\to\infty$ uniformly on $\Omega$. In particular, 
$\Vert\Gamma_{n+1}\Vert=O(\nu^{-n})$ almost surely.
\end{lemma}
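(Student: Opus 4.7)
The plan is to follow the same template as Lemmas~\ref{l_n11} and~\ref{l_xi}, but to replace the direct application of Chebychev's inequality by Kolmogorov's maximal inequality, which is the natural tool since $\Gamma_{n+1}^{\ssup i}$ involves a maximum of partial sums rather than a single sum.

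First, I would fix $i \in I$ and condition on $\mathcal{F}_n$. Since $\Vert N_n\Vert$ is $\mathcal{F}_n$-measurable and the random variables $Z^{\ssup i}_{n+1,k}$, $k\in\N$, are independent of $\mathcal{F}_n$ with mean $\mu$ and variance $\text{var}(Z)$, the partial sums $S_j = \sum_{k=1}^j (Z^{\ssup i}_{n+1,k}-\mu)$ form, conditionally, a mean-zero random walk with iid increments having finite variance. Applying Kolmogorov's maximal inequality to the window $1 \le j \le \Vert N_n\Vert$ with threshold $\Vert N_n\Vert \nu^{-n}$ yields
\begin{align*}
\P_{\mathcal{F}_n}\big(\Gamma_{n+1}^{\ssup i} > \nu^{-n}\big) \le \frac{\Vert N_n\Vert \, \text{var}(Z)}{\Vert N_n\Vert^2 \nu^{-2n}} = \frac{\nu^{2n} \, \text{var}(Z)}{\Vert N_n\Vert},
\end{align*}
so on the event $\{\Vert N_n\Vert \ge \hat\nu^{2n}\}$ this bound is at most $\text{var}(Z)(\nu/\hat\nu)^{2n}$, which is geometrically summable in $n$.

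Next, following the decomposition used in Lemma~\ref{l_xi}, on the event $\{\Vert N_m\Vert \ge \hat\nu^{2m}\}$ I would bound
\begin{align*}
\P_{\mathcal{F}_m}\big(\exists n \ge m : \Gamma_{n+1}^{\ssup i} > \nu^{-n}\big)
&\le \sum_{n=m}^{\infty} \E_{\mathcal{F}_m}\big[\1_{\{\Vert N_n\Vert \ge \hat\nu^{2n}\}} \P_{\mathcal{F}_n}\big(\Gamma_{n+1}^{\ssup i}>\nu^{-n}\big)\big] \\
&\quad + \P_{\mathcal{F}_m}\big(\exists n > m : \Vert N_n\Vert < \hat\nu^{2n}\big).
\end{align*}
The first term is bounded by $\text{var}(Z)\sum_{n=m}^\infty (\nu/\hat\nu)^{2n}$, which tends to zero uniformly on $\Omega$ as $m\to\infty$. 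The second term tends to zero uniformly by Lemma~\ref{l_n8} (applied with $\hat\nu^2$ in place of $\nu$, which is admissible since $\hat\nu^2 < \mu$). Summing over the three coordinates $i\in I$ gives the first claim for $\Vert\Gamma_{n+1}\Vert$.

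Finally, for the almost-sure bound $\Vert\Gamma_{n+1}\Vert = O(\nu^{-n})$, I would choose $\nu' \in (\nu, \mu^{1/2})$ and invoke the already-proved uniform bound with $\nu'$ replacing $\nu$: write
\begin{align*}
\P\big(\Vert\Gamma_{n+1}\Vert > (\nu')^{-n} \text{ i.o.}\big)
\le \limsup_{m\to\infty} \Big\{ \E\big[\1_{\{\Vert N_m\Vert \ge \hat\nu^{2m}\}} \P_{\mathcal{F}_m}(\exists n\ge m : \Vert\Gamma_{n+1}\Vert > (\nu')^{-n})\big] + \P\big(\Vert N_m\Vert < \hat\nu^{2m}\big)\Big\},
\end{align*}
both terms on the right vanishing by the uniform statement and Lemma~\ref{l_n8} respectively. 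Since $(\nu')^{-n} = o(\nu^{-n})$ eventually, the claim follows. I do not anticipate any serious obstacle: the only non-routine ingredient is the use of Kolmogorov's maximal inequality to pass from a sum to a running maximum, and the remainder of the argument is a direct transcription of the scheme already established in Lemmas~\ref{l_n8}--\ref{l_xi}.
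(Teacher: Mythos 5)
Your proposal is correct and follows essentially the same route as the paper: the same decomposition over $n\ge m$ with the indicator $\1_{\{\Vert N_n\Vert\ge\hat\nu^{2n}\}}$ and the bad event controlled by Lemma~\ref{l_n8}, and the same key bound $\P_{\mathcal{F}_n}(\Gamma_{n+1}^{\ssup i}>\nu^{-n})\le \nu^{2n}\mathrm{var}(Z)/\Vert N_n\Vert$, since Kolmogorov's maximal inequality is exactly the instance of Doob's submartingale inequality that the paper invokes for the conditional partial-sum martingale. The passage to the almost sure statement is also the paper's argument (your extra $\nu'$ is harmless but unnecessary).
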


\begin{proof} 
The proof is similar to that of Lemma~\ref{l_n11}, and it suffices to prove the first half of the claim coordinatewise. For each 
$i$, on the event $\{\Vert N_m\Vert \ge \hat\nu^{2m}\}$ we have 
\begin{align}
\label{kk1}
&\P_{\mathcal{F}_m}\Big(\exists n\ge m\text{ such that }
\Gamma_{n+1}^{\ssup i}>\nu^{-n}\Big)\notag\\
&\le \sum_{n=m}^{\infty}\E_{\mathcal{F}_m}\Big[\1_{\{\Vert N_n\Vert \ge \hat\nu^{2n}\}}
\P_{\mathcal{F}_n}
\big(\Gamma_{n+1}^{\ssup i}>\nu^{-n}\big)\Big]
+ \P_{\mathcal{F}_m}\Big(\exists n>m\text{ such that }\Vert N_n\Vert<\hat\nu^{2n}\Big). 
\end{align}
On the event $\{\Vert N_n\Vert \ge \hat\nu^{2n}\}$ we have by Doob's submartingale inequality
\begin{align*}
\P_{\mathcal{F}_n}
\big(\Gamma_{n+1}^{\ssup i}>\nu^{-n}\big)\le \frac{\nu^{2n}}{\Vert N_n\Vert}
\text{var}(Z)\le \Big(\frac{\nu}{\hat \nu}\Big)^{2n}\text{var}(Z).
\end{align*}
Combining this with~\eqref{kk1} and Lemma~\ref{l_n8} we obtain  
\begin{align*}
 &\1_{\{\Vert N_m\Vert \ge \hat\nu^{2m}\}}\P_{\mathcal{F}_m}\Big(\exists n\ge m\text{ such that }
\Gamma_{n+1}^{\ssup i}>\nu^{-n}\Big)\\
&\le \text{var}(Z) \sum_{n=m}^{\infty}\Big(\frac{\nu}{\hat \nu}\Big)^{2n}
+\1_{\{\Vert N_m\Vert \ge \hat\nu^{2m}\}}\P_{\mathcal{F}_m}\Big(\exists n>m\text{ such that }\Vert N_n\Vert<\hat\nu^{2n}\Big)
\to 0
\end{align*}
as $m\to\infty$ uniformly on $\Omega$, which is equivalent to the first statement.
\smallskip

The second statement follows from the first one in the same way as it 
is done in Lemma~\ref{l_n11}. 
\end{proof}

In this section and, more importantly, in Sections~\ref{s_notedge} and~\ref{s_main}, we will need the following quantitative 
estimate for sequences that decay exponentially for a while.  

\begin{lemma}
\label{lemma_notgeom}
Let $c\in (0,1), a>0$ and let $(u_n)$ be a positive sequence such that $u_{n+1}\le c u_n +a\nu^{-n}$ for all $m\le n<k\le\infty$. Then, 
for any $\gamma\in (\max\{c, \frac{1}{\nu}\}, 1) $
\begin{align*}
u_{n} 
\le \gamma^{n-m} \Big[u_m+\nu^{-m}\frac{a\nu}{\gamma\nu-1}\Big], \qquad m\le n<k.
\end{align*} 
In particular, if $k=\infty$ then 
$(u_n)$ exponentially decays to zero. 
\end{lemma}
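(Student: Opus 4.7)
The plan is to normalise by dividing through by $\gamma^{n-m}$, which transforms the hypothesis into a recursion whose multiplier is strictly less than $1$. Setting $v_n := \gamma^{-(n-m)}u_n$, so that $v_m = u_m$, the inequality $u_{n+1}\le cu_n+a\nu^{-n}$ rearranges to
\begin{align*}
v_{n+1} \;\le\; \tfrac{c}{\gamma}\,v_n \;+\; a\gamma^{m-1}(\gamma\nu)^{-n},
\end{align*}
after using $\gamma^{-(n+1-m)}\nu^{-n}=\gamma^{m-1}(\gamma\nu)^{-n}$. Since $c<\gamma$ and $v_n\ge 0$ (because $u_n$ is positive), the coefficient $c/\gamma$ may be replaced by $1$, yielding the telescoping-style bound $v_{n+1}-v_n\le a\gamma^{m-1}(\gamma\nu)^{-n}$.

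Next I would iterate from $v_m = u_m$ to obtain
\begin{align*}
v_n \;\le\; u_m + a\gamma^{m-1}\sum_{j=m}^{n-1}(\gamma\nu)^{-j}.
\end{align*}
Because the choice of $\gamma$ forces $\gamma\nu>1$, the partial sum is dominated by the full geometric tail $(\gamma\nu)^{-m}/(1-(\gamma\nu)^{-1})=(\gamma\nu)^{1-m}/(\gamma\nu-1)$. Substituting this in, the powers of $\gamma$ collapse via $\gamma^{m-1}\cdot\gamma^{1-m}=1$, giving $v_n\le u_m+\nu^{-m}\cdot a\nu/(\gamma\nu-1)$, and multiplying back by $\gamma^{n-m}$ produces the stated inequality on $u_n$.

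The ``in particular'' clause is then immediate: when $k=\infty$ the bound applies to all $n\ge m$, the bracketed quantity is independent of $n$, and the prefactor $\gamma^{n-m}$ tends to zero geometrically since $\gamma<1$. This is essentially a discrete Gr\"onwall-type argument and presents no serious obstacle; the only delicate point is keeping the bookkeeping of exponents tidy, in particular making sure that the hypothesis $\gamma>1/\nu$ is used precisely at the step where the geometric series is summed, so that the denominator $\gamma\nu-1$ in the final constant is positive and matches the stated form.
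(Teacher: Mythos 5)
Your argument is correct and is essentially the paper's proof in lightly repackaged form: the paper replaces $c$ by $\gamma$ and iterates $u_{n+1}\le \gamma u_n+a\nu^{-n}$ directly, then sums the geometric series using $\gamma\nu>1$, while your rescaling $v_n=\gamma^{-(n-m)}u_n$ just turns that same iteration into a telescoping sum before summing the identical series. The bookkeeping in your exponents checks out, so nothing further is needed.
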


\begin{proof} 
Since $c\le \gamma$ we have $u_{n+1}\le \gamma u_n +a\nu^{-n}$ for all $m\le n<k$. Iterating, we obtain 
\begin{align*}
u_{n} 
\le \gamma^{n-m} u_m+a\sum_{i=0}^{n-m-1}\gamma^{i}\nu^{-n+i+1},
%= \gamma^{n-m} u_m+a\sum_{i=0}^{n-m-1}\gamma^{n-1-m-i}\nu^{-m-i},
\end{align*}
and the statement now follows by computing the geometric sum and using $\gamma\nu>1$. 
\end{proof}

In the following lemma we show that the ratio between the number of traversals (that is, of the total number of particles in all generations) 
and the number of particles in the most recent generation for the Galton-Watson branching behaves in the same way as for the deterministic 
branching at rate $\mu$. 
This is similar to Lemmas~\ref{l_n11} and~\ref{l_ga} with the 
exception that there is a delay between when the population becomes large 
and when the behaviour of $\rho_n$ becomes nearly deterministic, which is due to $\rho_n$ depending on all previous generations.

\begin{lemma} 
\label{l_rho8}
For any $1<\nu<\hat\nu <\mu^{\frac 1 2}$
there is $\varpi:\N\to \N$ such that $\varpi(m)\ge m$ for all $m$ and 
\begin{align*}
%\label{mu1}
\1_{\{\Vert N_m\Vert \ge \hat\nu^{2m}\}}\P_{\mathcal{F}_m}\Big(\exists n\ge \varpi(m)\text{ \rm such that }
|\rho_{n+1}^{-1}-\rho^{-1}| >\nu^{-n}\Big)
& \to 0
\end{align*}
as $m\to\infty$ uniformly on $\Omega$. In particular, 
$\rho_{n+1}=\rho+O(\nu^{-n})$ almost surely. 
Furthermore, 
%for any $1<\nu<\tilde\nu<\hat\nu <\mu^{\frac 1 2}$
\begin{align*}
\1_{\{\Vert N_m\Vert \ge \hat\nu^{2m}, |\rho_m^{-1}-\rho^{-1}|\le \nu^{-(m-1)}\}}\P_{\mathcal{F}_m}\Big(\exists n\ge m\text{ \rm such that }
|\rho_{n+1}^{-1}-\rho^{-1}| >\nu^{-n}\Big)
& \to 0
\end{align*}
as $m\to\infty$ uniformly on $\Omega$. 
\end{lemma}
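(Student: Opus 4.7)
The plan is to derive a contractive recursion for $u_n := |\rho_n^{-1} - \rho^{-1}|$ driven by exponentially small noise, and then apply Lemma~\ref{lemma_notgeom}. The identity $\Vert T_{n+1}\Vert = \Vert T_n\Vert + \Vert N_{n+1}\Vert$ yields
\[
\rho_{n+1}^{-1} = r_n\,\rho_n^{-1} + 1, \qquad r_n := \frac{\Vert N_n\Vert}{\Vert N_{n+1}\Vert},
\]
while $\rho^{-1} = \mu^{-1}\rho^{-1} + 1$ is its deterministic analogue; subtracting gives
\[
\rho_{n+1}^{-1} - \rho^{-1} = r_n(\rho_n^{-1} - \rho^{-1}) + (r_n - \mu^{-1})\rho^{-1}.
\]

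Next, I would fix $\nu'' \in (\nu, \hat\nu)$ and apply Lemma~\ref{l_n11} to the pair $(\nu'', \hat\nu)$: on $\{\Vert N_m\Vert \ge \hat\nu^{2m}\}$, the conditional probability that $|\mu r_n - 1| \le \nu''^{-n}$ for all $n \ge m$ tends to one uniformly in $\Omega$. On this good event $|r_n - \mu^{-1}| \le \mu^{-1}\nu''^{-n}$ and $r_n \le \mu^{-1}(1 + \nu''^{-n})$, yielding the perturbed contraction $u_{n+1} \le r_n u_n + \mu^{-1}\rho^{-1}\nu''^{-n}$. For the second claim, I would induct on $n \ge m$ with hypothesis $u_n \le \nu^{-(n-1)}$, whose base case is the assumption. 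The inductive step, after dividing by $\nu^{-n}$, reduces to
\[
\nu^n u_{n+1} \le \mu^{-1}\nu + \mu^{-1}\nu\,\nu''^{-n} + \mu^{-1}\rho^{-1}(\nu/\nu'')^n.
\]
Since $\nu < \mu^{1/2}$ forces $\mu^{-1}\nu < 1$, and the remaining two terms vanish as $n \to \infty$ (using $\nu'' > 1$ and $\nu < \nu''$), for all sufficiently large $m$ the right-hand side stays below $1$ for every $n \ge m$, closing the induction.

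For the first claim the hard part is controlling the initial deviation, since $u_m$ is not obviously small on $\{\Vert N_m\Vert \ge \hat\nu^{2m}\}$. The crucial observation is that $\P(Z=0)=0$ forces $Z \ge 1$ a.s., so $\Vert N_n\Vert$ is non-decreasing; thus $\sum_{k=1}^{m}\Vert N_k\Vert \le m\Vert N_m\Vert$, giving $\rho_m^{-1} \le m + \Vert T_0\Vert/\Vert N_m\Vert$ and hence $u_m \le m + C_0$ on that event. Fix any $\gamma_0 \in (1/\nu'', 1/\nu)$ (nonempty since $\nu < \nu''$); for $m$ large, $\mu^{-1}(1 + \nu''^{-m}) < \gamma_0$ because $\nu < \mu^{1/2}$, so Lemma~\ref{lemma_notgeom} applied with $c = \mu^{-1}(1 + \nu''^{-m})$, $a = \mu^{-1}\rho^{-1}$, exponent $\nu''$ and $\gamma = \gamma_0$ gives
\[
u_n \le \gamma_0^{n-m}(u_m + C_1) \le \gamma_0^{n-m}(m + C_2), \qquad n \ge m.
\]
Requiring $u_{n+1} \le \nu^{-n}$ translates into $(\gamma_0\nu)^{n-m} \le C_3/(m\nu^m)$, which, since $\gamma_0\nu < 1$, holds as soon as $n - m$ exceeds a linear function of $m$. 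Defining $\varpi(m)$ to be such a linear function and combining with the uniform convergence from Lemma~\ref{l_n11} delivers the first statement.

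The ``in particular'' conclusion $\rho_{n+1} = \rho + O(\nu^{-n})$ a.s.\ is then deduced exactly as in Lemma~\ref{l_n11}, by letting $m \to \infty$ and using Lemma~\ref{l_n8}, together with $|\rho_{n+1} - \rho| = \rho\rho_{n+1}|\rho_{n+1}^{-1} - \rho^{-1}|$ and the boundedness of $\rho_{n+1}$. The principal structural difficulty throughout is precisely the linear delay $\varpi(m)$: only a polynomial bound on $u_m$ is available from the non-decreasing population, so the exponential contraction needs to run for a duration proportional to $m$ before overcoming this polynomial factor, and this is what distinguishes Lemma~\ref{l_rho8} from its predecessors Lemmas~\ref{l_n11} and~\ref{l_ga}.
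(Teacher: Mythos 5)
Your proposal is correct and follows essentially the same route as the paper's proof: the same recursion for $u_n=|\rho_n^{-1}-\rho^{-1}|$ with multiplicative factor $\Vert N_n\Vert/\Vert N_{n+1}\Vert$, the same good event supplied by Lemma~\ref{l_n11} with an intermediate parameter in $(\nu,\hat\nu)$, the same crude bound $u_m=O(m)$ from the non-decreasing population, the same application of Lemma~\ref{lemma_notgeom}, and the same linear-in-$m$ choice of the delay $\varpi(m)$ to beat the factor $m\nu^m$. The only cosmetic difference is that you prove the ``furthermore'' part by a direct induction instead of reusing the geometric bound with the improved initial estimate $|u_m|\le\nu^{-(m-1)}$.
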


\begin{proof} 
We assume that $m$ is sufficiently large. 
For all $n\in\N$ we have
\begin{align*}
\rho_{n+1}^{-1}=\frac{\Vert T_{n+1}\Vert }{\Vert N_{n+1}\Vert }
=\frac{\Vert N_{n+1}\Vert + \Vert T_n\Vert}{\Vert N_{n+1}\Vert}
=1+\rho_{n}^{-1}\frac{\Vert N_{n}\Vert }{\Vert N_{n+1}\Vert }.
\end{align*}
Substituting $u_n= \rho_n^{-1}-\rho^{-1}$ we obtain 
\begin{align*}
u_{n+1}
=u_{n}\frac{\Vert N_{n}\Vert }{\Vert N_{n+1}\Vert }
+\frac{1}{\mu-1}\Big(\frac{\mu\Vert N_{n}\Vert }{\Vert N_{n+1}\Vert }-1
\Big).
\end{align*}
Let $\bar\nu\in (\nu,\hat\nu)$ and $c\in (\frac 1 \mu, \frac{1}{\bar \nu})$. Consider the event 
\begin{align*}
\mathcal{E}_m=\Big\{\frac{\Vert N_{n}\Vert }{\Vert N_{n+1}\Vert }\le c
\text{ and }\Big|\frac{\mu\Vert N_{n}\Vert }{\Vert N_{n+1}\Vert }-1
\Big|<\bar\nu^{-n}
\text{ for all }n\ge m\Big\}.
\end{align*}
On the event $\mathcal{E}_m$ we have for all $n\ge m$
\begin{align*}
|u_{n+1}|\le c|u_n|+\frac{\bar\nu^{-n}}{\mu-1}.
\end{align*}
By Lemma~\ref{lemma_notgeom} with some $\gamma\in (\frac{1}{\bar\nu}, \frac 1 \nu)$ we obtain 
\begin{align}
\label{ooo1}
|u_{n+1}|\le \gamma^{n+1-m}\Big[|u_m|+\bar\nu^{-m}\frac{\bar\nu}{(\mu-1)(\gamma\bar\nu-1)}\Big].
\end{align}
If we have no information about $u_m$ we can simply estimate it as $|u_m|\le 2m$, 
which leads to $|u_{n+1}|\le 3m\gamma^{n-m}$. 
Since $\gamma\nu<1$, for each $m$ we can choose $\varpi(m)$ sufficiently 
large to ensure $3m(\gamma\nu)^{\varpi(m)}\gamma^{-m}\le 1$. This implies 
for all $n\ge\varpi(m)$
\begin{align*}
|u_{n+1}|
\le 3m(\gamma\nu)^n\gamma^{-m}\nu^{-n}
\le 3m(\nu\gamma)^{\varpi(m)}\gamma^{-m}\nu^{-n}\le \nu^{-n}.
\end{align*}
on the event $\mathcal{E}_m$. This implies 
\begin{align*}
\1_{\{\Vert N_m\Vert \ge \hat\nu^{2m}\}}\P_{\mathcal{F}_m}\Big(\exists n\ge \varpi(m)\text{ \rm such that }
|u_{n+1}| >\nu^{-n}\Big)
\le \1_{\{\Vert N_m\Vert \ge \hat\nu^{2m}\}}\P_{\mathcal{F}_m}(\mathcal{E}_m^c)\to 0
\end{align*}
uniformly on $\Omega$ by Lemma~\ref{l_n11}. 
\smallskip

If we know that $|u_m|\le \nu^{-(m-1)}$ then~\eqref{ooo1} together with 
$\gamma\nu<1$ and $\nu<\bar\nu$ imply 
for all $n\ge m$
\begin{align*}
|u_{n+1}|\le \Big[(\gamma\nu)^{n+1-m}+(\gamma\nu)^n(\gamma\bar\nu)^{-m}\frac{\gamma\bar\nu}{(\mu-1)(\gamma\bar\nu-1)}\Big]\nu^{-n}
\le \Big[\gamma\nu + \Big(\frac{\nu}{\bar\nu}\Big)^{m}
\frac{\gamma\bar\nu}{(\mu-1)(\gamma\bar\nu-1)}\Big]\nu^{-n}
\le \nu^{-n}
\end{align*}
on the event $\mathcal{E}_m\cap \big\{|u_m|\le \nu^{-(m-1)}\big\}$.
This implies 
\begin{align*}
\1_{\{\Vert N_m\Vert \ge \hat\nu^{2m}, |u_m|\le \nu^{-(m-1)}\}}\P_{\mathcal{F}_m}\Big(\exists n\ge m\text{ \rm such that }
|u_{n+1}| >\nu^{-n}\Big)
& \le 
\1_{\{\Vert N_m\Vert \ge \hat\nu^{2m}\}}\P_{\mathcal{F}_m}(\mathcal{E}_m^c)\to 0
\end{align*}
uniformly on $\Omega$ by Lemma~\ref{l_n11}.
\smallskip

Finally, $\rho_{n+1}=\rho+O(\nu^{-n})$ follows from the
first statement similarly to Lemma~\ref{l_n11}.
\end{proof}

\begin{prop} 
\label{p_error}
For any $\nu\in (1,\mu^{\frac 1 2})$, $\Vert R_{n+1}\Vert,  \Vert S_{n+1}\Vert, \Vert U_{n+1}\Vert, \Vert W_{n+1}\Vert= O(\nu^{-n})$ almost surely.
\end{prop}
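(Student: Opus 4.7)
The plan is to assemble the proposition from the error bounds collected in the preceding lemmas, using nothing more than the triangle inequality, the boundedness of $\pi_n, \Theta_n$ on $\Sigma$, and the boundedness of the transition probabilities $P_n^{\ssup i}, \bar P_n^{\ssup i} \in [0,1]$.

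\textbf{Step 1: control $\Upsilon_{n+1}$.} First I would fix $\nu\in(1,\mu^{\frac 1 2})$ and choose $\nu'\in(\nu,\mu^{\frac 1 2})$ so that I can apply Lemmas~\ref{l_n11} and~\ref{l_ga} with $\nu'$ in place of $\nu$ (both conclusions give $O((\nu')^{-n})$, which is $o(\nu^{-n})$). Plugging the a.s.\ bounds $\Vert\Gamma_{n+1}\Vert=O((\nu')^{-n})$ and $|\mu\Vert N_n\Vert/\Vert N_{n+1}\Vert-1|=O((\nu')^{-n})$ into the pointwise inequality~\eqref{upga} then yields $\Vert \Upsilon_{n+1}\Vert = O(\nu^{-n})$ almost surely.

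\textbf{Step 2: control $R_{n+1}$ and $S_{n+1}$.} Using the explicit expression for $R_{n+1}^{\ssup i}$ together with $P_n^{\ssup i},\bar P_n^{\ssup i}\in[0,1]$, I get
\begin{equation*}
\Vert R_{n+1}\Vert \le 2\Vert \Upsilon_{n+1}\Vert + \Vert \Xi_{n+1}\Vert,
\end{equation*}
which is $O(\nu^{-n})$ a.s.\ by Step 1 and Lemma~\ref{l_xi} (applied with $\nu'$ as above). For $S_{n+1}$, Lemma~\ref{l_rho8} gives $\rho_{n+1}\to\rho\in(0,1)$ almost surely so $\rho_{n+1}$ is a.s.\ bounded, and $|\rho_{n+1}-\rho|=O(\nu^{-n})$. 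Since each of $\pi_n^{\ssup i}, \pi_{n+1}^{\ssup i}, \Theta_n^{\ssup i}$ lies in $[0,1]$, the bracket $[1-\pi_n^{\ssup i}-\pi_{n+1}^{\ssup i}-\Theta_n^{\ssup i}]$ is uniformly bounded, so
\begin{equation*}
\Vert S_{n+1}\Vert \le 2\rho_{n+1}\Vert \Upsilon_{n+1}\Vert + 3|\rho_{n+1}-\rho|= O(\nu^{-n})
\end{equation*}
almost surely.

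\textbf{Step 3: conclude for $U_{n+1}$ and $W_{n+1}$.} By Lemma~\ref{l_uw}, $U_{n+1}$ and $W_{n+1}$ are fixed deterministic linear combinations of $R_{n+1}$ and $S_{n+1}$, so $\Vert U_{n+1}\Vert,\Vert W_{n+1}\Vert =O(\nu^{-n})$ a.s.\ follows immediately from Step 2.

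No step is really the main obstacle here: all substantive work has been done in Lemmas~\ref{l_n8}--\ref{l_rho8}, and the proposition just bundles those a.s.\ exponential bounds through the algebraic identities of Proposition~\ref{p_split} and Lemma~\ref{l_uw}. The only mild care needed is to upgrade $\nu$ to an intermediate $\nu'<\mu^{\frac 1 2}$ before invoking the previous lemmas, so that after combining finitely many $O((\nu')^{-n})$ terms we still obtain $O(\nu^{-n})$.
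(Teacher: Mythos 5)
Your proposal is correct and follows the same route as the paper, which proves Proposition~\ref{p_error} by combining~\eqref{upga} with Lemmas~\ref{l_uw}, \ref{l_n11}, \ref{l_xi}, \ref{l_ga} and~\ref{l_rho8} exactly as you do; you have simply written out the triangle-inequality bookkeeping the paper leaves implicit. The intermediate $\nu'$ is not even needed, since each cited lemma already holds for every $\nu\in(1,\mu^{\frac 1 2})$ and a finite sum of $O(\nu^{-n})$ terms is again $O(\nu^{-n})$.
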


\begin{proof} This follows immediately from~\eqref{upga} and Lemmas~\ref{l_uw}, ~\ref{l_n11}, ~\ref{l_xi}, \ref{l_ga} and \ref{l_rho8}.
\end{proof}

%\smallskip
%We now fix $\nu\in (1,\mu^{\frac 1 3})$ for the remainder of the paper.
\bigskip

%%%%%%%%%%%%%%%%%%%%%%%%%%%%%%%%%%%%%%%%%
%%%%%%%%%%%%%%%%%%%%%%%%%%%%%%%%%%%%%%%%%
%%%%%%%%%%%%%%%%%%%%%%%%%%%%%%%%%%%%%%%%%

\section{Distance from the equilibrium}
\label{s_prelim}

Observe that $v_n=\pi_n-q_{[\Theta_n]}$ can be seen as a measure of how much the sequence $(\Theta_n,\pi_n)$ is out of the equilibrium state. In this section we 
will prove Lemma~\ref{lemma_l} by showing that a norm of the 
deterministic transformation from $v_n$ to $v_{n+1}$ is bounded by one, and the random perturbations are negligible.
\smallskip

Recall that $v_n\in \mathcal{L}$, which is an invariant plane under $M_p$. For any three-dimensional square matrix $A$ having $\mathcal{L}$ as an invariant plane, we denote by $\vertiii{A}$ the norm of the restriction of the operator $A$ to the plane $\mathcal{L}$
generated by the $\ell_1$-norm on $\R^3$, that is, 
$$\vertiii{A}=\sup_{r\in \mathcal{L}: \Vert r\Vert =1}\Vert Ar\Vert .$$

For any $p=(x,y,z)\in\Sigma\setminus V$, we denote 
\begin{align*}
\chi(p)=\min\Big\{\frac{x}{x+y}, \frac{y}{x+y}, \frac{x}{x+z}, \frac{z}{x+z}, \frac{y}{y+z}, \frac{z}{y+z}\Big\}.
\end{align*}

\begin{lemma} 
\label{l_norm}
$\vertiii{M_p}=1-\chi(p)$ for any $p\in\Sigma\setminus V$. 
\end{lemma}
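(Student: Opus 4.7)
My plan is to reduce the computation of the operator norm to a finite optimisation over extreme points of the unit ball of $\ell_1$ restricted to $\mathcal{L}$, and then to evaluate this maximum by direct case analysis.

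The first step is to identify the unit ball of $\mathcal{L}$. A vector $r=(r_1,r_2,r_3)\in \mathcal{L}$ with $\Vert r\Vert =1$ has its coordinates summing to zero, so exactly one of them has a different sign from the other two (up to boundary cases where one coordinate vanishes). A short sign case analysis shows that the extreme points of the unit sphere $\{r\in\mathcal{L}:\Vert r\Vert =1\}$ are precisely the six vectors $\pm\frac 1 2(e_i-e_j)$ with $i\neq j$, and consequently
\begin{align*}
\vertiii{M_p}=\max_{i\neq j}\tfrac 1 2 \Vert M_p(e_i-e_j)\Vert.
\end{align*}
Notice also that each column of $M_p$ sums to $1$, so $M_p$ is column-stochastic and hence preserves $\mathcal{L}$, which is consistent with Lemma~\ref{la0} and ensures $\vertiii{M_p}\le 1$.

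Next I would compute $\frac 1 2\Vert M_p(e_i-e_j)\Vert$ for each of the three unordered pairs $\{i,j\}$. Writing $p=(x,y,z)$, the difference $M_p(e_1-e_2)$ has coordinates $(-\frac{z}{x+z},\frac{z}{y+z},\frac{y}{y+z}-\frac{x}{x+z})$. The first two entries have known signs; the sign of the third is determined by whether $y\ge x$ or $y<x$, since $\frac{y}{y+z}-\frac{x}{x+z}=\frac{z(y-x)}{(x+z)(y+z)}$. A direct computation in each of the two sub-cases yields
\begin{align*}
\tfrac 1 2\Vert M_p(e_1-e_2)\Vert=\max\Big\{\tfrac{z}{x+z},\tfrac{z}{y+z}\Big\}=1-\min\Big\{\tfrac{x}{x+z},\tfrac{y}{y+z}\Big\},
\end{align*}
and the two remaining pairs give the analogous expressions by cyclic symmetry of $M_p$ in $(x,y,z)$.

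Finally, taking the maximum over the three pairs yields
\begin{align*}
\vertiii{M_p}=1-\min\Big\{\tfrac{x}{x+y},\tfrac{y}{x+y},\tfrac{x}{x+z},\tfrac{z}{x+z},\tfrac{y}{y+z},\tfrac{z}{y+z}\Big\}=1-\chi(p),
\end{align*}
exactly as claimed. The only delicate point is the sign case analysis in the middle coordinate of each $M_p(e_i-e_j)$; once these two sub-cases are handled the telescoping cancellation makes the result collapse to a single ratio of the form $\frac{\cdot}{\cdot}$. Nothing else is conceptually difficult, though symmetry should be invoked rather than repeating the algebra three times.
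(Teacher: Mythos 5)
Your proposal is correct and follows essentially the same route as the paper: both reduce $\vertiii{M_p}$ by convexity to the six vertices $\pm\frac12(e_i-e_j)$ of the hexagon $\mathcal{L}\cap\{\Vert r\Vert=1\}$, compute one representative pair explicitly (obtaining $1-\min\{\tfrac{x}{x+z},\tfrac{y}{y+z}\}$), and conclude by symmetry. The sign analysis and the identity $\max_{i\neq j}\big(1-\min S_{ij}\big)=1-\chi(p)$ are carried out correctly.
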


\begin{proof} 
It is easy to see that the intersection of $\mathcal{L}$ with the set $\{v\in \R^3: \Vert v\Vert =1\}$ is the regular hexagon 
with the vertices $V_{\text{hex}}=\{(\frac 1 2, -\frac 1 2, 0), (-\frac 1 2, \frac 1 2, 0), (\frac 1 2, 0, -\frac 1 2), (-\frac 1 2, 0, \frac 1 2),
(0, \frac 1 2, -\frac 1 2), (0, -\frac 1 2, \frac 1 2)\}$.  By convexity of the norm
\begin{align}
\label{hex}
\vertiii{M_{p}}=\sup_{v\in V_{\text{hex}}}\Vert M_{p}v\Vert .
\end{align}
%where $V_{\text{hex}}$ denotes the set of vertices of the hexagon.  
%By symmetry $\Vert M_p\Vert _{1,0}=M_pr$, where $r=(\frac 1 2, -\frac 1 2, 0)$. 
Denote $p=(x,y,z)$. For  $v=(\frac 1 2, -\frac 1 2, 0)$ we have 
$$
\Vert M_{p} v\Vert 
%=1-\frac{1}{2}\Big(\frac{x}{x+z}+\frac{y}{y+z}-\Big|\frac{x}{x+z}-\frac{y}{y+z}\Big|\Big)
= 1-\min\Big\{\frac{x}{x+z},\frac{y}{y+z}\Big\}, 
$$
and the statement  now follows from~\eqref{hex} by symmetry.
\end{proof}
\smallskip

%The previous two lemmas enable us to prove that $\Vert v_n\Vert $ converges as it is, up to a negligible error, a decreasing sequence
%due to  $\vertiii{(1-\frac\rho 2)M_p-\frac\rho 2 I}$ being bounded by one.

\begin{proof}[Proof of Lemma~\ref{lemma_l}] 
Let $\nu\in (1,\mu^{\frac 1 2})$. By Proposition~\ref{p_error} we can choose $m$ sufficiently large so that $\Vert U_{n+1}\Vert \le \nu^{-n}$ for all $n\ge m$. 
Denote
\begin{align*}
P_{n,m}=\prod_{i=1}^{n-m}\Big[\Big(1-\frac{\rho}{2}\Big)M_{\Theta_{n-i}}-\frac{\rho}{2}I\Big].
\end{align*}
Iterating~\eqref{iter_v} we obtain for all $n>m$
\begin{align}
\label{ab}
v_n=P_{n,m}v_m+\sum_{i=m+1}^n P_{n,i}U_i. 
\end{align}
Denote $\hat v_n=P_{n,m}v_m$ for all $n>m$. 
It follows from Lemma~\ref{l_norm} that
$\vertiii{(1-\frac{\rho}{2})M_{\Theta_{n}}-\frac{\rho}{2}I}\le 1$ and hence 
$\Vert \hat v_{n+1}\Vert \le \Vert \hat v_{n}\Vert $ for all $n$
%$\Vert P_{n,m}\Vert _0\le 1$. 
%\begin{align*}
%\Vert \hat v_{n+1}\Vert =\Big|\Big|\Big[\Big(1-\frac{\rho}{2}\Big)M_{\Theta_{n}}-\frac{\rho}{2}I\Big]\hat v_{n}\Big|\Big|\le \Vert \hat v_{n}\Vert 
%\end{align*}
implying convergence of the sequence $\Vert \hat v_n\Vert $. Lemma~\ref{l_norm} also yields that $\vertiii{P_{n,i}}\le 1$, and hence 
by~\eqref{ab} for any $n, k> m$ we have 
\begin{align*}
&\big\Vert |v_n\Vert -\Vert v_k\Vert \big|\le \Vert v_n-\hat v_n\Vert +\big\Vert |\hat v_n\Vert -\Vert \hat v_k\Vert \big|+\Vert \hat v_k-v_k\Vert 
\le \big\Vert |\hat v_n\Vert -\Vert \hat v_k\Vert \big|+\frac{2}{\nu-1}\nu^{-m}.
\end{align*}
Hence, for any $\e>0$, $m$ can be chosen large enough to ensure that $\big\Vert |v_n\Vert -\Vert v_k\Vert \big|<\e$ eventually. 
\end{proof}

%\nnote{I replaced 
%$d=\limsup\limits_{n\to\infty} \text{dist}(\Theta_n,\partial\Sigma)$
%with $\chi^{\star}$ and changed the notation from $\chi$ to $\chi_{\star}$}

Denote 
%$\begin{align*}
%d=\limsup_{n\to\infty} \text{dist}(\Theta_n,\partial\Sigma),
%\end{align*}
%where $\text{dist}(p,\partial\Sigma)=\inf\limits_{\hat p\in \partial\Sigma} \Vert p-\hat p\Vert $. Further, denote 
\begin{align*}
\chi_{\star}=\liminf_{n\to\infty}\chi(\Theta_n)
\qquad\text{and}\qquad
\chi^{\star}=\limsup_{n\to\infty}\chi(\Theta_n). 
\end{align*}
\medskip

%%%%%%%%%%%%%%%%%%%%%%%%%%%%%%%%%%%%%%%%%
%%%%%%%%%%%%%%%%%%%%%%%%%%%%%%%%%%%%%%%%%
%%%%%%%%%%%%%%%%%%%%%%%%%%%%%%%%%%%%%%%%%

\section{Orbits not converging to an edge}
\label{s_notedge}

In this section we deal with the cases where we can benefit from  the sequence $(\Theta_n,\pi_n)$ 
staying long enough in the region where $\vertiii{M_p}$ is separated from one, hence leading to the geometric decay of $\Vert v_n\Vert $.

\begin{prop} 
\label{p_eq}
Almost surely on the event $\{\chi_{\star}>0\}$ the sequence $(\Theta_n,\pi_n)$ converges to an 
equilibrium point $(\Theta,q_{[\Theta]})$, where  $\Theta\in\Sigma\setminus \partial\Sigma$.  
\end{prop}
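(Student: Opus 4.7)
The plan is to exploit the fact that $\chi_\star>0$ forces the operator norm $\vertiii{M_{\Theta_n}}$ to be uniformly bounded away from $1$ from some point onwards, which by the iteration for $v_n$ gives geometric contraction and hence summable increments for $\Theta_n$.

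I would work pointwise on $\{\chi_\star>0\}$. Fix $0<c<\chi_\star$; then there exists $N$ such that $\chi(\Theta_n)\ge c$ for all $n\ge N$, and by Lemma~\ref{l_norm} and the triangle inequality applied to the operator $(1-\tfrac{\rho}{2})M_{\Theta_n}-\tfrac{\rho}{2}I$ restricted to the invariant plane $\mathcal{L}$, one has
\begin{align*}
\vertiii{\big(1-\tfrac{\rho}{2}\big)M_{\Theta_n}-\tfrac{\rho}{2}I}
\le \big(1-\tfrac{\rho}{2}\big)(1-\chi(\Theta_n))+\tfrac{\rho}{2}
\le 1-\big(1-\tfrac{\rho}{2}\big)c=:\gamma<1
\end{align*}
for all $n\ge N$. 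Fix any $\nu\in(1,\mu^{1/2})$ with $\gamma\nu>1$; by Proposition~\ref{p_error} we can enlarge $N$ so that $\Vert U_{n+1}\Vert\le a\nu^{-n}$ for all $n\ge N$, for some constant $a$. Taking norms in \eqref{iter_v} and using $v_n\in\mathcal{L}$, we get $\Vert v_{n+1}\Vert\le\gamma\Vert v_n\Vert+a\nu^{-n}$ for all $n\ge N$.

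Applying Lemma~\ref{lemma_notgeom} then yields $\Vert v_n\Vert\le C\gamma^n$ for some constant $C=C(\omega)$, and in particular $v_n\to 0$ exponentially fast, which forces $\ell=0$. Plugging this back into \eqref{iter_incr} together with the bound $\Vert W_{n+1}\Vert=O(\nu^{-n})$ from Proposition~\ref{p_error}, and noting that $\vertiii{M_{\Theta_n}+I}$ remains bounded, we obtain that $\Vert\Theta_{n+1}-\Theta_n\Vert$ is summable. Hence $(\Theta_n)$ is Cauchy and converges to some $\Theta\in\Sigma$.

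Finally, continuity of $\chi$ on $\Sigma\setminus V$ together with $\chi(\Theta_n)\ge c$ for large $n$ gives $\chi(\Theta)\ge c>0$, which forces $\Theta\in\Sigma\setminus\partial\Sigma$ since $\chi$ vanishes on $\partial\Sigma$. Continuity of $p\mapsto q_{[p]}$ then gives $q_{[\Theta_n]}\to q_{[\Theta]}$, and combining with $v_n\to 0$ yields $\pi_n\to q_{[\Theta]}$, so $(\Theta_n,\pi_n)\to(\Theta,q_{[\Theta]})$ as desired. The only delicate point is the first step — ensuring that $\chi_\star>0$ can genuinely be transferred into a \emph{uniform} contraction constant on the restriction to $\mathcal{L}$, which is where Lemma~\ref{l_norm} does the crucial work; once that is in place the rest is a routine application of Lemma~\ref{lemma_notgeom} and Proposition~\ref{p_error}.
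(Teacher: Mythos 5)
Your argument is correct and is essentially the paper's own proof: Lemma~\ref{l_norm} gives a uniform contraction of $(1-\tfrac{\rho}{2})M_{\Theta_n}-\tfrac{\rho}{2}I$ on $\mathcal{L}$ once $\chi(\Theta_n)$ is bounded below, then \eqref{iter_v}, Proposition~\ref{p_error} and Lemma~\ref{lemma_notgeom} give exponential decay of $\Vert v_n\Vert$, and \eqref{iter_incr} gives summable increments of $(\Theta_n)$, with the limit interior and $\pi_n\to q_{[\Theta]}$ exactly as in the paper. One cosmetic remark: your requirement that $\nu\in(1,\mu^{1/2})$ satisfy $\gamma\nu>1$ may be unsatisfiable when $\mu$ is close to $1$, but this is harmless --- apply Lemma~\ref{lemma_notgeom} as stated, which picks the decay rate in $(\max\{\gamma,\tfrac{1}{\nu}\},1)$, and exponential decay of $\Vert v_n\Vert$ follows all the same.
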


\begin{proof} By Lemma~\ref{l_norm} we have 
$\vertiii{M_{\Theta_n}}\le 1-\frac{1}{2}\chi_{\star}$ eventually for all $n$. By~\eqref{iter_v} 
%and using $\rho_{n+1}\in (0,1)$ 
we have for all $n$
\begin{align}
\Vert v_{n+1}\Vert 
&\le 
\vertiii{\Big(1-\frac{\rho}{2}\Big)M_{\Theta_n}-\frac{\rho}{2}I}\cdot \Vert v_n\Vert +\Vert U_{n+1}\Vert 
\le c\Vert v_n\Vert +\Vert U_{n+1}\Vert ,
\label{pr_v}
\end{align}
where $c=(1-\frac{\rho}{2})(1-\frac{1}{2}\chi_{\star})+\frac{\rho}{2}\in (0,1)$.
By Lemma~\ref{lemma_notgeom} and Proposition~\ref{p_error} $\Vert v_n\Vert $ exponentially decays to zero. By~\eqref{iter_incr} and Lemma~\ref{l_norm}
we also have 
\begin{align*}
\Vert \Theta_{n+1}-\Theta_n\Vert \le 2 \Vert v_n\Vert +\Vert W_{n+1}\Vert 
\end{align*} 
implying by Proposition~\ref{p_error} that $(\Theta_n)$ converges exponentially to some $\Theta$, which is is an internal point due to $\chi_{\star}>0$. 
Finally, $\pi_n\to q_{[\Theta]}$
follows from $\Vert v_n\Vert \to 0$ and $\Theta_n\to \Theta$. 
\end{proof}

\begin{prop}
\label{p_dpos_lpos}
$\P(\chi^{\star}>0, \ell>0)=0$.
\end{prop}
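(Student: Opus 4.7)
The plan is to show that on $\{\chi^{\star}>0\}$ the sequence $\Vert v_n\Vert$ is driven to zero, thereby forcing $\ell=0$. The strategy mirrors that of Proposition~\ref{p_eq}, but with a crucial weakening: where the hypothesis $\chi_{\star}>0$ there gives contraction in~\eqref{iter_v} at every sufficiently large~$n$, here $\chi^{\star}>0$ only guarantees contraction along an infinite subsequence of indices. Nevertheless, by Lemma~\ref{l_norm} the iteration matrices are non-expansive on $\mathcal{L}$ at every index, so a finite product of them contracts as soon as infinitely many factors strictly contract.

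First I would reduce to a deterministic threshold by writing $\{\chi^{\star}>0\}=\bigcup_{k\in\N}\{\chi^{\star}>1/k\}$; it suffices to prove $\P(\chi^{\star}>\delta,\,\ell>0)=0$ for each fixed $\delta>0$. Working pointwise on $\{\chi^{\star}>\delta\}$, the definition of $\limsup$ yields infinitely many indices $n$ with $\chi(\Theta_n)>\delta$, and at each such $n$ Lemma~\ref{l_norm} and the triangle inequality give
\[
\vertiii{\Big(1-\tfrac{\rho}{2}\Big)M_{\Theta_n}-\tfrac{\rho}{2}I}\le \Big(1-\tfrac{\rho}{2}\Big)(1-\delta)+\tfrac{\rho}{2}=:c_{\delta}<1,
\]
while at every other $n$ this same norm is bounded by $1$.

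Next I would invoke the representation $v_n=P_{n,m}v_m+\sum_{i=m+1}^nP_{n,i}U_i$ from the proof of Lemma~\ref{lemma_l}. Fix $\nu\in(1,\mu^{1/2})$; by Proposition~\ref{p_error} there is a (random) constant $K$ with $\Vert U_i\Vert\le K\nu^{-i}$ for all $i$, and since $\vertiii{P_{n,i}}\le 1$ by Lemma~\ref{l_norm},
\[
\Vert v_n\Vert\le \vertiii{P_{n,m}}\,\Vert v_m\Vert+\sum_{i=m+1}^n\Vert U_i\Vert\le \vertiii{P_{n,m}}\,\Vert v_m\Vert+\frac{K\nu^{-m}}{\nu-1}.
\]
On $\{\chi^{\star}>\delta\}$, submultiplicativity of $\vertiii{\cdot}$ gives $\vertiii{P_{n,m}}\le c_{\delta}^{k(n,m)}$, where $k(n,m)$ counts the indices $j\in\{m,\ldots,n-1\}$ with $\chi(\Theta_j)>\delta$; since these are infinite in number, $k(n,m)\to\infty$ as $n\to\infty$, so $\vertiii{P_{n,m}}\to 0$. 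Letting $n\to\infty$ with $m$ fixed yields $\limsup_n\Vert v_n\Vert\le K\nu^{-m}/(\nu-1)$, and then letting $m\to\infty$ gives $\ell=\lim_n\Vert v_n\Vert=0$, contradicting $\ell>0$. A countable union over rational $\delta>0$ completes the proof.

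The main obstacle is essentially bookkeeping: the constants $c_{\delta}$ and $K$ and the positions of the contracting indices are all random, so the argument must remain pointwise valid on the appropriate sample subset. The decomposition into the countable family $\{\chi^{\star}>\delta\}$ combined with the almost-sure bound from Proposition~\ref{p_error} handles this cleanly, so no new technical ideas appear to be required beyond the non-expansiveness from Lemma~\ref{l_norm} and the counting $k(n,m)\to\infty$.
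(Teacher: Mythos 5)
Your argument is correct, but it reaches the conclusion by a different mechanism than the paper. You iterate the recursion~\eqref{iter_v} in full, via the product representation $v_n=P_{n,m}v_m+\sum_{i=m+1}^{n}P_{n,i}U_i$ from the proof of Lemma~\ref{lemma_l}, combining non-expansiveness of every factor with strict contraction (norm at most $c_{\delta}<1$) at the infinitely many indices where $\chi(\Theta_n)>\delta$, so that $\vertiii{P_{n,m}}\to 0$, hence $\ell\le K\nu^{-m}/(\nu-1)$ for every $m$ and thus $\ell=0$; the countable decomposition $\{\chi^{\star}>0\}=\bigcup_k\{\chi^{\star}>1/k\}$ handles the randomness of the threshold. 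The paper is more economical: it picks a subsequence $(n_k)$ with $\chi(\Theta_{n_k})>\tfrac12\chi^{\star}$, applies the one-step bound $\Vert v_{n_k+1}\Vert\le c\Vert v_{n_k}\Vert+\Vert U_{n_k+1}\Vert$ with $c=(1-\tfrac{\rho}{2})(1-\tfrac12\chi^{\star})+\tfrac{\rho}{2}<1$, and lets $k\to\infty$: since $\Vert v_n\Vert\to\ell$ (Lemma~\ref{lemma_l}) and the perturbations vanish (Proposition~\ref{p_error}), this yields $\ell\le c\ell$, contradicting $\ell>0$, with no products, no submultiplicativity and no $\delta$-reduction needed because the whole argument is pointwise. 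Both proofs rest on the same ingredients (Lemma~\ref{l_norm}, the iteration~\eqref{iter_v}, Proposition~\ref{p_error}, and Lemma~\ref{lemma_l}); yours trades a little extra bookkeeping for the slightly stronger observation that the accumulated contraction alone drives $\Vert v_n\Vert$ to zero, whereas the paper exploits the already-established convergence of $\Vert v_n\Vert$ to shortcut to a contradiction. The only points worth keeping explicit in your write-up are that $U_i\in\mathcal{L}$ and that $\mathcal{L}$ is invariant under each factor, so the restricted norm $\vertiii{\cdot}$ and its submultiplicativity apply; both facts are already used in the proof of Lemma~\ref{lemma_l}, so there is no gap.
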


\begin{proof}
Let $(n_k)$ be a subsequence such that $\chi(\Theta_{n_k})>\frac{1}{2}\chi^{\star}$ for all $k$. 
By Lemma~\ref{l_norm} we have 
$$\vertiiibig{M_{\Theta_{n_k}}}< 1-\frac{1}{2}\chi^{\star}$$
for all $k$. Similarly to~\eqref{pr_v}, by~\eqref{iter_v} 
we have for all $k$
\begin{align*}
\Vert v_{n_k+1}\Vert 
\le c\Vert v_{n_k}\Vert +\Vert U_{n_k+1}\Vert ,
\end{align*}
where $c=(1-\frac{\rho}{2})(1-\frac{1}{2}\chi^{\star})+\frac{\rho}{2}\in (0,1)$. 
Letting $k\to \infty$ and using Lemma~\ref{lemma_l} and Proposition~\ref{p_error} we obtain $\ell\le c\ell$, which contradicts $c\in (0,1)$
as $\ell >0$. 
\end{proof}

\begin{prop}
\label{p_ddd}
$\P(\chi_{\star}=0, \chi^{\star}>0, \ell=0)=0$.
\end{prop}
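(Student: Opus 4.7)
The plan is to argue pathwise by contradiction on the event $\mathcal{E}=\{\chi_\star=0,\chi^\star>0,\ell=0\}$, which we assume has positive probability. Fixing a sample point in $\mathcal{E}$ and setting $\alpha=\chi^\star/3>0$, the idea is to exploit the oscillation of $\chi(\Theta_n)$ between small and large values to force $\Theta_n$ to travel a positive distance infinitely often, while simultaneously the hypothesis $\ell=0$ combined with the contraction from Lemma~\ref{l_norm} will force those same excursions to be arbitrarily short. Since $\chi$ extends continuously to the compact simplex $\Sigma$ (it is the minimum of six continuous functions away from $V$, and each candidate fraction tends to $0$ at the vertices), the sets $A=\{p\in\Sigma:\chi(p)\le\alpha\}$ and $B=\{p\in\Sigma:\chi(p)\ge2\alpha\}$ are disjoint and compact, so $\eta:=d(A,B)>0$ in the $\ell_1$ distance. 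The definitions of $\chi_\star$ and $\chi^\star$ guarantee that $\Theta_n\in A$ and $\Theta_n\in B$ each for infinitely many $n$.

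Next, I would enumerate the visit times $s_1<s_2<\cdots$ of $\Theta$ to $B$ and, for all sufficiently large $k$, set $a_k=\max\{n\le s_k:\Theta_n\in A\}$. Then $a_k<s_k$ (since $A\cap B=\emptyset$), $a_k\to\infty$, and $\chi(\Theta_n)>\alpha$ for every $n\in(a_k,s_k]$. By Lemma~\ref{l_norm} and the triangle inequality, on this interval $\vertiii{(1-\tfrac{\rho}{2})M_{\Theta_n}-\tfrac{\rho}{2}I}\le c$ where $c:=1-(1-\tfrac{\rho}{2})\alpha<1$, while the universal bound $\vertiii{(1-\tfrac{\rho}{2})M_p-\tfrac{\rho}{2}I}\le1$ on $\mathcal{L}$ still controls the single step from $a_k$ to $a_k+1$. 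Applying~\eqref{iter_v} together with Lemma~\ref{lemma_notgeom} (started at $m=a_k+1$, with a global $\gamma\in(\max\{c,\nu^{-1}\},1)$ and with the $O(\nu^{-n})$ estimate on $\|U_n\|$ from Proposition~\ref{p_error}) yields, after absorbing the one non-contracting step via $\|v_{a_k+1}\|\le\|v_{a_k}\|+\|U_{a_k+1}\|$,
\begin{align*}
\sum_{n=a_k}^{s_k-1}\|v_n\|\le C\bigl(\|v_{a_k}\|+\nu^{-a_k}\bigr)
\end{align*}
for a constant $C$ independent of $k$.

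Plugging this into the increment identity~\eqref{iter_incr}, together with $\sum_{n\ge a_k}\|W_{n+1}\|=O(\nu^{-a_k})$ from Proposition~\ref{p_error}, I would obtain
\begin{align*}
\eta\le\|\Theta_{s_k}-\Theta_{a_k}\|\le\sum_{n=a_k}^{s_k-1}\|\Theta_{n+1}-\Theta_n\|\le C'\bigl(\|v_{a_k}\|+\nu^{-a_k}\bigr).
\end{align*}
Letting $k\to\infty$, the right-hand side tends to $0$ because $a_k\to\infty$ and, by Lemma~\ref{lemma_l}, $\|v_{a_k}\|\to\ell=0$ on $\mathcal{E}$; this contradicts $\eta>0$, so $\P(\mathcal{E})=0$. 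The main technical obstacle is ensuring that the contraction constant $c$ and the parameters in Lemma~\ref{lemma_notgeom} may be chosen uniformly in $k$; this is delivered by fixing the single value $\alpha=\chi^\star/3$ along the sample path, which prevents any degeneration along a subsequence of excursions and reduces the remainder of the argument to the bookkeeping above.
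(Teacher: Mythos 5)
Your argument is correct: every step checks out (the continuous extension of $\chi$ by zero at the vertices, the positive separation $\eta$ between $\{\chi\le\alpha\}$ and $\{\chi\ge 2\alpha\}$, the well-definedness and divergence of $a_k$, the contraction $\vertiii{(1-\frac{\rho}{2})M_{\Theta_n}-\frac{\rho}{2}I}\le 1-(1-\frac{\rho}{2})\alpha$ on the interior of each excursion via Lemma~\ref{l_norm}, the absorption of the single non-contracting step, and the displacement bound through~\eqref{iter_incr} with Proposition~\ref{p_error}), and the contradiction $\eta\le C'(\Vert v_{a_k}\Vert+\nu^{-a_k})\to 0$ is legitimate since $\Vert v_{a_k}\Vert\to\ell=0$ by Lemma~\ref{lemma_l}. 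The engine is the same as in the paper --- contraction away from the boundary, Lemma~\ref{lemma_notgeom}, Proposition~\ref{p_error}, and the increment estimate $\Vert\Theta_{n+1}-\Theta_n\Vert\le 2\Vert v_n\Vert+\Vert W_{n+1}\Vert$ --- but the bookkeeping is organised differently. The paper works with $\mathrm{dist}(\Theta_n,\partial\Sigma)$, fixes a single time $m$ at which the orbit is at distance at least $d/2$ from the boundary and $\Vert v_m\Vert$ is already small (the explicit smallness condition~\eqref{co}, available because $\ell=0$), and then runs a stopping-time/trapping argument: up to the first time $k$ the distance drops below $d/4$ the contraction is in force, so the total displacement is less than $d/4$, making such a $k$ impossible and contradicting $\chi_{\star}=0$. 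You instead control infinitely many excursions from the last visit to the low-$\chi$ set $A$ up to each visit to the high-$\chi$ set $B$ and reach the contradiction in the limit $k\to\infty$, so the hypothesis $\ell=0$ enters only through $\Vert v_{a_k}\Vert\to 0$ along the subsequence, with no quantitative threshold to fix in advance. The trade-off is mild: your version is slightly leaner in the choice of parameters, while the paper's version proves a bit more (after time $m$ the orbit never again approaches the boundary), a trapping style that is reused in the positive-probability arguments of Section~\ref{s_main}.
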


\begin{proof} Denote 
$\text{dist}(p,\partial\Sigma)=\inf\limits_{\hat p\in \partial\Sigma} \Vert p-\hat p\Vert $ and 
observe that $\chi^{\star}>0$ implies 
\begin{align*}
\limsup_{n\to\infty}\text{\rm dist}(\Theta_n,\partial\Sigma)=d>0.
\end{align*}

Using Lemma~\ref{l_norm}, denote by $\hat c\in (0,1)$ a uniform upper bound for $\vertiii{ M_p}$
over the set $\{p: \text{\rm dist}(p,\partial\Sigma)\ge \frac{d}{4}\}$. Denote $c=(1-\frac{\rho}{2})\hat c+\frac{\rho}{2}$
and choose $\gamma$ as in Lemma~\ref{lemma_notgeom}. 
\smallskip

Let $\nu\in (1,\mu^{\frac 1 2})$. Let $m$ be such that  $\text{\rm dist}(\Theta_m,\partial\Sigma)\ge \frac{d}{2}$ and sufficiently large
so that 
\begin{align}
\label{co}
 \frac{2}{1-\gamma}\Big[\Vert v_m\Vert +\nu^{-m}\frac{\nu}{\gamma\nu-1}\Big] +\frac{\nu}{\nu-1}\nu^{-m}<\frac{d}{4},
\end{align}
which is possible as $\ell=0$, 
as well as  $\Vert U_{n+1}\Vert \le \nu^{-n}$ and $\Vert W_{n+1}\Vert \le \nu^{-n}$ for all $n\ge m$, which is possible by Proposition~\ref{p_error}. Similarly to~\eqref{pr_v}, by~\eqref{iter_v} we have 
for all $n>m$
\begin{align*}
\Vert v_{n+1}\Vert 
&\le c_n\Vert v_n\Vert +\Vert U_{n+1}\Vert \le c_n\Vert v_n\Vert +\nu^{-n},
\end{align*}
where $c_n=c$ if $\text{\rm dist}(\Theta_n, \partial\Sigma)\ge \frac{d}{4}$ and $c_n=1$ otherwise. 
\smallskip

Let $k=\inf\{n>m: \text{\rm dist}(\Theta_n, \partial\Sigma)< \frac{d}{4}\}$. Observe that $k<\infty$ due to $\chi_{\star}=0$. Then 
we have using~\eqref{iter_incr} and Lemmas~\ref{lemma_notgeom} and~\ref{l_norm} 
\begin{align}
\Vert \Theta_{k}-\Theta_m\Vert 
&\le \sum_{n=m}^{k-1}\Vert \Theta_{n+1}-\Theta_{n}\Vert 
\le 2\sum_{n=m}^{k-1}\Vert v_n\Vert +\sum_{n=m}^{k-1}\Vert W_{n+1}\Vert \notag\\
&\le 2\Big[\Vert v_m\Vert +\nu^{-m}\frac{\nu}{\gamma\nu-1}\Big]\sum_{n=m}^{k-1}\gamma^{n-m} +\sum_{n=m}^{k-1}\nu^{-n}\notag\\
&\le \frac{2}{1-\gamma}\Big[\Vert v_m\Vert +\nu^{-m}\frac{\nu}{\gamma\nu-1}\Big] +\frac{\nu}{\nu-1}\nu^{-m}<\frac{d}{4}
\label{xx4}
\end{align}
due to~\eqref{co}. Together with $\text{\rm dist}(\Theta_m,\partial\Sigma)\ge \frac{d}{2}$ this implies 
$\text{\rm dist}(\Theta_k,\partial\Sigma)\ge \frac{d}{4}$ leading to a contradiction. 
\end{proof}
\bigskip

%%%%%%%%%%%%%%%%%%%%%%%%%%%%%%%%%%%%%%%%%
%%%%%%%%%%%%%%%%%%%%%%%%%%%%%%%%%%%%%%%%%
%%%%%%%%%%%%%%%%%%%%%%%%%%%%%%%%%%%%%%%%%

\section{Boundary orbits}
\label{s_boundary}

We begin this section by studying the orbits of the deterministic dynamical system~\eqref{dynsys0} starting (and therefore staying) on the boundary
of the simplex. It is not hard to establish their limit behaviour which turns out to be an edge equilibrium or an edge equilibrium two-cycle.  
Using a continuity argument, we are then able to show that  if the sequence
$(\Theta_n,\pi_n)$ approaches the boundary then it has an edge equilibrium or an edge equilibrium two-cycle as a limit point.   

%Consider\footnote{Remove repetition} the mapping
% \begin{align*}
% \Phi(p,q)=\big((1-\rho)p+\rho(\1-q-M_p q), M_pq\big), \qquad p\in \Sigma\setminus V, \ q\in\Sigma.
% \end{align*}
% and the corresponding  dynamical system on $\Sigma\setminus V\times \Sigma$ defined by
% \begin{align}
% \label{dynsys}
%  (p_{n+1},q_{n+1})=\Phi(p_n,q_n).
% \end{align}
% with some initial condition $(p_0,q_0)$. 
 
 \begin{lemma} The dynamical system~\eqref{dynsys0} is well-defined for any initial condition. 
 \end{lemma}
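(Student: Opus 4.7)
The claim is that starting from $(p,q)\in(\Sigma\setminus V)\times\Sigma$, the image $\Phi(p,q)=(p',q')$ with $q'=M_pq$ and $p'=(1-\rho)p+\rho(\one-q-M_pq)$ satisfies $(p',q')\in(\Sigma\setminus V)\times\Sigma$; iterating then gives well-definedness for all time. So the plan has four short steps, and the only subtlety is ensuring $p'\notin V$.

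First, I would observe that the matrix $M_p$ is column-stochastic: each column has entries of the form $\frac{a}{a+b}$ and $\frac{b}{a+b}$ and a zero, which sum to $1$. Since $q\in\Sigma$, this gives $q'=M_pq\in\Sigma$ immediately (non-negative coordinates, summing to $1$).

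Second, I would check $p'\in\Sigma$. The three coordinates of $p'$ sum to
\begin{equation*}
(1-\rho)\Vert p\Vert+\rho\big(3-\Vert q\Vert-\Vert M_pq\Vert\big)=(1-\rho)+\rho(3-1-1)=1.
\end{equation*}
For non-negativity, note that each row of $M_p$ has one zero entry, so
\begin{equation*}
(M_pq)^{\ssup i}=(M_p)_{i,i\oplus 1}\,q^{\ssup{i\oplus 1}}+(M_p)_{i,i\ominus 1}\,q^{\ssup{i\ominus 1}}\le q^{\ssup{i\oplus 1}}+q^{\ssup{i\ominus 1}}=1-q^{\ssup i},
\end{equation*}
since the matrix entries lie in $[0,1]$. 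Hence $1-q^{\ssup i}-(M_pq)^{\ssup i}\ge 0$, and combined with $p^{\ssup i}\ge 0$ and $\rho\in(0,1)$ this gives $(p')^{\ssup i}\ge 0$.

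Third, and this is the main point, I would show $p'\notin V$. Suppose for contradiction $p'=v_i$ for some $i\in I$. Then for each $j\neq i$,
\begin{equation*}
(1-\rho)p^{\ssup j}+\rho\big(1-q^{\ssup j}-(M_pq)^{\ssup j}\big)=0.
\end{equation*}
Both summands are non-negative by the bound above, and since $\mu\in(1,\infty)$ gives $\rho=1-\mu^{-1}\in(0,1)$ (so $1-\rho>0$), this forces $p^{\ssup j}=0$ for both $j\neq i$. But then $p=v_i\in V$, contradicting the hypothesis $p\in\Sigma\setminus V$. Therefore $p'\in\Sigma\setminus V$, so $M_{p'}$ is defined and the iteration can be continued.

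The only delicate step is the third one, and its proof relies crucially on the non-trivial range $\rho\in(0,1)$ coming from $1<\mu<\infty$. Everything else amounts to linearity and the row/column structure of $M_p$ on the simplex.
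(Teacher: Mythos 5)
Your proof is correct and follows essentially the same route as the paper: column-stochasticity of $M_p$ gives $M_pq\in\Sigma$, the row structure of $M_p$ gives $\one-q-M_pq\in\Sigma$, and your contradiction argument for $p'\notin V$ is just an explicit spelling-out of the paper's one-line appeal to convexity (a convex combination with positive weight $1-\rho$ on $p\notin V$ cannot be an extreme point of $\Sigma$). Nothing is missing.
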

 
 \begin{proof} For any $p\in \Sigma\setminus V$ and $q\in\Sigma$ we have $M_pq\in\Sigma$ as its coordinates remain non-negative and 
 add up to one since so do the columns of $M_p$. 
 Further, the coordinates of $(M_p+I)q$ do not exceed $1$ since the entries of $M_p+I$ are bounded by $1$. This implies 
 $\one-q-M_p q\in \Sigma$. Finally, $(1-\rho)p+\rho(\one-q-M_p q)\in \Sigma\setminus V$ by convexity.   
 \end{proof}

For each $i$, denote by $E_i$ the edge of $\Sigma$ corresponding to the $i$-th coordinate being equal to zero. 
   
 \begin{lemma} 
 \label{l_boundary}
 Let $p_0\in E_i\setminus V$ for some $i$ and $q_0=(a_0,b_0,c_0)\in \Sigma$. 
 Then $p_n\in E_i\setminus V$ for all $n$, and $(p_n)$ converges to some $p_*\in E_i\setminus V$. 
 Furthermore, 
 \begin{align}
 \label{qqbbb}
 q_{2n}\to q_{[p_*]}-(2c_0-1)e_{-1}(p_*)\qquad\text{and}\qquad q_{2n+1}\to q_{[p_*]}+(2c_0-1)e_{-1}(p_*).
 \end{align}
 All convergences are uniform with respect to $(p_0,q_0)$. 
 \end{lemma}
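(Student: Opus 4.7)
By symmetry I focus on $i=3$ and write $p_n=(x_n,1-x_n,0)$ with $x_0\in(0,1)$. The plan is to first derive a closed recursion for $(x_n)$ on $E_3$, then show that its increments decay geometrically (locking $x_n$ into an interval bounded away from $0$ and $1$), and finally read off the limits of $q_{2n},q_{2n+1}$ from an explicit formula.

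A direct matrix calculation yields $M_{p_n}q_n=((1-x_n)c_n,\,x_n c_n,\,1-c_n)$ whenever $q_n=(a_n,b_n,c_n)$; the third coordinate of $\one-q_n-M_{p_n}q_n$ is therefore zero, so $p_{n+1}\in E_3$. Moreover $c_{n+1}=1-c_n$, so $(c_n)$ alternates: $c_{2k}=c_0$ and $c_{2k+1}=1-c_0$. For $n\ge 1$, we have $a_n=(1-x_{n-1})c_{n-1}$ from the previous iteration, which, substituted into the first-coordinate recursion $x_{n+1}=(1-\rho)x_n+\rho(1-a_n-(1-x_n)c_n)$ and simplified using $c_n=1-c_{n-1}$, yields
\begin{align*}
x_{n+1}=(1-\rho c_{n-1})x_n+\rho c_{n-1}x_{n-1},\qquad n\ge 1.
\end{align*}
This is a convex combination with coefficient $\rho c_{n-1}\in[0,\rho]$, and $\rho=1-\mu^{-1}<1$.

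Consequently the increments $u_n=x_{n+1}-x_n$ satisfy $u_n=-\rho c_{n-1}u_{n-1}$, so $|u_n|\le\rho^n|u_0|$ and $(x_n)$ is geometrically Cauchy, converging to some $x_*$. The convex combination structure moreover confines $x_n$ to the interval $[\min(x_0,x_1),\max(x_0,x_1)]$ for all $n\ge 0$; a short check shows $x_1=(1-\rho(1-c_0))x_0+\rho b_0\in(0,1)$ whenever $x_0\in(0,1)$ and $b_0\in[0,1-c_0]$, hence $x_*\in(0,1)$ and $p_*:=(x_*,1-x_*,0)\in E_3\setminus V$, as required.

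Finally, for the two-cycle limits, substituting $x_n\to x_*$ and the alternation of $c_n$ into the explicit formula for $q_{n+1}$ gives
\begin{align*}
q_{2k+1}\to\big((1-x_*)c_0,\,x_*c_0,\,1-c_0\big),\qquad q_{2k+2}\to\big((1-x_*)(1-c_0),\,x_*(1-c_0),\,c_0\big).
\end{align*}
Using $q_{[p_*]}=\tfrac{1}{2}(1-x_*,x_*,1)$ and $e_{-1}(p_*)=\tfrac{1}{2}(1-x_*,x_*,-1)$, a direct substitution verifies that these limits equal $q_{[p_*]}+(2c_0-1)e_{-1}(p_*)$ and $q_{[p_*]}-(2c_0-1)e_{-1}(p_*)$, respectively, matching the claim. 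Uniformity in $(p_0,q_0)$ is automatic since the geometric rate $\rho$ and the bound $|u_0|\le\rho$ are universal. The main subtle point is ruling out $p_*\in V$: this is resolved by the convex combination structure of the two-step recursion, which traps $x_n$ in $(0,1)$ not only initially but for all $n$.
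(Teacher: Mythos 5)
Your proposal is correct and takes essentially the same route as the paper: after restricting to $E_3$ you obtain the same key recursion (the paper writes it as $x_{n+1}-x_n=\rho(1-c_n)(x_{n-1}-x_n)$, you as the convex combination $x_{n+1}=(1-\rho c_{n-1})x_n+\rho c_{n-1}x_{n-1}$), deduce geometric decay of the increments, and identify the limits of $q_{2n}$, $q_{2n+1}$ by the same explicit computation with $c_{2k}=c_0$, $c_{2k+1}=1-c_0$. The only cosmetic differences are your one-step contraction at rate $\rho$ versus the paper's two-step bound $\rho^2c_n(1-c_n)\le\tfrac14$, and your convexity-trapping argument (with the explicit check $x_1\in(0,1)$) for $x_*\in(0,1)$ in place of the paper's sign-alternation argument.
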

 
 \begin{proof} 
 Denote $p_n=(x_n,y_n,z_n)$ and $q_n=(a_n,b_n,c_n)$. 
 Without loss of generality assume that $i=3$, i.e.\ $z_0=0$. 
Substituting this into $M_{p_0}$
we obtain that $z_1=0$ and hence $z_n=0$ for all $n$. 
 Substituting this into $M_{p_n}$ 
 we obtain 
 \begin{align}
 \label{qq1}
 a_{n+1}=y_nc_n\qquad \text{and}\qquad c_{n+1}=1-c_n
 \end{align}
 as well as 
 \begin{align}
 \label{xxxx1}
 x_{n+1}-x_n
 &=\rho(-x_n+1-a_n-y_nc_n)\notag\\
 &=\rho ((1-c_n)(1-x_n)-y_{n-1}c_{n-1})
 =\rho (1-c_n) (x_{n-1}-x_n),
 \end{align}
 which in particular implies $|x_{n+1}-x_n|\le |x_{n}-x_{n-1}|$.
 Iterating again we obtain 
 \begin{align*}
 |x_{n+1}-x_n|
 =\rho^2 (1-c_n)c_n |x_{n-1}-x_{n-2}|\le \frac 1 4 |x_{n-1}-x_{n-2}|
 \end{align*}
 since $t(1-t)\le \frac 1 4$ on $[0,1]$. This implies uniform convergence of $x_n$ to some $x_*$. 
 It follows from the sign alternation in~\eqref{xxxx1} and the decreasing size of the increments of $(x_n)$ that $x_*\in (0,1)$. Obviously, $y_n\to y_*=1-x_*$ uniformly and so 
 $p_n\to p_*=(x_*, y_*, 0)\in E_i\setminus V$ uniformly. 
 \smallskip
 
 It follows from~\eqref{qq1} that $c_{2n}=c_0$ and $c_{2n+1}=1-c_0$. Again by~\eqref{qq1} this implies that 
$a_{2n}\to y_*(1-c_0)$ and $a_{2n+1}\to y_*c_0$ uniformly. This immediately implies 
$b_{2n}\to x_*(1-c_0)$ and $b_{2n+1}\to x_*c_0$ uniformly, 
%This means that 
%$q_{2n}\to q_{[p_*]}-(2c_0-1)e_{-1}(p_*)$ and $q_{2n+1}\to q_{[p_*]}+(2c_0-1)e_{-1}(p_*)$. 
which is equivalent to~\eqref{qqbbb} by definition of $e_{-1}(p)$.
 \end{proof}
 
  It follows from Propositions~\ref{p_split} and~\ref{p_error} that 
 \begin{align}
 \label{iter_phi}
 (\Theta_{n+1},\pi_{n+1})=\Phi(\Theta_n,\pi_n)+V_{n+1},
 \end{align} 
 where $V_{n+1}=(S_{n+1},R_{n+1})$ exponentially decays to zero.
 \smallskip
 
 \begin{lemma} 
 \label{l_versub}
 If $(n_k)$ is a subsequence of indices such that $\Theta_{n_k}\to v\in V$ then $\Theta_{n_k+1}\to v$.   
 \end{lemma}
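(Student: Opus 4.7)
The plan is to extract coordinate-wise information from the iteration~\eqref{iter_lim_th} by exploiting the fact that every coordinate of $\Theta_n$ and $\pi_n$ lies in $[0,1]$. Write $v = v_i$ for some $i\in I$, so that the hypothesis becomes $\Theta_{n_k}^{\ssup i} \to 1$. Reading~\eqref{iter_lim_th} at $n = n_k - 1$ coordinate-wise gives
\begin{align*}
\Theta_{n_k}^{\ssup i} = (1-\rho)\Theta_{n_k-1}^{\ssup i} + \rho\bigl(1 - \pi_{n_k-1}^{\ssup i} - \pi_{n_k}^{\ssup i}\bigr) + S_{n_k}^{\ssup i},
\end{align*}
which I would view as expressing the limiting value $1$ as the sum of a term in $[0,1-\rho]$ (since $\Theta_{n_k-1}^{\ssup i}\in[0,1]$), a term at most $\rho$ (since $\pi_{n_k-1}^{\ssup i},\pi_{n_k}^{\ssup i}\ge 0$), and an error vanishing almost surely by Proposition~\ref{p_error}. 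The only way the sum can tend to $1$ is if the first piece attains its supremum $1-\rho$ and the second its supremum $\rho$ in the limit; this forces $\Theta_{n_k-1}^{\ssup i} \to 1$ and $\pi_{n_k-1}^{\ssup i} + \pi_{n_k}^{\ssup i} \to 0$, and nonnegativity then yields $\pi_{n_k}^{\ssup i} \to 0$.

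Next, I would propagate this information forward by one step to control $\pi_{n_k+1}^{\ssup i}$. Using the explicit formula~\eqref{iter_pi3} for the $i$-th component of $M_{\Theta_{n_k}}\pi_{n_k}$ together with the bounds $\pi_{n_k}^{\ssup{i\oplus 1}}, \pi_{n_k}^{\ssup{i\ominus 1}} \le 1$ yields
\begin{align*}
\pi_{n_k+1}^{\ssup i} \le \frac{\Theta_{n_k}^{\ssup{i\ominus 1}}}{\Theta_{n_k}^{\ssup i} + \Theta_{n_k}^{\ssup{i\ominus 1}}} + \frac{\Theta_{n_k}^{\ssup{i\oplus 1}}}{\Theta_{n_k}^{\ssup{i\oplus 1}} + \Theta_{n_k}^{\ssup i}} + |R_{n_k+1}^{\ssup i}| \le \frac{1-\Theta_{n_k}^{\ssup i}}{\Theta_{n_k}^{\ssup i}} + |R_{n_k+1}^{\ssup i}|,
\end{align*}
and the right-hand side vanishes because $\Theta_{n_k}^{\ssup i} \to 1$ and $R_{n_k+1}\to 0$ by Proposition~\ref{p_error}.

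With all the pieces in hand, I would conclude by applying~\eqref{iter_lim_th} one step forward:
\begin{align*}
\Theta_{n_k+1}^{\ssup i} = (1-\rho)\Theta_{n_k}^{\ssup i} + \rho\bigl(1 - \pi_{n_k}^{\ssup i} - \pi_{n_k+1}^{\ssup i}\bigr) + S_{n_k+1}^{\ssup i}.
\end{align*}
Substituting $\Theta_{n_k}^{\ssup i} \to 1$, $\pi_{n_k}^{\ssup i} \to 0$, $\pi_{n_k+1}^{\ssup i} \to 0$ and $S_{n_k+1}^{\ssup i} \to 0$ gives $\Theta_{n_k+1}^{\ssup i} \to (1-\rho) + \rho = 1$; since $\Theta_{n_k+1} \in \Sigma$, the two remaining coordinates sum to $1-\Theta_{n_k+1}^{\ssup i}\to 0$ and hence each vanishes, which means $\Theta_{n_k+1} \to v_i = v$. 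The whole argument is essentially algebraic; the only subtle point is the maximum-principle squeeze in the first paragraph, which recovers $\pi_{n_k}^{\ssup i} \to 0$ from the convex-combination structure of~\eqref{iter_lim_th} and crucially relies on $\rho \in (0,1)$ (i.e.\ $\mu > 1$) to prevent the two pieces from compensating each other.
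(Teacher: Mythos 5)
Your argument is correct and follows essentially the same route as the paper: read~\eqref{iter_lim_th} at step $n_k-1$ and use the convex-combination structure (with $\rho\in(0,1)$ and Proposition~\ref{p_error}) to force $\pi_{n_k}^{\ssup i}\to 0$, then use~\eqref{iter_pi3} to get $\pi_{n_k+1}^{\ssup i}\to 0$, and finally apply~\eqref{iter_lim_th} at step $n_k$ to conclude $\Theta_{n_k+1}^{\ssup i}\to 1$. The only cosmetic difference is that you bound the $i$-th row of $M_{\Theta_{n_k}}$ explicitly by $(1-\Theta_{n_k}^{\ssup i})/\Theta_{n_k}^{\ssup i}$, while the paper simply notes the off-diagonal weights vanish; both are the same observation.
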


 \begin{proof} We have $\Theta_{n_k}^{\ssup i}\to 1$ for some $i$. By~\eqref{iter_lim_th} 
 \begin{align*}
 \Theta_{n_k}^{\ssup i}=(1-\rho)\Theta_{n_k-1}^{\ssup i}+\rho(1-\pi_{n_k-1}^{\ssup i}-\pi_{n_k}^{\ssup i})+S^{\ssup i}_{n_k},
\end{align*}
which implies $\Theta_{n_k-1}^{\ssup i}\to 1$ and $1-\pi_{n_k-1}^{\ssup i}-\pi_{n_k}^{\ssup i}\to 1$ by convexity as both terms are bounded above by one, and due to Proposition~\ref{p_error}. The second convergence implies $\pi_{n_k}^{\ssup i}\to 0$ due to non-negativity of all $\pi_n$.  
Taking into account Proposition~\ref{p_error} again, we obtain $\pi^{\ssup i}_{n_k+1}\to 0$ according to~\eqref{iter_pi3} and then $\Theta_{n_k+1}^{\ssup i}\to 1$ 
according to~\eqref{iter_lim_th}.  
 \end{proof}

\begin{lemma} 
\label{limit_points}
Almost surely on the event $\{\chi^{\star}=0\}\cap\mathcal{D}^c$ the sequence $(\Theta_n,\pi_n)$ 
has a limit point of the form $(p,q_{[p]}+ \ell e_{-1}(p))$ or $(p,q_{[p]}- \ell e_{-1}(p))$, where $p\in\partial\Sigma\setminus V$.
\end{lemma}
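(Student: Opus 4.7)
The plan is to extract a subsequence along which $(\Theta_n,\pi_n)$ converges to some $(p,q^{\star})$ with $p$ lying on an open edge, to show that the subsequent iterates track the deterministic orbit of $\Phi$ from $(p,q^{\star})$, and finally to combine Lemma~\ref{l_boundary} with Lemma~\ref{lemma_l} to identify the form of the limit point. On $\{\chi^{\star}=0\}$ every subsequential limit of $(\Theta_n)$ lies in $\partial\Sigma$, while $\mathcal{D}^c$ guarantees that at least one such limit is not a vertex, so by compactness of $\Sigma\times\Sigma$ one can select $(n_k)$ with $\Theta_{n_k}\to p\in\partial\Sigma\setminus V$ and $\pi_{n_k}\to q^{\star}\in\Sigma$.

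The second step is to show that $(\Theta_{n_k+j},\pi_{n_k+j})$ shadows the deterministic orbit $\Phi^j(p,q^{\star})$ for each fixed $j$. By Lemma~\ref{l_boundary} this orbit, together with its limit points, is a compact subset of $(\partial\Sigma\setminus V)\times\Sigma$, so it admits a compact neighbourhood in $(\Sigma\setminus V)\times\Sigma$ on which $\Phi$ is Lipschitz with some constant $L$. Combined with~\eqref{iter_phi} and $\Vert V_n\Vert=O(\nu^{-n})$ from Proposition~\ref{p_error}, a straightforward induction on $j$ yields
\begin{align*}
(\Theta_{n_k+j},\pi_{n_k+j})\longrightarrow \Phi^j(p,q^{\star})\qquad\text{as }k\to\infty
\end{align*}
for every fixed $j\in\N$. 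Lemma~\ref{l_boundary} also produces $p_{\ast}\in\partial\Sigma\setminus V$ and $\alpha\in\R$ such that $\Phi^j(p,q^{\star})$ approaches the two-cycle $\{(p_{\ast},q_{[p_{\ast}]}\pm\alpha e_{-1}(p_{\ast}))\}$, and a diagonal argument, picking $j_k\to\infty$ slowly enough to keep $L^{j_k}\nu^{-n_k}\to 0$, exhibits one of these two values as a genuine limit point of $(\Theta_n,\pi_n)$.

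It then remains to identify $\alpha$ with $\ell$. By Lemma~\ref{lemma_l} and the continuity of $q_{[\cdot]}$ on $\Sigma\setminus V$, any limit point $(p^{\#},q^{\#})$ of $(\Theta_n,\pi_n)$ with $p^{\#}\in\Sigma\setminus V$ satisfies $\Vert q^{\#}-q_{[p^{\#}]}\Vert=\ell$. Substituting the two-cycle value and using $\Vert e_{-1}(p_{\ast})\Vert=1$, which is read off directly from~\eqref{e1}, forces $|\alpha|=\ell$, so the limit point has the required form $(p_{\ast},q_{[p_{\ast}]}\pm\ell e_{-1}(p_{\ast}))$. The main technical obstacle is the second step: compounding the exponential perturbations $V_n$ through finitely many iterations of $\Phi$ without losing control. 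This is resolved by the Lipschitz bound on the compact orbit segment bounded away from $V$, which keeps the accumulated error of order $L^{j_k}\nu^{-n_k}$, still vanishing as $k\to\infty$ provided $j_k$ grows sufficiently slowly.
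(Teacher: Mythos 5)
Your overall route---extracting a subsequential limit $(p,q^{\star})$ with $p$ on an open edge, shadowing the deterministic forward orbit of $\Phi$ for finitely many steps, and then identifying the limiting two-cycle via Lemma~\ref{l_boundary} and Lemma~\ref{lemma_l}---is a sound alternative to the paper's argument, which runs the orbit \emph{backwards}: there one shows that the limit point is, for every $m$, the image under $\Phi^m$ of some point of $(\partial\Sigma\setminus V)\times\Sigma$, and then invokes the uniformity (in the initial condition) of the convergence stated in Lemma~\ref{l_boundary}. Your forward version needs only the pointwise statement of that lemma, at the price of some bookkeeping in the diagonal step; note that the accumulated discrepancy after $j_k$ steps is of order $L^{j_k}(\nu^{-n_k}+\delta_k)$ with $\delta_k=\Vert(\Theta_{n_k},\pi_{n_k})-(p,q^{\star})\Vert$, so $j_k$ must be chosen slow relative to $\delta_k$ as well, not only to $\nu^{-n_k}$; alternatively, your fixed-$j$ convergence together with a diagonal extraction over $j$ avoids the Lipschitz constant altogether. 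The identification $|\alpha|=\ell$ via Lemma~\ref{lemma_l}, continuity of $p\mapsto q_{[p]}$ and $\Vert e_{-1}(p_{*})\Vert=1$ is correct.

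There is, however, a genuine gap at the very first step: you assert that $\mathcal{D}^c$ guarantees a subsequential limit of $(\Theta_n)$ that is not a vertex. This does not follow from the definition of $\mathcal{D}^c$, because at this stage $(\Theta_n)$ is not known to converge: on $\mathcal{D}^c$ the sequence could a priori oscillate between neighbourhoods of two or three vertices, so that \emph{every} limit point lies in $V$ while the event $\mathcal{D}$ still fails for lack of convergence (the increments $\Theta_{n+1}-\Theta_n$ need not vanish when $\ell>0$, so one cannot even appeal to connectedness of the limit set). Ruling this scenario out is precisely the role of Lemma~\ref{l_versub} in the paper: if $\Theta_{n_k}\to v\in V$ then $\Theta_{n_k+1}\to v$, which forbids switching between small neighbourhoods of distinct vertices; hence if all limit points were vertices the sequence would have to converge to a single vertex, contradicting $\mathcal{D}^c$. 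Without this (or an equivalent) argument, the extraction of $(n_k)$ with $\Theta_{n_k}\to p\in\partial\Sigma\setminus V$, on which your whole proof rests, is unjustified.
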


\begin{proof}
First, observe that any limit point of $(\Theta_n)$ belongs to $\partial\Sigma$ on the event $\{\chi^{\star}=0\}$. 
\smallskip

Second, on the event $\mathcal{D}^c$ the sequence $(\Theta_n)$ has a limit point that is not a vertex. Indeed, otherwise 
$(\Theta_n)$ could be split into two or three 
subsequences converging to different vertices. That, however, would contradict Lemma~\ref{l_versub}, which
makes switching between such subsequences impossible. 
\smallskip

Now we know that on the event $\{\chi^{\star}=0\}\cap\mathcal{D}^c$ we have a convergent subsequence $(\Theta_{n_k},\pi_{n_k})\to (p^{\ssup 0},q^{\ssup 0})\in (\partial\Sigma\setminus V)\times \Sigma$.
Consider the subsequence $(\Theta_{n_k-1},\pi_{n_k-1})$ and observe that it contains a convergent subsequence 
$(\Theta_{n_{k_i}-1},\pi_{n_{k_i}-1})\to (p^{\ssup 1}, q^{\ssup 1})\in \partial\Sigma\times \Sigma$ as $\chi^{\star}=0$. By Lemma~\ref{l_versub}
$p^{\ssup 1}\not\in V$ as otherwise it would imply $p^{\ssup 0}\in V$. Continuity of $\Psi$ and~\eqref{iter_phi} imply 
\begin{align*}
\Phi(p^{\ssup 1}, q^{\ssup 1})=\lim_{i\to\infty}\Phi(\Theta_{n_{k_i}-1},\pi_{n_{k_i}-1})
=\lim_{i\to\infty}(\Theta_{n_{k_i}},\pi_{n_{k_i}})=(p^{\ssup 0},q^{\ssup 0}).
\end{align*}
Iterating this procedure we obtain, for each $m$, $(p^{\ssup 0}, q^{\ssup 0})=\Phi^m(p^{\ssup m}, q^{\ssup m})$ for some 
$p^{\ssup m}\in \partial \Sigma\setminus V$ and $q^{\ssup m}\in \Sigma$. By Lemma~\ref{l_boundary} this implies that 
$(p^{\ssup 0}, q^{\ssup 0})$ is arbitrarily close to points of the form $(p,q_{[p]}+\beta e_{-1}(p))$ for some $p\in\partial\Sigma\setminus V$
and $\beta\in\R$ and hence is itself of such form.  This implies $q^{\ssup 0}=q_{[p^{\ssup 0}]}+\beta e_{-1}(p^{\ssup 0})$, and $|\beta|=\ell$
follows from Lemma~\ref{lemma_l}.
\end{proof}

\begin{lemma} 
\label{l_bdd_1}
The limit
$\ell\in [0,1]$ almost surely on the event $\{\chi^{\star}=0\}\cap\mathcal{D}^c$. 
\end{lemma}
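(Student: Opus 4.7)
The non-negativity $\ell\ge 0$ is immediate since $\ell$ is the almost sure limit of the $\ell_1$-norms $\Vert \pi_n-q_{[\Theta_n]}\Vert$ from Lemma~\ref{lemma_l}. So the entire content of the lemma is the upper bound $\ell\le 1$. The plan is to extract a limit point from Lemma~\ref{limit_points} and exploit the fact that $\pi_n\in \Sigma$ for all $n$, so any limit point of $(\pi_n)$ must also lie in $\Sigma$ because $\Sigma$ is closed.

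Concretely, I would argue as follows. Work pointwise on the event $\{\chi^{\star}=0\}\cap \mathcal{D}^c$. By Lemma~\ref{limit_points} there exist $p\in \partial\Sigma\setminus V$ and $\beta\in\{-\ell,+\ell\}$ such that $(p, q_{[p]}+\beta e_{-1}(p))$ is a limit point of the sequence $(\Theta_n,\pi_n)$. In particular $q_{[p]}+\beta e_{-1}(p)\in\Sigma$, so all three of its coordinates are non-negative.

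Now I would compute these coordinates in the explicit form given by~\eqref{qp} and~\eqref{e1}. Taking $p$ on the edge $E_3$ for concreteness, write $p=(x,y,0)$ with $x+y=1$ and $x,y\in(0,1)$; then
\begin{align*}
q_{[p]}+\beta e_{-1}(p)=\tfrac 1 2\bigl(y(1+\beta),\ x(1+\beta),\ 1-\beta\bigr),
\end{align*}
and non-negativity of the third coordinate forces $\beta\le 1$, while non-negativity of the first two forces $\beta\ge -1$, i.e.\ $|\beta|\le 1$. By symmetry the same bound holds for $p\in E_1\setminus V$ or $p\in E_2\setminus V$ using the corresponding expressions in~\eqref{e1}. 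Since $|\beta|=\ell$, this yields $\ell\le 1$ as required.

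There is no real obstacle; the only thing to be careful about is that Lemma~\ref{limit_points} genuinely delivers a limit point with $p\in \partial\Sigma\setminus V$ rather than $p\in V$, which is exactly why the assumption $\mathcal{D}^c$ is needed and why $e_{-1}(p)$ from~\eqref{e1} (rather than its extension to vertices) is the object to use.
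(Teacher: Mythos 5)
Your proposal is correct and follows essentially the same route as the paper: both extract a limit point $(p, q_{[p]}\pm\ell e_{-1}(p))$ with $p\in\partial\Sigma\setminus V$ from Lemma~\ref{limit_points}, use that this point must lie in $\Sigma$ since $\pi_n\in\Sigma$, and read off $\ell\le 1$ from the explicit coordinates $\tfrac12\bigl(y(1\pm\ell),\,x(1\pm\ell),\,1\mp\ell\bigr)$. The only difference is cosmetic — you check the coordinate inequalities directly, while the paper notes that $\ell=1$ is the extreme value at which the point hits $\partial\Sigma$.
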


\begin{proof} By Lemma~\ref{limit_points}, let one of the points $(p,q_{[p]}\pm \ell e_{-1}(p))$, $p\in \partial\Sigma\setminus V$, be a limit point of $(\Theta_n,\pi_n)$. Without loss of generality assume that $p=(x,y,0)$. 
Observe that $q_{[p]}\pm \ell e_{-1}(p)\in\Sigma$, and the largest value $\ell$ satisfying this is $\ell=1$, as the point 
$q_{[p]}+ e_{-1}(p)=(y,x,0)\in \partial \Sigma$ and $q_{[p]}- e_{-1}(p)=(0,0,1)\in \partial \Sigma$. 
\end{proof}

\bigskip
%%%%%%%%%%%%%%%%%%%%%%%%%%%%%%%%
%%%%%%%%%%%%%%%%%%%%%%%%%%%%%%%%
%%%%%%%%%%%%%%%%%%%%%%%%%%%%%%%%

\section{Orbits converging to an edge}
\label{s_edge}

In this section we first find an asymptotics of $\Phi$ in small neighbourhoods of the boundary near an equilibrium point or 
an equilibrium two-cycle. 
Then, assuming that $(\Theta_n,\pi_n)$ ever comes close to such states, we prove that it never leaves, which then makes it possible to use 
the asymptotics to prove the convergence.
By symmetry it suffices to consider the edge corresponding to $z=0$. 
\smallskip

%Let $(K_{\e})$ be a family of domains, indexed by a small positive parameter $\e\in (0,\e_0)$, such that $K_{\e_1}\subset K_{\e_2}$ whenever 
%$\e_1<\e_2$. For real-valued functions $f$ and $g_1,\dots,g_k$ defined on $K_{\e_0}$ we write $f=O(g_1,\dots,g_k)$ 
%if there is a sufficiently small $\e$ such that  
%the ratio $\frac{f}{|g_1|+\cdots+|g_k|}$ is well-defined and bounded on 
%$K_{\e}$. We also use this notation for vector-valued functions if they satisfy the definition coordinate-wise.  
%\smallskip

Let $(K_{\e})$ be a family of domains, indexed by a small positive parameter $\e$, such that $K_{\e_1}\subset K_{\e_2}$ whenever 
$\e_1<\e_2$. For real-valued functions $f$ and $g_1,\dots,g_k$ we write $f=O(g_1,\dots,g_k)$ 
if there is a sufficiently small $\e_0$ such that  
the ratio $\frac{f}{|g_1|+\cdots+|g_k|}$ is well-defined and bounded on 
$K_{\e_0}$ by some constant $M$. We call $(M,\e_0)$ the bounding parameters. We also use this notation for vector-valued functions if they satisfy the definition coordinatewise.  
\smallskip

Let $\delta\in (0,\frac 1 4)$ and $\theta\in [0,1]$ be fixed. 
For any $\e\in (0,\frac 1 3)$, 
denote 
\begin{align*}
K_{\delta,\e}&=\big\{(x,y,z)\in\Sigma: x\in [\delta, 1-\delta], z\in (0,\e]\big\},\\
K^{\star}_{\theta, \delta,\e}&=\Big\{(x,z,\alpha,\beta, s, r)\in \R^4\times\Sigma^2: x\in [\delta, 1-\delta], z\in (0,\e], |\alpha|\le \e, \Vert \beta|-\theta|\le \e, \Vert s\Vert +\Vert r\Vert \le \e\Big\}. 
\end{align*}

\begin{proof}[Proof of Lemma~\ref{la0}]
A simple calculation shows that $q_{[p]}$ is an eigenvector with the eigenvalue $1$. The plane $\mathcal{L}$ is invariant as 
the columns of $M_p$ add up to one. Computing the trace and the determinant of $M_p$ we observe that 
the remaining two eigenvalues satisfy 
\begin{align}
\label{eigenvalues}
\lambda_0(p)+\lambda_{-1}(p)=-1 \qquad\text{and}\qquad \lambda_0(p)\lambda_{-1}(p)=\frac{2xyz}{(x+y)(x+z)(y+z)}. 
\end{align}
It can be easily shown that the expression on the right hand side of~\eqref{eigenvalues} is bounded by $\frac 1 4$ and nonnegative on the simplex.
%, with the equality if and only if 
%$p=(\frac 1 3, \frac 1 3, \frac 1 3)$.
This implies the existence of two real eigenvalues both belonging to $[-1,0]$ and being equal to $-1$ and $0$ if and only if   
the expression on the right hand side of~\eqref{eigenvalues} equals zero.
%, which are distinct whenever 
%$p$ is not the center of the symplex. 
\end{proof}

\begin{lemma} 
\label{la}
On $K_{\delta, \e}$ the following asymptotics hold:
\begin{align*}
\lambda_{0}(p)=-2z+O(z^2) 
\qquad\text{and}\qquad
\lambda_{-1}(p)=-1+2z+O(z^2)
\end{align*}
and 
\begin{align*}
e_{0}(p)=\frac 1 2(1, -1, 0)+O(z)
\qquad\text{and}\qquad
e_{-1}(p)=\frac 1 2(1-x,x,-1)+O(z).\phantom{iiii}
\end{align*}
\end{lemma}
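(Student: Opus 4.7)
The plan is to handle the eigenvalues first, using the sum-and-product identities already derived in the proof of Lemma~\ref{la0}, and then treat the eigenvectors by a combination of direct evaluation at $z=0$ and a smoothness-of-eigenprojections argument.

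For the eigenvalues I would start from~\eqref{eigenvalues} and use $x+y=1-z$ on the simplex to factor $(x+z)(y+z) = xy + z(x+y) + z^2 = xy + z$, hence $(x+y)(x+z)(y+z)=(1-z)(xy+z)$. On $K_{\delta,\e}$ the product $xy$ is bounded below by a positive constant depending only on $\delta$ (once $\e$ is small enough), so
\begin{align*}
\lambda_0(p)\lambda_{-1}(p)=\frac{2xyz}{(1-z)(xy+z)}=2z+O(z^2)
\end{align*}
uniformly. Combined with $\lambda_0+\lambda_{-1}=-1$, the two eigenvalues are the roots of $t^2+t+2z+O(z^2)=0$, and the quadratic formula with $\sqrt{1-8z+O(z^2)}=1-4z+O(z^2)$ produces the root $-2z+O(z^2)$ near $0$ and the root $-1+2z+O(z^2)$ near $-1$, which I identify with $\lambda_0(p)$ and $\lambda_{-1}(p)$ respectively.

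For the eigenvectors, I would first check by direct substitution that at $z=0$ the matrix $M_{(x,1-x,0)}$ admits $\tfrac12(1,-1,0)$ as a normalised eigenvector for eigenvalue $0$ and $\tfrac12(1-x,x,-1)$ as a normalised eigenvector for eigenvalue $-1$ (the latter agreeing with~\eqref{e1}). On $K_{\delta,\e}$ the three eigenvalues $1$, $\lambda_0(p)$, $\lambda_{-1}(p)$ are uniformly separated by the previous step, and $M_p$ is smooth in $(x,z)$ up to and including $z=0$. Analytic matrix perturbation theory then guarantees smooth dependence of the corresponding rank-one spectral projections, and hence of the $\ell_1$-normalised eigenvectors (after fixing signs by continuous extension from the values at $z=0$), on the parameter $(x,z)$. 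A first-order Taylor expansion in $z$ at fixed $x$ yields the stated $O(z)$ asymptotics, with uniformity in $x\in[\delta,1-\delta]$ coming from compactness of this interval, the uniform spectral gap, and the fact that the $\ell_1$-norm remains bounded away from zero near each unperturbed eigenvector.

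The main thing to watch is the uniformity of the $O(z)$ constants over $x\in[\delta,1-\delta]$. If one prefers to avoid invoking abstract perturbation theory, the same conclusion can be obtained by inserting the ansatz $e=e^{(0)}+O(z)$ into $(M_p-\lambda I)e=0$ and solving under the constraint $e\in\mathcal{L}$: for $\lambda_0$, summing the first two rows of the eigenvector equation and using $e_1+e_2+e_3=0$ together with $\lambda_0=O(z)$ pins down the third coordinate as $O(z)$, and $\ell_1$-normalisation then forces the first two coordinates to be $\tfrac12+O(z)$ and $-\tfrac12+O(z)$; for $\lambda_{-1}$ an analogous manipulation exploiting $1+\lambda_{-1}=O(z)$ and the third row determines the first two coordinates up to $O(z)$, with normalisation again smooth since the $\ell_1$-norm stays bounded below.
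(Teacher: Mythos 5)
Your proposal is correct and follows essentially the same route as the paper: the eigenvalue asymptotics come from Taylor-expanding the trace and determinant identities in $z$, and the eigenvector asymptotics from identifying $e_0$ and $e_{-1}$ at $z=0$ and invoking smooth dependence of the eigenvectors on $K_{\delta,\e}$. Your added details (uniformity over $x\in[\delta,1-\delta]$ via the spectral gap, and the elementary ansatz alternative) simply flesh out what the paper leaves implicit.
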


\begin{proof}
The required asymptotics for the eigenvalues easily follows from the Taylor expansion of~\eqref{eigenvalues} with respect to $z$ around zero.  
For $p=(x,y,0)\in \partial\Sigma\setminus V$, the normalised eigenvector $e_{-1}(p)$ is given by~\eqref{e1}. Similarly,  a direct computation 
shows that $e_{0}(p)=\frac{1}{2}(1, -1, 0)$.  The asymptotics now follows from the smoothness of the eigenvectors on $K_{\delta, \e}$.
 \end{proof}
 
In what follows, we will replace $q$ by its coordinates with respect to the basis of eigenvectors $e_{-1}(p)$ and $e_0(p)$ centered at $q_{[p]}$, 
and see how those coordinates evolve under the dynamics. 
\smallskip

Let $p\in K_{\delta,\e}$. By Lemma~\ref{la0} each vector $q\in \Sigma$ has a unique representation
 \begin{align*}
 %\label{q_decomp}
 q=q_{[p]}+\alpha e_0(p)+\beta e_{-1}(p)
 \end{align*}
 with some $(\alpha,\beta)\in \R^2$. 
 \smallskip
 
 Let $r, s\in\mathcal{L}$ be parameters responsible for 
 error terms added to the dynamical system, and define
 the corresponding one-step iteration
 \begin{align}
 \label{hatp}
 \hat p&= (1-\rho)p+\rho(\one-q-M_p q)+s,\\
 \hat q&=M_p q+r.
 \label{hatq}
 \end{align}
Assuming that $\hat p\in K_{\delta, \e}$, 
denote by $(\hat \alpha,\hat \beta)$ the corresponding coordinates of 
 $\hat q$ with respect to the eigenvectors at the point $\hat p$. 
 Comparing it with the representation obtained from~\eqref{hatq} and using $M_pq_{[p]}=q_{[p]}$ we obtain   
 \begin{align}
 \label{qbar_decomp}
 \hat q=q_{[\hat p]}+\hat \alpha e_0(\hat p)+\hat \beta e_{-1}(\hat p) 
 = q_{[p]}+\alpha \lambda_0(p)e_0(p)+\beta \lambda_{-1}(p)e_{-1}(p)+r.
 \end{align}
Using~\eqref{hatp} we get 
\begin{align}
\label{qqqq}
q_{[p]}-q_{[\hat p]}
&=q_{[p]}-\frac{\one-\hat p}{2}
=\rho \Big[q_{[p]}-\frac{q+M_p q}{2}\Big]+\frac{s}{2}
=-\frac{\rho}{2} (I+M_p)(q-q_{[p]})+\frac{s}{2}
\notag\\
&
=-\frac{\alpha\rho}{2} (1+\lambda_0(p))e_0(p)-\frac{\beta\rho}{2} (\lambda_{-1}(p)+1)e_{-1}(p)+\frac{s}{2}.
\end{align}
Combining this with~\eqref{qbar_decomp} we 
obtain a system of three linear equations  for the two variables $\hat\alpha$ and $\hat\beta$:
 \begin{align*}
 \hat \alpha e_0(\hat p)+\hat \beta e_{-1}(\hat p) =
 \alpha \Big[\Big(1-\frac{\rho}{2}\Big)\lambda_0(p)-\frac{\rho}{2} \Big]e_0(p)
 +\beta \Big[\Big(1-\frac{\rho}{2}\Big)\lambda_{-1}(p)-\frac{\rho}{2} \Big]e_{-1}(p)
 +r+\frac{s}{2}.
 \end{align*}
We can use any two equations $i\neq j$ out of the three to solve the system. Via Cramer's rule we obtain 
\begin{align}
\label{eq_ab1}
\hat\alpha =D^{-1}
\Big\{ &\alpha\Big[\Big(1-\frac{\rho}{2}\Big)\lambda_0(p)-\frac{\rho}{2} \Big]
D_{\hat\alpha\alpha}
+\beta \Big[\Big(1-\frac{\rho}{2}\Big)\lambda_{-1}(p)-\frac{\rho}{2} \Big]
D_{\hat\alpha\beta}
+O(r,s)\Big\},\\
\hat\beta =
D^{-1}\Big\{
&\alpha\Big[\Big(1-\frac{\rho}{2}\Big)\lambda_0(p)-\frac{\rho}{2} \Big]
D_{\hat\beta\alpha}
+\beta\Big[\Big(1-\frac{\rho}{2}\Big)\lambda_{-1}(p)-\frac{\rho}{2} \Big]
D_{\hat\beta\beta}
+O(r,s)\Big\},
\label{eq_ab2}
\end{align}
where 
\begin{align*}
D= \left|\begin{array}{cc}
e_0^{\ssup i}(\hat p) & e_{-1}^{\ssup i}(\hat p)\\
e_0^{\ssup j}(\hat p) & e_{-1}^{\ssup j}(\hat p)
\end{array}\right|
\end{align*}
and 
\begin{align*}
D_{\hat\alpha\alpha}
&=\left|\begin{array}{cc}
e^{\ssup i}_0(p) & e_{-1}^{\ssup i}(\hat p)\\
e^{\ssup j}_0(p) & e_{-1}^{\ssup j}(\hat p)
\end{array}\right|, 
\qquad D_{\hat\alpha\beta}
=\left|\begin{array}{cc}
e^{\ssup i}_{-1}(p) & e_{-1}^{\ssup i}(\hat p)\\
e^{\ssup j}_{-1}(p) & e_{-1}^{\ssup j}(\hat p)
\end{array}\right|, \\
D_{\hat\beta\alpha}
&=\left|\begin{array}{cc}
e_{0}^{\ssup i}(\hat p) & e^{\ssup i}_{0\phantom{-}}(p) \\
e_{0}^{\ssup j}(\hat p) & e^{\ssup j}_0(p) 
\end{array}\right|, 
\qquad D_{\hat\beta\beta}
=\left|\begin{array}{cc}
e_{0\phantom{-}}^{\ssup i}(\hat p) & e^{\ssup i}_{-1}(p)\\
e_{0}^{\ssup j}(\hat p) & e^{\ssup j}_{-1}(p) 
\end{array}\right|.
\end{align*}
Observe that the absolute values of the determinants do not depend on the choice of $i\neq j$. Indeed, each determinant is 
a coordinate (with a plus or a minus sign depending on $i,j$) of the vector product of the two eigenvectors used to build it. The choice of the coordinate depends on $i,j$ but the coordinates themselves are all equal since the vector product is orthogonal to $\mathcal{L}$ where both eigenvectors lie. The difference in sign will be cancelled by the fact that the same sign will be used in all determinants. 
\smallskip

Denote $p=(x,y,z)$ and $\hat p=(\hat x, \hat y, \hat z)$. Let $u=(x,z,\alpha,\beta,r,s)$. 
We will now find the asymptotics of the dynamical system with respect to the small parameters $z, \alpha, s, r$ and, for the case $\theta=0$, 
also $\beta$.

\begin{lemma} 
\label{newas}
The following asymptotics hold on $K^{\star}_{\theta,\delta,\e}$. 

(a) If $\theta>0$ then 
\begin{align*}
\hat x 
&=x-\frac{1}{2}\rho\alpha+\phi_1(u),
\qquad\qquad
\phantom{aaaa}\hat z =z(1+\rho\beta)+\phi_2(u),\\
\hat \alpha
&=-\frac 1 2 \rho (1-\beta)\alpha+\phi_3(u),
\qquad \qquad
\hat \beta =-\beta+\phi_4(u),
\end{align*}
where $\phi_1(u)=O(z,s)$, $\phi_2(u)=O(\alpha z,z^2,s)$, $\phi_3(u)=O(z,s,r)$ and $\phi_4(u)=O(z,s,r)$.
\smallskip

(b) If $\theta=0$ then
\begin{align*}
\hat x 
&=x-\rho(1-x)\beta z+\phi_1(u),
\qquad\qquad
\hat z 
=z(1+\rho\beta)+\phi_2(u),\\
\hat \alpha
&=-\frac 1 2 \rho \alpha+\phi_3(u),
\qquad\qquad
\phantom{aaaaaaa}\hat \beta 
=-\beta(1-(2-\rho)z)+\phi_4(u),
\end{align*}
where $\phi_1(u)=O(\alpha,\beta z^2,s)$, $\phi_2(u)=O(\alpha z,\beta z^2,s)$,  $\phi_3(u)=O(\alpha^2, \alpha z, \alpha\beta, z\beta^2,s,r)$ and\\
 $\phi_4(u)=O(\alpha^2, \alpha\beta, \beta^2 z,\beta z^2,s,r)$.
\end{lemma}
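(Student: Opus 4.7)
The plan is to derive closed-form expressions for $\hat p - p$ and $\hat q - q_{[\hat p]}$ via the eigendecomposition provided by Lemma~\ref{la0}, and then read off the required asymptotics using the sharper expansions of Lemma~\ref{la}. Writing $q - q_{[p]} = \alpha e_0(p) + \beta e_{-1}(p)$ and using $M_p q_{[p]} = q_{[p]}$, I obtain
\begin{align*}
\hat p - p = -\rho\alpha(1+\lambda_0(p)) e_0(p) - \rho\beta(1+\lambda_{-1}(p)) e_{-1}(p) + s
\end{align*}
from~\eqref{hatp}, and combining this with the identity $q_{[p]} - q_{[\hat p]} = \tfrac{1}{2}(\hat p - p)$ and~\eqref{hatq} yields
\begin{align*}
\hat q - q_{[\hat p]} = \alpha c_0(p) e_0(p) + \beta c_{-1}(p) e_{-1}(p) + \tfrac{s}{2} + r,
\end{align*}
where $c_i(p) = (1-\tfrac{\rho}{2})\lambda_i(p) - \tfrac{\rho}{2}$. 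By Lemma~\ref{la}, $c_0(p) = -\tfrac{\rho}{2} + O(z)$ and $c_{-1}(p) = -1 + (2-\rho)z + O(z^2)$.

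Inserting Lemma~\ref{la} into the formula for $\hat p - p$ and reading off the first and third coordinates yields, before case-splitting,
\begin{align*}
\hat x - x = -\tfrac{\rho}{2}\alpha - \rho(1-x)\beta z + O(\alpha z, \beta z^2) + s^{\ssup{1}}, \qquad \hat z - z = \rho\beta z + O(\alpha z, \beta z^2) + s^{\ssup{3}}.
\end{align*}
In case (a), $|\beta|$ is bounded away from zero, so $O(\beta z) \subseteq O(z)$ and the stated $\phi_1, \phi_2$ follow at once; in case (b), the smallness of $\beta$ instead absorbs $-\tfrac{\rho}{2}\alpha$ into the error $\phi_1 = O(\alpha, \beta z^2, s)$, while $\phi_2$ is already of the required form.

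For $\hat\alpha, \hat\beta$, I would decompose $e_i(p) = A_i e_0(\hat p) + B_i e_{-1}(\hat p)$ in the basis $(e_0(\hat p), e_{-1}(\hat p))$ of $\mathcal{L}$, so that the identity for $\hat q - q_{[\hat p]}$ translates into
\begin{align*}
\hat\alpha = \alpha c_0(p) A_0 + \beta c_{-1}(p) A_{-1} + O(s, r), \qquad \hat\beta = \alpha c_0(p) B_0 + \beta c_{-1}(p) B_{-1} + O(s, r).
\end{align*}
The crucial observation from Lemma~\ref{la} is that $e_0$ depends on $p$ only through an $O(z)$ correction, whereas $e_{-1}(p) - e_{-1}(\hat p)$ has the explicit leading part $(\hat x - x) e_0(\hat p)$ coming from the $x$-dependence $\tfrac{1}{2}(1-x, x, -1)$; combined with $\hat p - p = O(\alpha, \beta z, s)$ this gives $A_0 = 1 + O(\alpha z, \beta z, s)$, $B_0 = O(\alpha z, \beta z, s)$, $A_{-1} = (\hat x - x) + O(\alpha z, \beta z^2, s)$ and $B_{-1} = 1 + O(\alpha z, \beta z, s)$. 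Substituting the expansion of $\hat x - x$, in case (a) the cross term $\beta c_{-1}(p) A_{-1}$ produces the contribution $\tfrac{\rho}{2}\alpha\beta$, which combined with $\alpha c_0(p) A_0 = -\tfrac{\rho}{2}\alpha + O(\alpha z)$ yields the $(1-\beta)$ factor in $\hat\alpha$; in case (b) the smallness of $\beta$ relocates the cross-coupling into the error, but the order-$z$ term in $c_{-1}(p)$ must be preserved to produce the contractive factor $1 - (2-\rho)z$ in $\hat\beta$.

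The main obstacle is disciplined error bookkeeping. The parameters $(z, \alpha, \beta, s, r)$ contribute at several different scales, and the split between $\theta > 0$ and $\theta = 0$ drives the grouping: for case (a), bounded $|\beta|$ collapses $\beta$-dependent quantities into the lower-order errors $O(z, s, r)$, whereas for case (b) the combinations $\alpha^2$, $\alpha\beta$, $\beta^2 z$ and $\beta z^2$ must all be tracked explicitly, as they capture the nonlinear coupling through which orbits approach the boundary equilibrium. Verifying the inclusions of each error contribution into the specified $\phi_j$ in each regime is where the bulk of the work lies.
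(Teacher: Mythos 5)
Your proposal is correct and follows essentially the same route as the paper: the same explicit formula for $\hat p-p$ via the eigendecomposition and Lemma~\ref{la}, and the same change of basis from the eigenvectors at $p$ to those at $\hat p$ (the paper organises this via Cramer's rule with determinants $D, D_{\hat\alpha\alpha},\dots$, which is just your coefficients $A_i, B_i$ in disguise), including the identical key points that the cross term through $A_{-1}\approx \hat x - x$ produces the factor $(1-\beta)$ in case (a) and that the $(2-\rho)z$ term of $c_{-1}(p)$ must be retained in case (b). The only quibble is that your error for $A_{-1}$ should be $O(\alpha z,\beta z,s)$ rather than $O(\alpha z,\beta z^2,s)$, which changes nothing since all resulting contributions still land inside the stated $\phi_j$.
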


\begin{proof} Using~\eqref{qqqq} together with the definition of $q_{[p]}$ and $q_{[\hat p]}$ we get  
\begin{align*}
\hat p =p-\rho\alpha (1+\lambda_0(p)) e_0(p)-\rho\beta (1+\lambda_{-1}(p)) e_{-1}(p)+s.
\end{align*}
This implies the required asymptotics for $\hat x$ and $\hat z$ by Lemma~\ref{la} for both cases (a) and (b). 
For the asymptotics of $\hat\alpha$ and $\hat\beta$ we consider the cases separately. 
\smallskip
 
 (a) Denote $e_0=\frac 1 2 (1,-1,0)$ and $e_{-1}(x)=\frac 1 2 (1-x,x,-1)$. Using the asymptotics for $\hat x$ and $\hat z$   together with Lemma~\ref{la}
we have 
\begin{align}
\label{e01}
e_0(\hat p)&=e_0+O(\hat z)=e_0+O(z,s)\\
e_{-1}(\hat p)&=e_{-1}(\hat x)+O(\hat z)=e_{-1}(x)-(\hat x-x)e_0+O(z,s)=e_{-1}(x)+\frac{\rho\alpha}{2}e_0+O(z,s).
\label{e11}
\end{align}
Fix some $i\neq j$ and denote
\begin{align*}
\Delta=\left|\begin{array}{cc}
e_0^{\ssup i} & e_{-1}^{\ssup i}(x)\\
e_0^{\ssup j} & e_{-1}^{\ssup j}(x)
\end{array}\right|=\pm \frac 1 4,
\end{align*}
where the sign depends on the choice of $i,j$. 
Substituting~\eqref{e01} and~\eqref{e11}  we obtain
\begin{align*}
D=\Delta+O(z,s)
\end{align*}
as the term $\frac{1}{2}\rho\alpha$ from~\eqref{e11} vanishes due to the corresponding determinant being zero. 
Similarly, we use ~\eqref{e01} and~\eqref{e11}  as well as Lemma~\ref{la} for $e_0(p)$ and $e_{-1}(p)$ to compute 
\begin{align*}
D_{\hat\alpha\alpha}&=\Delta+O(z,s),
&D_{\hat\alpha\beta}
&=-\frac{\rho\alpha}{2}\Delta
+O(z, s),\\
D_{\hat\beta\alpha}
&=O(z, s),
& D_{\hat\beta\beta}
&=\Delta+O(z, s).
\end{align*}
Further, by Lemma~\ref{la} we have
\begin{align*}
\Big(1-\frac{\rho}{2}\Big)\lambda_0(p)-\frac{\rho}{2}=-\frac{\rho}{2}+O(z) 
\quad\text{and}\quad 
\Big(1-\frac{\rho}{2}\Big)\lambda_{-1}(p)-\frac{\rho}{2}=-1+O(z).
\end{align*}
Substituting everything into ~\eqref{eq_ab1} and~\eqref{eq_ab2} 
we obtain the required formulas. 
\smallskip

(b) Observe that $\hat p=p+O(\alpha,\beta z, s)$ due to the asymptotics for $\hat x$ and $\hat z$. 
Hence the ratios of the determinants in~\eqref{eq_ab1} and~\eqref{eq_ab2} are equal to either one 
%(in the term next to $\alpha$ in~\eqref{eq_ab1} and in the term next to $\beta$ in~\eqref{eq_ab2}) 
or zero 
%(in the term next to $\beta$ in~\eqref{eq_ab1} and in the term next to $\alpha$ in~\eqref{eq_ab2}), 
up to the error term $O(\alpha,\beta z, s)$. Combining this with Lemma~\ref{la}  providing the 
asymptotics for $\lambda_0(p)$ and
\begin{align*}
\Big(1-\frac{\rho}{2}\Big)\lambda_{-1}(p)-\frac{\rho}{2}
%=\Big(1-\frac{\rho}{2}\Big)(-1+2z+O(z^2))-\frac{\rho}{2}
=-1+z(2-\rho)+O(z^2).
\end{align*}
we obtain the required formulas for $\hat\alpha$ and $\hat\beta$. 
\end{proof}

Now we are ready to apply the asymptotic analysis to the sequence $(\Theta_n, \pi_n)$. Denote $\Theta_n=(x_n,y_n,z_n)$ and let $(\alpha_n,\beta_n)$ be the coordinates of $v_n\in \mathcal{L}$ with respect to the eigenvectors $e_0(\Theta_n)$, 
$e_{-1}(\Theta_n)$ whenever 
%$\Theta_n\in K_{\delta,\e}$ and 
those are well-defined. Denote $r_n=R_{n+1}$ and 
$s_n=S_{n+1}$, which is consistent with our assumptions as~\eqref{iter_lim_pi} and ~\eqref{iter_lim_th} entail that $R_{n+1},S_{n+1}\in\mathcal{L}$. Finally, denote $u_n=(x_n,z_n,\alpha_n,\beta_n,s_n,r_n)$.
\smallskip

%Let $\delta\in (0,\frac 1 4)$ and $\theta=0$. 
Recall that $\delta\in (0,\frac 1 4)$ and fix $\nu\in (1,\mu^{\frac 1 2})$. 
Let $(M,\e_0)$ be the bounding parameters of all $\phi_i$ in Lemma~\ref{newas}, where we assume $\e_0\in (0,\frac 1 3)$ and 
$M\ge 100$. 
% and $\e_0<\frac{1}{3}$. Recall that we also have the parameters $\rho\in (0,1)$ and $\nu\in (0,\sqrt \mu)$. 
\smallskip

Let $\gamma\in (\nu^{-\frac 1 2},1)$ be sufficiently close to one, $\eta>0$ be sufficiently small, and $\e\in (0,\e_0)$ be sufficiently small, where each of these 
parameters may depend on all previously introduced ones. Those assumptions will be extensively used in the sequel.
\smallskip

Let $m$ be such that
\begin{align}
\label{inside0}
x_m\in [2\delta, 1-2\delta],\quad z_m<\e^2,\quad |\alpha_m|<\e^2,\quad |\beta_m\pm \theta|<\e^2
\end{align} 
where $\pm$ is understood to be $+$ if the sign of $\beta_m$ is negative and $-$ otherwise, and
\begin{align}
\label{inside_error}
\Vert r_n\Vert +\Vert s_n\Vert <\nu^{-n}<\e^4
\qquad\text{for all }n\ge m.
\end{align}
Observe that we do not claim at this stage that such $m$ exists, but it will easily follow from~Lemma~\ref{limit_points} and Proposition~\ref{p_error}
when we need it later on in Propositions~\ref{p_lpos} and~\ref{p_pos2}. 
\smallskip

It is easy to see that due to $\gamma\nu>1$~\eqref{inside_error} implies, 
for all $n\ge m$, that 
\begin{align}
\label{rs}
\Vert r_n\Vert +\Vert s_n\Vert <\e^4\gamma^{n-m}.
\end{align}

Let 
\begin{align*}
\tau=\inf\{n> m: u_n\not\in K^{\star}_{\theta,\delta,\e}\}.
\end{align*}

In the following two lemmas (serving the edge equilibrium two-cycle case and the edge equilibrium case, respectively) we show that if the dynamical system gets well inside~$K^{\star}_{\theta, \delta,\e}$ then it will stay there forever. 
Moreover, we estimate the decay rate of the small parameters and of the increments of $(x_n)$. 

\begin{lemma} 
\label{pall}
Suppose $\theta>0$ and assume that
$m$ satisfying~\eqref{inside0} and~\eqref{inside_error} exists. Then $\tau=\infty$ and for all $n\ge m$
\begin{align}
z_n & < \eta\e\gamma^{n-m} \label{pz}\\
|\beta_n\pm \theta(-1)^{n-m}| & <\e \label{pb}\\
|\alpha_n|&< \sqrt{\eta} \e \gamma^{n-m} \label{pa}\\
|x_{n+1}-x_n|&<   M\e\gamma^{n-m}\label{px}
\end{align} 
\end{lemma}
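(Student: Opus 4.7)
I would prove the four estimates together with the membership $u_n \in K^\star_{\theta,\delta,\e}$ by joint induction on $n \ge m$. Maintaining $u_n \in K^\star_{\theta,\delta,\e}$ through all $n \ge m$ is precisely what forces $\tau = \infty$; the base case is \eqref{inside0}. Each inductive step applies Lemma~\ref{newas}(a) to $u_n$ together with the exponential error bound \eqref{rs}, which requires $u_n \in K^\star_{\theta,\delta,\e}$ to be valid.

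The $\beta$-bound is the simplest: from $\hat\beta = -\beta + \phi_4$ with $\phi_4 = O(z,s,r)$, writing $\delta_n = \beta_n \pm \theta(-1)^{n-m}$ gives $\delta_{n+1} = -\delta_n + \phi_4(u_n)$, hence
\begin{equation*}
|\delta_n| \le |\delta_m| + \sum_{k=m}^{n-1}|\phi_4(u_k)| \le \e^2 + \frac{M(\eta\e + \e^4)}{1-\gamma},
\end{equation*}
using the inductive $z$-bound and \eqref{rs}. This stays below $\e$ once $\eta$ is chosen small relative to $(1-\gamma)/M$.

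The $z$-bound is the heart of the argument. From $\hat z = z(1+\rho\beta) + \phi_2$, pairing two consecutive steps and using $\beta_{n+1} \approx -\beta_n$ gives $(1+\rho\beta_n)(1+\rho\beta_{n+1}) \approx 1 - \rho^2\theta^2 < 1$, so the telescoped product $\prod_{k=m}^{n-1}(1+\rho\beta_k)$ is dominated by $(1+\rho\theta)(1-\rho^2\theta^2)^{(n-m-1)/2}$, modulo the $\phi_2$-forcing. Choosing $\gamma$ close enough to one that $\gamma^2 > 1 - \rho^2\theta^2$, and using $z_m < \e^2 \ll \eta\e$, one deduces $z_n < \eta\e\gamma^{n-m}$ for all $n$ up to $\tau$. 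The $\alpha$-bound follows from $\hat\alpha = -\tfrac12\rho(1-\beta)\alpha + \phi_3$: the contraction factor $\tfrac12\rho|1-\beta| \le \tfrac12\rho(1+\theta) + O(\e)$ is strictly below $\gamma$ for $\gamma$ close to one, and the forcing $\phi_3 = O(z,s,r)$ is $\gamma$-geometric; direct iteration (in the spirit of Lemma~\ref{lemma_notgeom}) yields $|\alpha_n| \le \gamma^{n-m}(|\alpha_m| + O(\eta\e)) < \sqrt{\eta}\,\e\gamma^{n-m}$. Finally, $|x_{n+1}-x_n| \le \tfrac12\rho|\alpha_n| + |\phi_1|$ with $\phi_1 = O(z,s)$ gives the fourth estimate, and summing yields $|x_n - x_m| \le M\e/(1-\gamma) < \delta$, so $x_n \in [\delta, 1-\delta]$; combined with the other three bounds, this closes $u_{n+1} \in K^\star_{\theta,\delta,\e}$.

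The main obstacle is the $z$-bound: no per-step contraction by $\gamma$ is available because $1+\rho\beta_n > 1$ when $\beta_n > 0$. One must genuinely exploit the alternation of $\beta$ (itself only approximate and drifting at rate $\phi_4$) to pair consecutive steps and extract the net contraction $1-\rho^2\theta^2$. The delicate balancing of parameters---first $\gamma$ close to one so that $\gamma^2 > 1 - \rho^2\theta^2$ and $\rho < \gamma$, then $\eta$ small relative to $(1-\gamma)/M$ to absorb accumulated $\beta$-errors, and finally $\e$ small relative to $\eta\delta$ to keep $x_n$ in $[\delta, 1-\delta]$---is what makes the induction close.
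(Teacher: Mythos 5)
Your proposal is correct and follows essentially the same route as the paper: a joint induction maintaining $u_n\in K^{\star}_{\theta,\delta,\e}$ (the paper phrases this as estimates up to $\tau$ plus a contradiction showing $\tau=\infty$), with the $\beta$-bound by telescoping the drift $\phi_4$, the $z$-bound by pairing consecutive steps to extract the two-cycle contraction $1-\rho^2\theta^2<\gamma^2$, the $\alpha$-bound by geometric contraction with factor $\approx\tfrac{\rho}{2}(1+\theta)<\gamma$, and the $x$-estimate summed to stay within $\delta$ of $x_m$. The parameter ordering ($\gamma$, then $\eta$, then $\e$) and the identified obstacles match the paper's argument.
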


\begin{proof} 
We will first prove the estimates for  all $m\le n\le \tau$ and then show that $\tau=\infty$. 
\smallskip

(a) 
We will prove~\eqref{pz} by induction. It is clearly true for $n=m$\gnote{as long as $\e<\eta$}\nnote{ok}, and it is true for $n=m+1$  since\gnote{as long as $\e<\frac{1}{3M}$ and $\e<\frac{\gamma\eta}{4}$} \nnote{ok}
due to Lemma~\ref{newas} and $u_m\in K^{\star}_{\theta,\delta,\e}$ we have 
\begin{align*}
%\label{zm1}
z_{m+1}
&\le z_m(1+\rho|\beta_m|)+M\big(|\alpha_m|z_m+z_m^2+\Vert s_m\Vert \big)
\le 3\e^2+3M\e^4<4\e^2<\gamma\eta\e.
\end{align*}
Suppose~\eqref{pz}  is true for some $m\le n\le \tau-2$ and let us show that it is true for $n+2$. 
\smallskip

For $i\in\{n,n+1\}$ using Lemma~\ref{newas}, $u_i\in K^{\star}_{\theta,\delta,\e}$ and~\eqref{rs} we have 
%For any $m\le i< \tau$ we have by Lemma~\ref{zeroth1}, \eqref{phi2}, $u_i\in K^{\star}_{\delta,\e}$
\begin{align}
\label{kokoz}
z_{i+1}
\le z_i(1+\rho\beta_i)+M\big(|\alpha_i|z_i+z_i^2+\Vert s_i\Vert \big)
\le z_i(1+\rho\beta_i+2M\e)+M\e^4\gamma^{i-m}
\end{align}
and 
\begin{align}
\label{kokob}
\beta_{i+1}
&\le -\beta_i+M(z_i+\Vert s_i\Vert +\Vert r_i\Vert )\le -\beta_i+2M\e. 
\end{align}
Using~\eqref{kokoz} twice we obtain
\begin{align*}
z_{n+2}
%&\le z_{n+1}(1+\rho\beta_{n+1}+2M\e)+M\nu^{-n-1}\\
&\le  z_n(1+\rho\beta_n+2M\e)(1+\rho\beta_{n+1}+2M\e)+5M\e^4\gamma^{n-m}. 
\end{align*}
Observe that both terms in the brackets are positive. 
Hence we can substitute~\eqref{kokob} and get
\begin{align*}
z_{n+2}
&\le  z_n(1+\rho\beta_n+2M\e)(1-\rho\beta_n+4M\e)+5M\e^4\gamma^{n-m}\\
&\le  z_n(1\pm \rho \theta +3M\e)(1\mp \rho \theta +5M\e)+5M\e^4\gamma^{n-m}\\
%&\le  z_n(1-\rho^2 \theta^2 +15 M\e)+5M\nu^{-n}\\
%&\le  \eta\e\gamma^{n-m}\big[1-\rho^2 \theta^2 +15 M\e+5M\eta^{-1}\e^3(\gamma\nu)^{m-n}\big]\\
%&\le  \eta\e\gamma^{n-m}\big[1-\rho^2 \theta^2 +15 M\e+5M\eta^{-1}\e (\gamma e^{\nu})^{m-n}\big]\\
&\le \eta\e\gamma^{n-m}\big[1-\rho^2 \theta^2 +20 M\e\big]
\le \eta\e\gamma^{n+2-m}.
\end{align*}
\gnote{as long as $\gamma>\sqrt{1-\gamma^2}$ and $\e<\frac{\rho^2\theta^2-(1-\gamma^2)}{20M}$.}\nnote{ok}
\smallskip

(b) We prove~\eqref{pb} and~~\eqref{pa} by induction. Both statements are clearly true for $n=m$\gnote{as long as $\e<\sqrt{\eta}$}\nnote{ok}. Suppose they are true for some $m\le n< \tau$ 
and let us show that they are true for $n+1$. 
Using Lemma~\ref{newas}, \eqref{rs}, \eqref{pz},  and $u_n\in K^{\star}_{\theta, \delta,\e}$ we obtain\gnote{as long as $\eta<\frac{1-\gamma}{3M}$}  \nnote{ok}
\begin{align*}
|\beta_{n+1}\pm \theta(-1)^{n+1-m}|
&\le |\beta_{n}\pm \theta(-1)^{n-m}|+M(z_n+\Vert s_n\Vert +\Vert r_n\Vert )\\
&\le |\beta_{n}\pm \theta(-1)^{n-m}|+M\eta\e\gamma^{n-m}+M\e^4\gamma^{n-m}\\
&\le |\beta_{m}\pm \theta|+2M\eta\e\sum_{i=m}^{n}\gamma^{i-m}
\le \e^2+\frac{2M\eta\e}{1-\gamma}
<\e.
\end{align*}
and\gnote{as long as $\e<\rho<\gamma$ and $\eta<\min\{\frac{\gamma}{M},\left[\frac{2(\gamma-\rho)}{1+3M}\right]^2\}$} 
\nnote{ok}
\begin{align*}
|\alpha_{n+1}|
&\le \frac 1 2  |\alpha_n|\rho |1-\beta_n|+M(z_n+\Vert s_n\Vert +\Vert r_n\Vert )\\
&\le \frac 1 2 \sqrt{\eta} \e \gamma^{n-m}\rho(1+\theta+\e)+M\eta \e\gamma^{n-m}+M\e^4\gamma^{n-m}\\
&=\frac 1 2 \sqrt{\eta} \e \gamma^{n-m}\big[\rho(1+\theta+\e)+3M\sqrt{\eta}\big]
\le \sqrt{\eta} \e \gamma^{n+1-m}.
\end{align*}

(c) Using Lemma~\ref{newas}, \eqref{pz}, \eqref{pa} and $u_n\in K^{\star}_{\theta, \delta,\e}$ we have for all $m\le n< \tau$
\gnote{as long as $\eta<\frac{1}{3}$}\nnote{ok}
\begin{align*}
|x_{n+1}-x_n|
&\le M(|\alpha_n|+z_n+\Vert s_n\Vert )
%&\le M( \sqrt{\eta} \e \gamma^{n-m}+ \eta \e \gamma^{n-m}+e^{-\nu n})\\
\le M\e \gamma^{n-m}( \sqrt{\eta} + \eta  +\e^3)
\le M\e \gamma^{n-m}.
\end{align*}

(d) Suppose $\tau<\infty$. Observe that $z_{\tau}$, $\beta_{\tau}$, $\alpha_{\tau}$ satisfy the conditions to belong to 
$K_{\theta,\delta,\e}^{\star}$ by~\eqref{pz}, \eqref{pb} and~\eqref{pa} with $n=\tau$, respectively,  and so do $s_{\tau}$, $r_{\tau}$. It remains to show that $x_{\tau}$ 
satisfies the condition as well, thus contradicting the definition of $\tau$. 
By~\eqref{px} we have
\gnote{as long as $\e<\frac{\delta(1-\gamma)}{M}$} 
\nnote{ok}
\begin{align*}
|x_{\tau}-x_m|\le \sum_{n=m}^{\tau-1}|x_{n+1}-x_n|\le M\e\sum_{n=m}^{\tau-1}\gamma^{n-m}
\le \frac{M\e}{1-\gamma}<\delta.
\end{align*}
Since $x_m\in [2\delta,1-2\delta]$ this implies that $x_{\tau}\in [\delta,1-\delta]$.
\end{proof}

\begin{lemma} 
\label{double}
Suppose $\theta=0$ and assume that
$m$ satisfying~\eqref{inside0} and~\eqref{inside_error} exists. Then $\tau=\infty$ and 
%For all $m\le n\le \tau$,
\begin{align}
\label{maxi}
|\alpha_n|&<\max\big\{ z_n |\beta_n|, \e^2\gamma^{n-m}\big\}\\
|x_{n+1}-x_n|&<3M \max\big\{ z_n |\beta_n|, \e^2\gamma^{n-m}\big\}
\label{0xx}
\end{align}
for all $n\ge m$. Furthermore, 
\begin{align}
\label{serser}
\sum_{n=m}^{\infty}z_n|\beta_n|<\infty.
\end{align}
\end{lemma}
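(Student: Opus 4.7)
The plan is to establish all four conclusions by a simultaneous induction on $n\ge m$: that $u_n\in K^{\star}_{0,\delta,\e}$ (which yields $\tau=\infty$), the pointwise bounds~\eqref{maxi} and~\eqref{0xx}, and a uniform bound on the partial sums $\sum_{k=m}^n z_k|\beta_k|$ that implies~\eqref{serser}. The driving force is Lemma~\ref{newas}(b): $\alpha$ contracts sharply with rate $\rho/2$, $\beta$ contracts only weakly with rate $1-(2-\rho)z$, $z$ drifts by a factor $(1+\rho\beta)$, and the $x$-increments are of size $O(z|\beta|,\alpha,s)$. Hence the quantity to control is $\sum_n z_n|\beta_n|$, which governs both the drift of $x_n$ and the growth of $z_n$.

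The heart of the argument is a telescoping estimate on $|\beta_n|$. From the $\hat\beta$ asymptotic, absorbing the $O(\beta^2 z,\beta z^2)$ contributions of $\phi_4$ into the contraction factor using the inductive bounds $z_n,|\beta_n|\le\e$, rearranging gives
\begin{equation*}
\tfrac{2-\rho}{2}\, z_n|\beta_n| \le |\beta_n|-|\beta_{n+1}|+M\bigl(\alpha_n^2+|\alpha_n\beta_n|\bigr)+M\bigl(\|s_n\|+\|r_n\|\bigr).
\end{equation*}
Summing in $n$ and using $|\beta_m|<\e^2$ together with~\eqref{rs} reduces matters to controlling $\sum_n|\alpha_n|$, since $\alpha_n^2+|\alpha_n\beta_n|\le 2\e|\alpha_n|$.

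To bound $\sum_n|\alpha_n|$, I iterate the $\alpha$-recursion $|\alpha_{n+1}|\le(\rho/2+C\e)|\alpha_n|+Mz_n\beta_n^2+M(\|s_n\|+\|r_n\|)$, choosing $\gamma$ close enough to one that the contraction rate $c'=\rho/2+C\e$ lies below $\gamma$; interchanging the order of summation then yields
\begin{equation*}
\sum_n|\alpha_n|\le\frac{|\alpha_m|}{1-c'}+\frac{M\e}{1-c'}\sum_n z_n|\beta_n|+\frac{M\e^4}{(1-c')(1-\gamma)}.
\end{equation*}
Substituting back produces a self-improving inequality on $\sum_n z_n|\beta_n|$; for $\e$ small enough the coefficient on the right drops below $(2-\rho)/4$, so the series absorbs into itself and $\sum z_n|\beta_n|$ is finite, of size $O(\e^2)$. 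Summing~\eqref{0xx} then forces $|x_n-x_m|<\delta$, so $x_n\in[\delta,1-\delta]$; a logarithmic analysis of $z_{n+1}/z_n=1+\rho\beta_n+O(\e z_n,\|s_n\|)$ combined with partial sums of $\beta_n$ staying $O(\e)$ (from the near-alternation $\beta_{n+1}\approx-\beta_n$) keeps $z_n<\e$; and $|\beta_n|$, $|\alpha_n|$ remain below $\e$ by direct iteration of their recursions against summable source terms.

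The main obstacle will be the pointwise bound~\eqref{maxi}. To propagate $|\alpha_{n+1}|<\max\{z_{n+1}|\beta_{n+1}|,\e^2\gamma^{n+1-m}\}$ from the corresponding bound at $n$, I split into cases depending on which of the two quantities is dominant. In the geometric regime, where $\e^2\gamma^{n-m}\ge z_n|\beta_n|$, the contraction $c'<\gamma$ directly propagates the $\e^2\gamma^{n-m}$ estimate, while the source term $Mz_n\beta_n^2\le M\e z_n|\beta_n|$ is subdominant. In the $z|\beta|$-dominated regime, I exploit the proximity of $z_{n+1}|\beta_{n+1}|$ to $z_n|\beta_n|$ (both related by a multiplicative factor $1+O(\e)$ coming from the $\hat z$ and $\hat\beta$ asymptotics) together with the strict contraction $\rho/2+O(\e)<1$ to close the induction. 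Finally,~\eqref{0xx} follows at each step by substituting the just-established bound~\eqref{maxi} into the asymptotic $|\hat x-x|\le\rho z|\beta|+M(|\alpha|+\beta z^2+\|s\|)$.
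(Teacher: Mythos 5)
Your proposal is essentially correct, and for the two pointwise bounds it coincides with the paper: the induction for~\eqref{maxi} with the case split on which term of the maximum dominates (geometric regime closed by $\rho/2+O(\e)<\gamma$, the $z|\beta|$-regime closed by the lower bounds $z_{n+1}\ge(1-O(\e))z_n$, $|\beta_{n+1}|\ge(1-O(\e))|\beta_n|$) and the derivation of~\eqref{0xx} from~\eqref{maxi} via the $\hat x$ asymptotics are exactly the paper's steps (a)--(b). Where you genuinely diverge is the summability of $\sum z_n|\beta_n|$ and the proof that $\tau=\infty$. The paper introduces the stopping time $\eta=\inf\{n\ge m: z_n|\beta_n|\ge\e^2\gamma^{n-m}\}$, treats the pre-$\eta$ phase by geometric sums, proves that the $z|\beta|$-dominated regime persists after $\eta$, establishes that $(\beta_n)$ alternates in sign with $|\beta_n|$ decreasing, and then bounds $\sum_{i\ge\eta}z_i|\beta_i|$ through an alternating telescoping sum of $\beta_{i+1}+\beta_i$. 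You instead telescope $|\beta_n|$ directly: since $|\beta_{n+1}|\le|\beta_n|\bigl(1-(2-\rho)z_n\bigr)+|\phi_4(u_n)|$ and $0<(2-\rho)z_n<1$ on $K^{\star}_{0,\delta,\e}$, each step spends at least $\tfrac{2-\rho}{2}z_n|\beta_n|$ of the (total at most $|\beta_m|<\e^2$) budget, up to the $O(\alpha_n^2,|\alpha_n\beta_n|,\|s_n\|,\|r_n\|)$ errors, and these are closed by a finite-sum bootstrap between $\sum|\alpha_n|$ and $\sum z_n|\beta_n|$ with a coefficient $O(\e^2)$ that absorbs into the left-hand side. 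This is a valid and arguably more unified route: it needs no $\eta$-splitting, no persistence lemma, and no sign-alternation or monotonicity facts, and it delivers $\sum z_n|\beta_n|=O(\e^2)$ uniformly up to $\tau$, from which the $x$- and $z$-drift bounds give $u_\tau\in K^{\star}_{0,\delta,\e}$ and hence $\tau=\infty$, just as in the paper. What the paper's regime decomposition buys is slightly more explicit structural information (alternation and monotone decay of $|\beta_n|$ after $\eta$), but none of it is needed downstream in the $\theta=0$ case.

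Two small points to tighten. First, your ratio form $z_{n+1}/z_n=1+\rho\beta_n+O(\e z_n,\|s_n\|)$ is not literally correct, because the perturbation $\phi_2$ is additive, so the ratio error is $\|s_n\|/z_n$, which is not controlled when $z_n$ is tiny; use the additive bound $z_{n+1}\le z_n+\rho z_n|\beta_n|+M\bigl(|\alpha_n|z_n+|\beta_n|z_n^2+\|s_n\|\bigr)$ and sum it against your series estimates (this is what the paper does), which gives $z_n<\e$ without any logarithmic analysis. Second, your bootstrap constants require $\e^2/(1-\gamma)$ and $\e^4/(1-\gamma)$ to be small and $\rho/2+C\e<\gamma$; this is fine, but only because of the paper's parameter ordering in which $\e$ is chosen after $\gamma$ (and after $\delta$, $M$), so state that dependence explicitly when you write the proof out.
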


\begin{proof} 
(a) Let us prove~\eqref{maxi} by induction for all $m\le n\le \tau$.
It is clearly true for $n=m$. 
Suppose it is true for some $m\le n<\tau$. 
Using Lemma~\ref{newas}, $u_n\in K^{\star}_{0,\delta,\e}$ and~\eqref{rs} 
%as well as the choice of the parameters $\gamma,\nu,\e,m_0$ and $m$ 
we proceed as follows. 
\smallskip

Observe that
\gnote{as long as $\e\le\frac{\rho}{10M}$,} 
\nnote{ok}
\begin{align}
\label{maxim}
|\alpha_{n+1}|
&\le |\alpha_n|\frac{\rho}{2}+M(\alpha_n^2+|\alpha_n| z_n+|\alpha_n\beta_n|+z_n\beta_n^2+\Vert s_n\Vert +\Vert r_n\Vert )\notag\\
&\le |\alpha_n|\Big(\frac{\rho}{2}+3M\e\Big)+M\e z_n|\beta_n|+M\e^4\gamma^{n-m}
\le \rho\max\big\{z_n |\beta_n|, \e^2 \gamma^{n-m} \big\}.
\end{align}

If $\e^2\gamma^{n-m}$ is the largest term under the maximum  then we get
\gnote{as long as $\gamma>\rho$}
\nnote{ok} 
%for $\gamma$ sufficiently close to one
\begin{align*}
|\alpha_{n+1}|\le \e^2\gamma^{n+1-m}.
\end{align*}

Now consider the case when $z_n|\beta_n|$  is the largest term under the maximum. 
We have
\gnote{as long as $\e<\frac{1-\sqrt{\rho}}{4M}$} 
\nnote{ok}
\begin{align}
z_{n+1} 
%&\ge z_n(1-\rho|\beta_n|)-|\phi_2(u_n)|\notag\\
&\ge z_n(1-\rho |\beta_n|)-M(|\alpha_n| z_n+|\beta_n| z_n^2+\Vert s_n\Vert )
\ge z_n(1-\e)-M(2\e z_n+\e^2 \gamma^{n-m})\notag\\
&\ge z_n(1-\e)-M(2\e z_n+z_n |\beta_n|)
\ge z_n(1-4M\e)>\sqrt \rho z_n
\label{zzzz}
\end{align}
and
\gnote{as long as $\e<\frac{1-\sqrt{\rho}}{7M}$}
\nnote{ok}
\begin{align}
|\beta_{n+1}|
%&\ge |\beta_n|(1-z_n)-|\phi_4(u_n)|\notag\\
&\ge |\beta_n|(1-2z_n)-M(\alpha_n^2+|\alpha_n\beta_n|+z_n\beta_n^2+|\beta_n|z_n^2+\Vert s_n\Vert +\Vert r_n\Vert )\notag\\
&\ge |\beta_n|(1-2\e)-M\big(2\e|\alpha_n|+2\e|\beta_n|+\e^2 \gamma^{n-m}\big)\notag\\
&\ge |\beta_n|(1-2\e)-M(2\e z_n |\beta_n|+2\e|\beta_n|+\e^2 z_n|\beta_n|)
\ge |\beta_n|(1-7M\e)>\sqrt \rho |\beta_n|.
\label{bbbb}
\end{align}
Combining~\eqref{maxim}, ~\eqref{zzzz} and~\eqref{bbbb} we obtain 
%$|\alpha_{n+1}|\le z_{n+1}|\beta_{n+1}|$ completing the induction. 
\begin{align*}
|\alpha_{n+1}|\le z_{n+1}|\beta_{n+1}|
\end{align*}
completing the induction. 
\smallskip

(b) Let us prove~\eqref{0xx} by induction for all $m\le n< \tau$. For any $m\le n<\tau$ let us distinguish between the two cases according to where the maximum in~\eqref{maxi} is achieved. 
Using part (a), Lemma~\ref{newas} and $u_n\in K^{\star}_{0,\delta,\e}$ we obtain the following. 
\smallskip

If the second term is the largest then
\gnote{as long as $\e<\frac{1}{5M}$}
\nnote{ok}
\begin{align}
\label{3phin}
|\phi_1(u_n)|&\le 2M\e^2\gamma^{n-m}
\qquad\text{and}\qquad 
|\phi_i(u_n)|\le 5M\e^3\gamma^{n-m}<\e^2\gamma^{n-m}
\quad\text{ for }i\in\{2,3,4\}. 
\end{align}
In particular,
\begin{align}
\label{xinc}
|x_{n+1}-x_n|\le (1-x_n)z_n|\beta_n|+2M\e^2\gamma^{n-m}<3M\e^2\gamma^{n-m}.
\end{align}

If the first term is the largest then 
%using $\gamma^2\nu>1$ 
we have 
\begin{align*}
%\label{sr2}
\Vert s_n\Vert +\Vert r_n\Vert <\e^4 \nu^{m-n}<(\e^2 \gamma^{n-m})^2<  z_n^2 |\beta_n|
\end{align*}
and hence
\begin{align}
|\phi_1(u_n)|
\le 2M |\beta_n|z_n, 
\qquad
|\phi_2(u_n)|\le 3M |\beta_n| z_n^2,  
\qquad
|\phi_4(u_n)|\le 3Mz_n\beta_n^2+2M|\beta_n|z_n^2. 
\label{2phin}
\end{align} 
In particular,
\begin{align}
\label{xinc2}
|x_{n+1}-x_n|\le (1-x_n)z_n|\beta_n|+2M|\beta_n|z_n<3Mz_n|\beta_n|.
\end{align}
Combining~\eqref{xinc} and~\eqref{xinc2}, we obtain~\eqref{0xx} for all $m\le n<\tau$.
\smallskip

(c)
Let  
\begin{align*}
%\label{3phi}
\eta=\inf\big\{n\ge m: z_n|\beta_n|\ge \e^2\gamma^{n-m}\big\}. 
\end{align*}
%\smallskip
and let us show that if $\tau<\infty$ then $\eta<\tau$. 
\smallskip

Denote $\tau\wedge\eta=\min\{\tau,\eta\}$ and consider $m\le n<\tau\wedge\eta$. 
Using part (a), Lemma~\ref{newas} and \eqref{3phin} we have  
\begin{align}
|\alpha_{n+1}|
&\le |\alpha_n|\frac{\rho}{2}+\e^2\gamma^{n-m}
\le 2\e^2\gamma^{n-m}<\e.
\label{na}
\end{align}
%since  $\frac{\rho}{2}+5M\e<1$ and $\gamma<1$. 
%\smallskip
Additionally taking into account that $u_i\in K^{\star}_{0,\delta,\e}$ for all $m\le i\le n$ we also have 
\begin{align}
|z_{n+1}-z_m|
\le \sum_{i=m}^{n}|z_{i+1}-z_i| 
&\le \sum_{i=m}^{n}\big[ \rho z_i|\beta_i|+\e^2\gamma^{i-m}\big]
\le 2 \e^2 \sum_{i=m}^{n}\gamma^{i-m}
<\frac{2\e^2}{1-\gamma}
< \frac{\e}{6}
\label{nz}
\end{align}
\gnote{as long as $\e<\frac{1-\gamma}{12}$,}
\nnote{ok}
as well as
\begin{align}
|\beta_{n+1}-\beta_m|
%&=\Big|-\sum_{i=m}^{n}(\beta_{i+1}+\beta_i)(-1)^{n+1-i}-\beta_m ((-1)^{n-m}+1)\Big|
&\le \sum_{i=m}^{n}|\beta_{i+1}+\beta_i| +2\e^2
\le  \sum_{i=m}^{n}\big[2 z_i|\beta_i|+\e^2\gamma^{i-m}\big]+2\e^2\notag\\
&\le 3\e^2 \sum_{i=m}^{n}\gamma^{i-m}+2\e^2
=\frac{5\e^2}{1-\gamma}
< \frac{\e}{3}
\label{nb}
\end{align}
%due to the choice of $\e$. 
%Since $|z_m|< \e^2$ this implies $|\beta_{\tau}|\le \e$. 
%\smallskip
\gnote{as long as $\e<\frac{1-\gamma}{15}$}
\nnote{ok}
and by~\eqref{xinc}
\gnote{as long as $\e<\frac{1-\gamma}{2M}$}
\nnote{ok}
\begin{align}
|x_{n+1}-x_m|
\le \sum_{i=m}^{n}|x_{i+1}-x_i|
%&\le 2 \sum_{i=m}^{n}z_i|\beta_i|+\sum_{i=m}^{n}\phi_1(u_i)\notag\\
%&\le \sum_{i=m}^{n}\big[2 (1-x_i)z_i|\beta_i|+2M\e^2 \gamma^{n-m}\big]\notag\\
&< 2M\e^2\sum_{i=m}^{n}\gamma^{i-m}<\frac{2M\e^2}{1-\gamma}< \e.
\label{nx}
\end{align}
%due to the choice of $\e$. 
%As $|x_m-x|<\e^2$ and $\min\{x,1-x\}=2\delta$ this implies $x_{\tau}\in [\delta,1-\delta]$ as 
%$\e^2<\delta$. Overall, we obtain that $x_{\tau}$, $z_{\tau}$, $\alpha_{\tau}$, $\beta_{\tau}$ satisfy the conditions to belong to 
%$K_{\delta,\e}^{\star}$, and so do $s_{\tau}$, $r_{\tau}$ by the 
%choice of $m$. This however contradicts the definition of $\tau$. 
%\smallskip

Suppose now $\tau<\infty$. It follows from~\eqref{na}--\eqref{nx} used with $n=\tau\wedge\eta-1$ 
that  $u_{\tau\wedge\eta}$ satisfies the conditions to belong to $K_{0,\delta,\e}^{\star}$
\gnote{as long as $\e<\delta$}
\nnote{ok}. 
Hence $\tau\wedge\eta\neq \tau$ and so  $\eta<\tau$. 
\smallskip

(d) Consider the case $\eta<\infty$. 
\smallskip

First, let us show by induction that for all $\eta \le n< \tau$
\begin{align}
\label{etaeta}
z_n|\beta_n|\ge \e^2\gamma^{n-m}.
\end{align}
It is indeed true for $n=\eta$ by the definition of $\eta$. Suppose it is true for some $n$. Using the same arguments as in~\eqref{zzzz}
and~\eqref{bbbb} we have 
\begin{align*}
z_{n+1}\ge z_n(1-4M\e)
\qquad \text{and}\qquad 
|\beta_{n+1}|\ge |\beta_n|(1-6M\e)
\end{align*}
\gnote{As long as $\e<\frac{5-\sqrt{1+24\gamma}}{24M}$}
concluding the induction by 
\begin{align*}
z_{n+1}|\beta_{n+1}|\ge (1-4M\e)(1-6M \e) z_n|\beta_n|\ge \e^2\gamma^{n+1-m}.
\end{align*}

Further, for all $\eta\le n<\tau$, using Lemma~\ref{newas} and \eqref{2phin},  we 
observe that $(\beta_n)$ has an alternating sign since
\gnote{as long as $\e<\frac{1}{6M}$}
\nnote{ok}
\begin{align*}
\frac{\beta_{n+1}}{\beta_n}
%=-1+z_n+\frac{\phi_4(u_n)}{\beta_n}
\le -1+2z_n+3Mz_n|\beta_n|+2Mz_n^2
\le -1+\e(2+5M\e)<0
\end{align*}
and $|\beta_n|$ is decreasing as
\begin{align}
\label{bdecr}
|\beta_{n+1}|
%\le |\beta_n|(1-z_n)+\phi_4(u_n)
\le  |\beta_n|(1-z_n)+3Mz_n\beta_n^2+2M|\beta_n|z_n^2
\le  |\beta_n|\big(1-z_n\big[1-5M\e]\big)\le |\beta_n|,
\end{align}
%In particular, $|\beta_{\tau}|\le |\beta_{\eta}|\le \frac{\e}{7}$ according to~\eqref{nb} and the choice of $m$ and $\e$. 
implying
\begin{align}
\sum_{i=\eta}^{n}z_i|\beta_i|
&=\Big|\sum_{i=\eta}^{n}z_i\beta_i(-1)^i\Big|
=\Big|\sum_{i=\eta}^{n}\big[\beta_{i+1}+\beta_i-\phi_4(u_i)\big](-1)^i\Big|\notag\\
&\le \Big|\sum_{i=\eta}^{n}\Big[\beta_{i+1}+\beta_i\Big](-1)^i\Big|+\sum_{i=\eta}^{n}\big[3Mz_i\beta_i^2+2M|\beta_i|z_i^2\big]\notag\\
&\le |\beta_{n+1}|+|\beta_{\eta}|+5M\e \sum_{i=\eta}^{n}z_i|\beta_i|
\le 2|\beta_{\eta}|+5M\e \sum_{i=\eta}^{n}z_i|\beta_i|. 
\label{partsum}
\end{align}
It follows from~\eqref{nb} with $n=\eta-1$ that $|\beta_{\eta}|<\frac{\e}{6}+\e^2<\frac{\e}{5}$,
\gnote{as long as $\e<\frac{1}{30}$} 
\nnote{ok}
which together with~\eqref{partsum} implies
\gnote{, as long as $\e<\frac{1}{15M}$} 
\nnote{ok}
\begin{align}
\label{serzb}
\sum_{i=\eta}^{n}z_i|\beta_i|<(1-5M\e)^{-1}\frac{2\e}{5}<\frac{3\e}{5}.
\end{align}

Using Lemma~\ref{newas}, \eqref{2phin} and~\eqref{serzb} we have
\gnote{, as long as $\e<\frac{1-\rho}{3M}$,} 
\nnote{ok}
\begin{align}
\label{ztau}
|z_{n+1}-z_{\eta}|
\le \sum_{i=\eta}^{n}|z_{i+1}-z_i| 
%\le  \rho\sum_{i=m}^{\tau\wedge \eta-1}z_i|\beta_i|+\sum_{i=\eta}^{\tau\wedge \eta-1}|\phi_2(u_i)|\notag\\
&\le \sum_{i=\eta}^{n}\Big[ \rho z_i|\beta_i|+3M|\beta_i| z_i^2\Big]
\le (\rho+3M\e)\sum_{i=\eta}^{n}z_i|\beta_i|<\frac{3\e}{5}
\end{align}
%implying $z_{\tau}<z_{\eta}+|z_{\tau}-z_{\eta}|<\frac{\e}{9}+\frac{6\e}{7}<\e$. This also implies 
%$|\alpha_{\tau}|\le z_{\tau}|\beta_{\tau}|<\frac{\e^2}{7}<\e$. 
%\smallskip
and, estimating as in~\eqref{xinc2}, 
\begin{align}
|x_{n+1}-x_{\eta}|
\le \sum_{i=\eta}^{n}|x_{i+1}-x_i|
%&\le 2 \sum_{i=m}^{\tau\wedge \eta-1}z_i|\beta_i|+\sum_{i=m}^{\tau\wedge \eta-1}\phi_1(u_i)\notag\\
%&\le \sum_{i=\eta}^{n}\Big[2 (1-x_i)z_i|\beta_i|+2Mz_i|\beta_i|\Big]
\le  3M\sum_{i=\eta}^{n}z_i|\beta_i|<3M\e. 
\label{xtau}
\end{align}

We will now combine all those estimates, using them with 
$n=\eta-1$ and $n=\tau-1$, to show that $\tau=\infty$. Suppose $\tau<\infty$. 
By~\eqref{nx} we have $x_{\eta}\in [2\delta-\e, 1-2\delta+\e]$ and hence by~\eqref{xtau} we have $x_{\tau}\in [\delta, 1-\delta]$
\gnote{as long as $\e<\frac{\delta}{1+3M}$}
\nnote{ok}
. 
By~\eqref{bdecr} we have $|\beta_{\tau}|\le |\beta_{\eta}|<\frac{\e}{5}<\e$.  
By~\eqref{nz} and~\eqref{ztau} we have\gnote{as long as $\e<\frac{\delta}{1+3M}$} 
\nnote{ok}
\begin{align*}
z_{\tau}\le |z_{\tau}-z_{\eta}|+|z_{\eta}-z_{m}|+z_{m}<\frac{3\e}{5}+\frac{\e}{6}+\e^2<\e. 
\end{align*}
By part (a) with $n=\tau$, we have $|\alpha_{\tau}|\le z_{\tau}|\beta_{\tau}|<\e^2<\e$. This implies that $u_{\tau}\in K_{0,\delta,\e}^{\star}$ leading to a contradiction. 
\smallskip

Finally, it follows from $\tau=\infty$ and~\eqref{serzb} that the series in~\eqref{serser} converges.
\smallskip

(e) Consider the case $\eta=\infty$. 
\smallskip

Observe that $\tau=\infty$ by (c) and the series in~\eqref{serser} converges as it is dominated by a geometric series. 
\end{proof}

Now we can combine the previous two lemmas with Lemma~\ref{limit_points}, which ensures that $(\Theta_n, \pi_n)$
does indeed get close to an edge equilibrium or an edge equilibrium two-cycle. 

\begin{prop} 
\label{p_lpos}
Almost surely on the event
$\{\chi^{\star}=0, \ell=0\}\cap \mathcal{D}^c$ the sequence $(\Theta_n)$ converges 
to some $p\in \partial\Sigma\setminus V$ and the sequence $(\pi_n)$ converges
to $q_{[p]}$.
\smallskip

Almost surely on the event $\{\chi^{\star}=0,\ell>0\}\cap \mathcal{D}^c$ the sequence $(\Theta_n)$ converges to some $p\in \partial\Sigma\setminus V$, and the sequence $(\pi_n)$ approaches the two-cycle 
$\{q_{[p]}\pm \ell e_{-1}(p)\}$.  
\end{prop}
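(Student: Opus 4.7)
My plan is to combine Lemma~\ref{limit_points}, which identifies the possible accumulation points of $(\Theta_n,\pi_n)$, with the quantitative capture results of Lemmas~\ref{pall} and~\ref{double}, which show that once the orbit enters a small neighbourhood of an edge equilibrium (or edge two-cycle) it remains trapped there with controlled decay of the relevant parameters.

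By Lemma~\ref{limit_points}, along some subsequence $(n_k)$ the pair $(\Theta_{n_k},\pi_{n_k})$ tends to $(p,q_{[p]}\pm\ell e_{-1}(p))$ for some $p\in\partial\Sigma\setminus V$. By the rotational symmetry of the model I can relabel so that $p$ sits on the edge $z=0$, so the coordinates $(\alpha_n,\beta_n)$ of $v_n$ in the eigenbasis at $\Theta_n$ are well defined for indices near the subsequence, and $\alpha_{n_k}\to 0$, $\beta_{n_k}\to\pm\ell$ along $(n_k)$. I then fix $\delta>0$ so that the first coordinate of $p$ lies in $[3\delta,1-3\delta]$ and choose $\e,\eta,\gamma,\nu$ so that every smallness hypothesis in Lemmas~\ref{pall} and~\ref{double} is satisfied. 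By Proposition~\ref{p_error}, for $k$ sufficiently large the index $m:=n_k$ meets~\eqref{inside0} (with $\theta:=\ell$) and~\eqref{inside_error}.

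In the case $\ell>0$, Lemma~\ref{pall} yields $\tau=\infty$ together with~\eqref{pz}--\eqref{px}. The bound~\eqref{px} makes $(x_n)$ Cauchy with limit $x_{\star}\in[\delta,1-\delta]$, and $z_n\to 0$ by~\eqref{pz}, so $\Theta_n\to p_{\star}:=(x_{\star},1-x_{\star},0)\in\partial\Sigma\setminus V$. Decomposing $\pi_n=q_{[\Theta_n]}+\alpha_n e_0(\Theta_n)+\beta_n e_{-1}(\Theta_n)$, the first term converges to $q_{[p_{\star}]}$ and $\alpha_n\to 0$ by~\eqref{pa}. Lemma~\ref{newas}(a) provides $\beta_{n+1}+\beta_n=O(z_n,s_n,r_n)$, whose right-hand side is summable by Proposition~\ref{p_error} and~\eqref{pz}, so $b_n:=(-1)^{n-m}\beta_n$ is Cauchy and converges. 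Since $\|v_n\|\to\ell$ by Lemma~\ref{lemma_l}, $\alpha_n\to 0$ and $\|e_{-1}(\Theta_n)\|=1$, the reverse triangle inequality gives $|\beta_n|\to\ell$, hence $b_n\to\pm\ell$, producing the required two-cycle behaviour for $(\pi_n)$. In the case $\ell=0$, Lemma~\ref{double} gives $\tau=\infty$, the summability~\eqref{serser}, and~\eqref{0xx} together with~\eqref{maxi}. Since $\sum z_n|\beta_n|<\infty$ and $\sum\e^2\gamma^{n-m}<\infty$, estimate~\eqref{0xx} yields $\sum|x_{n+1}-x_n|<\infty$, so $x_n\to x_{\star}\in[\delta,1-\delta]$. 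Applying Lemma~\ref{newas}(b) to $|z_{n+1}-z_n|$ and exploiting~\eqref{serser}, the summability of $(\alpha_n)$ inherited from~\eqref{maxi}, and $\sum\|s_n\|<\infty$, gives $\sum|z_{n+1}-z_n|<\infty$; combined with $z_{n_k}\to 0$ this forces $z_n\to 0$, so $\Theta_n\to p_{\star}=(x_{\star},1-x_{\star},0)$. Finally Lemma~\ref{lemma_l} gives $\|\pi_n-q_{[\Theta_n]}\|\to 0$, so $\pi_n\to q_{[p_{\star}]}$.

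The main obstacle is the selection of $m$ that simultaneously meets all capture hypotheses: it has to be an index along the convergent subsequence of Lemma~\ref{limit_points} (so that $(\Theta_m,\pi_m)$ sits deep inside $K^{\star}_{\theta,\delta,\e}$), yet large enough for the Proposition~\ref{p_error} bound on $\|r_n\|+\|s_n\|$ to enforce~\eqref{inside_error}; both requirements must be arranged with the same $\e$ satisfying all the constants that went into the capture lemmas. A secondary technical point in the $\ell>0$ case is upgrading the qualitative estimate $|\beta_n\pm\theta(-1)^{n-m}|<\e$ from Lemma~\ref{pall} into genuine convergence of the alternating subsequences, which is where the alternation recurrence from Lemma~\ref{newas}(a) must be married to the amplitude information supplied by Lemma~\ref{lemma_l}.
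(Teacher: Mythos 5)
Your proposal is correct and follows essentially the same route as the paper: Lemma~\ref{limit_points} supplies the edge limit point, Proposition~\ref{p_error} lets you pick $m$ along that subsequence satisfying~\eqref{inside0}--\eqref{inside_error}, and then Lemma~\ref{pall} (for $\ell>0$) or Lemma~\ref{double} (for $\ell=0$) traps the orbit and yields convergence of $(x_n)$, $z_n\to 0$, $\alpha_n\to 0$, with Lemma~\ref{lemma_l} identifying the limiting behaviour of $\beta_n$. Your two small elaborations (getting $z_n\to 0$ in the $\ell=0$ case from summable increments plus the subsequence, and upgrading the alternation of $\beta_n$ via summability of $\beta_{n+1}+\beta_n$ rather than directly combining~\eqref{pb} with $|\beta_n|\to\ell$ and $\chi^{\star}=0$ as the paper does) are valid but cosmetic variations.
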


\begin{proof} 
Consider the event $\{\chi^{\star}=0\}\cap \mathcal{D}^c$ and let $\theta=\ell$. By Lemma~\ref{limit_points} $(\Theta_n, \pi_n)$ has a limit point of the form $(p, q_{[p]}\pm \theta e_{-1}(p))$, where 
$p\in\partial\Sigma\setminus V$. Without loss of generality we may assume that $p=(x,1-x,0)$, $x\in (0,1)$. Let $\delta$
be sufficiently small so that $x\in (2\delta, 1-2\delta)$, and let the parameters $\nu, \gamma,\eta,\e$ be chosen accordingly as above. Then, and due to Proposition~\ref{p_error}, 
$m$ satisfying~\eqref{inside0} and~\eqref{inside_error} exists. Now, using either Lemma~\ref{pall} or Lemma~\ref{double} depending on the value of $\theta$, 
we obtain that $\alpha_n\to 0$ and that the series of increments of $(x_n)$
converges implying convergence of $(x_n)$ to $x$ since $x$ was already a limit point. Further, $z_n\to 0$ due to $\chi^{\star}=0$ and $\tau=\infty$. 
\smallskip

On the event $\{\chi^{\star}=0, \ell=0\}\cap \mathcal{D}^c$ we have $\beta_n\to 0$ as $\Vert v_n\Vert \to 0$ by Lemma \ref{lemma_l}, implying $\pi_n\to q_{[p]}$. 
\smallskip

On the event $\{\chi^{\star}=0, \ell>0\}\cap \mathcal{D}^c$ we obtain that 
$(\beta_n)$ approaches the two-cycle $\{\pm\ell\}$ since $\Vert v_n\Vert \to \ell$ by Lemma \ref{lemma_l} and since the sign of $(\beta_n)$
alternates by~\eqref{pb}.  
\end{proof}
\bigskip

%%%%%%%%%%%%%%%%%%%%%%%%%%%%%%%%%%%%%%%%
%%%%%%%%%%%%%%%%%%%%%%%%%%%%%%%%%%%%%%%%
%%%%%%%%%%%%%%%%%%%%%%%%%%%%%%%%%%%%%%%%

\section{Convergence and positive probability scenarios}
\label{s_main}

We begin this section by proving Theorem~\ref{th_conv}, which will be just a quick compilation of the previous results. 

\begin{proof}[Proof of Theorem~\ref{th_conv}] We can split the probability space into the following events:
\smallskip

(a) On the event $\mathcal{D}$ we have $\Theta_n\to v\in V$. It follows from~\eqref{iter_lim_th} that $\pi_n+\pi_{n+1}\to \one -v$, 
implying $v_n+v_{n+1}\to 0$. Since $q_{[v]}\in \partial \Sigma$ and $\Vert v_n\Vert \to \ell$ we obtain either scenario (4) of the theorem in $\ell=0$
or scenario (5) if $\ell>0$. 
\smallskip

(b) On the event $\{\chi_{\star}>0\}$ scenario (1) occurs by Proposition~\ref{p_eq}. 
\smallskip

(c) The event $\{\chi_{\star}=0, \chi^{\star}>0\}\cap \mathcal{D}^c$ has probability zero by Propositions~\ref{p_dpos_lpos}
 and~\ref{p_ddd}. 
\smallskip

(d) On the event $\{\chi^{\star}=0, \ell=0\}\cap \mathcal{D}^c$ scenario (2) occurs by Proposition~\ref{p_lpos}. 
\smallskip

(e) On the event $\{\chi^{\star}=0, \ell>0\}\cap \mathcal{D}^c$ scenario (3) occurs by Proposition~\ref{p_lpos}.
\end{proof}
\smallskip

Now we turn our attention to the positive probability scenarios. First, we show that convergence to an internal equilibrium 
is one such case. This is done by artificially forcing the particles 
to choose between edges as equally as possible (largely disregarding the reinforcement probabilities) so that both $\Theta_n$ and $\pi_n$
stay close to the center of the simplex, and then showing that the sequence will not be able to change its behaviour as much as 
to be able to get to the boundary after that.

\begin{prop} 
\label{p_pos1}
$\P(\Theta\in \Sigma\setminus\partial\Sigma, \ell=0)>0$. 
\end{prop}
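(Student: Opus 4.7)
The plan is to build, for some deterministic time $m$, a positive-probability event $A$ on which $(\Theta_m,\pi_m)$ sits deep inside the basin of attraction of the symmetric interior equilibrium $(\Theta^\star,\Theta^\star)$ with $\Theta^\star=(\tfrac13,\tfrac13,\tfrac13)$, after which the contraction from Lemma~\ref{l_norm} combined with the perturbation estimates of Proposition~\ref{p_error} will force $\chi_\star>0$, and the conclusion will then follow from Proposition~\ref{p_eq}.

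For the forcing, since $\mu>1$ and $\P(Z=0)=0$, I would pick an integer $k\ge 2$ with $\P(Z=k)>0$ and $\hat\nu\in(1,\mu^{\frac12})$ with $\hat\nu^2<k$. Then I would define $A$ as the intersection, for $n=0,\dots,m-1$, of the events that every particle alive at time $n$ produces exactly $k$ offspring and, at each vertex $i$, the binomial $B_{n+1}^{\ssup i}$ takes the most balanced value $\lfloor kN_n^{\ssup i}/2\rfloor$. Each conditional event is nondegenerate because $\P(Z=k)>0$ and the parameters $P_n^{\ssup i}$ stay strictly in $(0,1)$ (as $T_n^{\ssup i}\ge T_0^{\ssup i}>0$ throughout), so $\P(A)>0$. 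On $A$ one has $\|N_n\|=k^n\|N_0\|\ge\hat\nu^{2n}$ and the explicit recursion $\pi_{n+1}=q_{[\pi_n]}+O(k^{-n})$, whence $\pi_{n+2}-\Theta^\star=\tfrac14(\pi_n-\Theta^\star)+O(k^{-n})$ and $\pi_n\to\Theta^\star$ geometrically; since $T_{n+1}=T_n+N_{n+1}$ componentwise on $A$, a summation by parts then yields $\Theta_n\to\Theta^\star$ as well. So for any prescribed $\eps>0$ one can pick $m$ so that on $A$ both $\|\Theta_m-\Theta^\star\|$ and $\|\pi_m-\Theta^\star\|$ are at most $\eps$.

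Next, given $\delta>0$ small and $\eps$ smaller still (chosen so that $\chi(p)\ge 3\delta$ whenever $\|p-\Theta^\star\|\le\eps$), I would intersect $A$ with the almost-sure event $B$ from Proposition~\ref{p_error} on which $\|U_n\|+\|W_n\|\le\nu^{-n}$ for all $n\ge m$; then $\P(A\cap B)=\P(A)>0$. The bootstrap is modeled on the proof of Proposition~\ref{p_ddd}: letting $\tau=\inf\{n>m:\chi(\Theta_n)<\delta\}$, for $m\le n<\tau$ Lemma~\ref{l_norm} gives
\[
\vertiiibig{\Big(1-\tfrac{\rho}{2}\Big)M_{\Theta_n}-\tfrac{\rho}{2}I}\le 1-\delta\Big(1-\tfrac{\rho}{2}\Big)<1,
\]
so Lemma~\ref{lemma_notgeom} applied to \eqref{iter_v} forces $\|v_n\|$ to decay geometrically in $n-m$, and summing \eqref{iter_incr} gives $\|\Theta_\tau-\Theta_m\|<\delta$ once $\eps$ is chosen small enough. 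This contradicts $\chi(\Theta_\tau)<\delta$ given $\chi(\Theta_m)\ge 3\delta$, so $\tau=\infty$, $\chi_\star\ge\delta$ on $A\cap B$, and Proposition~\ref{p_eq} then yields $\Theta\in\Sigma\setminus\partial\Sigma$ and $\ell=0$.

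The main obstacle is in the first step: showing rigorously that conditioning on this specific low-probability trajectory really does drive $(\Theta_n,\pi_n)$ arbitrarily close to the symmetric equilibrium. The key computations are the explicit self-contraction $\pi_{n+2}-\Theta^\star=\tfrac14(\pi_n-\Theta^\star)$ of the balanced-split map and the fact that the cumulative weights $T_n=T_0+\sum_{j\le n}N_j$ inherit the same symmetry because the growing geometric sizes $\|N_j\|=k^j\|N_0\|$ give dominant weight to the latest (hence most symmetric) generations. Rounding errors from $\lfloor kN_n^{\ssup i}/2\rfloor$ contribute at most $O(1/\|N_n\|)=O(k^{-n})$ and are absorbed into the geometric error.
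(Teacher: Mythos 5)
Your overall strategy (force a balanced behaviour up to a deterministic time $m$ so that $(\Theta_m,\pi_m)$ is $\e$-close to the barycentre, then bootstrap with Lemma~\ref{l_norm}, \eqref{iter_v}, \eqref{iter_incr} and Lemma~\ref{lemma_notgeom} to keep $\chi(\Theta_n)\ge\delta$ forever and invoke Proposition~\ref{p_eq}) is the same as the paper's, and your Step~3 is essentially the paper's Step~3. However, there is a genuine gap at the pivot of the argument: the claim that $B=\{\Vert U_n\Vert+\Vert W_n\Vert\le\nu^{-n}\ \forall n\ge m\}$ is an almost-sure event, so that $\P(A\cap B)=\P(A)>0$. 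Proposition~\ref{p_error} only gives $\Vert U_{n+1}\Vert,\Vert W_{n+1}\Vert=O(\nu^{-n})$ almost surely, i.e.\ with a random constant (equivalently a random threshold), so for the fixed $m$ you need (which depends on $\e$) the event $B$ has probability strictly less than one, while $\P(A)$ is astronomically small in $m$ (you force exact offspring and exact splits for exponentially many particles over $m$ generations). No unconditional argument such as $\P(A\cap B)\ge \P(A)-\P(B^c)$ can work; what is needed is a \emph{conditional} statement that, given $\mathcal{F}_m$ on the forced event, future typicality has probability bounded below (in fact close to one) uniformly — this is exactly what the paper's uniform-in-$\Omega$ conditional Lemmas~\ref{l_n11}, \ref{l_xi}, \ref{l_ga}, \ref{l_rho8} provide and what Step~1 of the paper's proof ($\1_{\mathcal{E}\cap\mathcal{Z}}\P_{\mathcal{F}_m}(\mathcal{C}^c)\le\gamma<1$) is for. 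Your proposal skips this entirely.

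A second, related problem is created by forcing every particle to have exactly $k$ offspring. Since $\mu$ need not be an integer (and need not be close to any point of the support of $Z$), on your event $A$ one has $\rho_m\approx 1-\tfrac1k\neq\rho$, whereas the perturbations $S_{n+1}$ and hence $W_{n+1},U_{n+1}$ contain the factor $\rho_{n+1}-\rho$. Immediately after time $m$ this factor is of constant order and only relaxes geometrically, so the bound $\Vert U_n\Vert+\Vert W_n\Vert\le\nu^{-n}$ for all $n\ge m$ is simply false on $A$ until a further delay of order $m$ has elapsed (this is the role of $\varpi$ in Lemma~\ref{l_rho8}); even repairing point one via conditional estimates would not rescue the bootstrap as written. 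The paper avoids this by forcing only the movement variables $B_{n+1}^{\ssup i}$ (an event of positive conditional probability at every step regardless of the $P_n^{\ssup i}$) while leaving the branching random and merely ``typical'', so that $\rho_n$ is already within $\nu^{-n}$ of $\rho$ on the relevant window, with the delay absorbed by taking $m=4\varpi(\hat m)$ and requiring typicality from $m/4$ onward. (A minor further slip: on $A$ the componentwise identity $T_{n+1}=T_n+N_{n+1}$ is not exact — edge-$i$ traversals are $B_{n+1}^{\ssup{i\oplus1}}+\bar B_{n+1}^{\ssup{i\ominus1}}$, not $N_{n+1}^{\ssup i}$ — though on the balanced event the two differ only by $O(1)$ per step.)
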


\begin{proof} 
Let $\sigma=(\frac 1 3, \frac 1 3, \frac 1 3)$. It follows from Lemma~\ref{l_norm} that $\vertiii{M_{\sigma}}=\frac 1 2$. Let $\delta>0$ be such that 
$\vertiii{M_{p}} \le \frac 2 3$ and consequently 
\begin{align}
\label{xx5}
\vertiii{\Big(1-\frac\rho 2\Big)M_p -\frac\rho 2 I}\le  \frac{4+\rho}{6}\le \frac 5 6
\end{align}
for all $p$ satisfying $\Vert p-\sigma\Vert <\delta$. 
\smallskip

Let $1<\nu<\hat\nu<\mu^{\frac 1 2}$ and let $\varpi$ be as in Lemma~\ref{l_rho8}. Let $\gamma\in (0,1)$ be sufficiently close to one
and $\e>0$ be sufficiently small.
Finally, suppose that $\hat m$ is sufficiently large and let $m=4\varpi(\hat m)$. 
\smallskip

Let 
\begin{align*}
\mathcal{E}=\Big\{\Big|B_{n+1}^{\ssup i}-\frac{1}{2}\sum_{k=1}^{N_n^{\ssup i}}Z^{\ssup i}_{n+1,k}\Big|\le \frac 1 2\text{ for all }i \text{ and all }n<m\Big\} 
\end{align*}
be the event that up to the time $m$  
roughly a half of the population of each vertex crosses each of the two adjacent edges, and let  
\begin{align*}
\mathcal{Z}=\Big\{ 
&\Big|\frac{\mu\Vert N_n\Vert}{\Vert N_{n+1}\Vert}-1\Big|\le \nu^{-n},
\Vert \Gamma_{n+1}\Vert\le \nu^{-n},
%\text{ and }
%\text{ for all }\hat m\le n<m,\\
|\rho^{-1}_{n+1}-\rho^{-1}|\le \nu^{-n},
\Vert N_{n+1}\Vert \ge \hat\nu^{2(n+1)},\text{ for all }\frac m 4 \le n<m\Big\}
\end{align*}
be the event of typical branching between the times $\frac m 4 = \varpi(\hat m)$
and 
$m=4\varpi(\hat m)$. We will see that 
$(\Theta_{m}, \pi_{m})$ is close to $(\sigma,\sigma)$ on the event $\mathcal{E}\cap \mathcal{Z}$. Further, let 
\begin{align*}
\mathcal{C}&=\Big\{
\Big|\frac{\mu\Vert N_n\Vert}{\Vert N_{n+1}\Vert}-1\Big|\le \nu^{-n}, 
\Vert \Gamma_{n+1}\Vert\le \nu^{-n}, 
%\text{ and }
|\rho_{n+1}^{-1}-\rho^{-1}|\le \nu^{-n}, \Vert \Xi_{n+1}\Vert\le \nu^{-n}\text{ for all }n\ge m\Big\}
\end{align*}
be the event of typical branching and movement after time $m$. We will see that on the event 
\begin{align*}
\mathcal{A}=\mathcal{E}\cap \mathcal{Z}\cap \mathcal{C}
\end{align*}
the process is unable to reach the boundary since it  is too far from it at time $m$
and converges exponentially while away from the boundary. 
\smallskip

{\it Step 1.} Let us show that 
\begin{align*}
\P(\mathcal{A})>0.
\end{align*}

Since $\Vert N_{m}\Vert\ge \hat\nu^{2m}$ 
and $|\rho_m^{-1}-\rho^{-1}|\le \nu^{-(m-1)}$ on $\mathcal{Z}$, 
it follows from Lemmas~\ref{l_n11}, ~\ref{l_xi},~\ref{l_ga} and~\ref{l_rho8}  that
there is $\gamma<1$ such that 
\begin{align*}
 \1_{\mathcal{E}\cap \mathcal{Z}}\P_{\mathcal{F}_{m}}( \mathcal{C}^c)\le \gamma.
\end{align*}
This yields 
%a deterministic sequence $\gamma_m\to 0$ as  such that  
\begin{align*}
\P( \mathcal{E}\cap \mathcal{Z}\cap \mathcal{C}^c )
&=\E\Big[ \1_{\mathcal{E}\cap \mathcal{Z}}\P_{\mathcal{F}_{m}}( \mathcal{C}^c)\Big]
\le \gamma \P( \mathcal{E}\cap \mathcal{Z}).
%\\
%\P( \mathcal{E}_m\cap \hat{\mathcal{Z}}_m\cap \mathcal{Z}_m^c )
%&=\E\Big[ \1_{\mathcal{E}_m\cap \hat{\mathcal{Z}}_m}\P_{\mathcal{F}_m}( \mathcal{Z}_m^c)\Big]
%\le \gamma_m \P( \mathcal{E}_m\cap \hat{\mathcal{Z}}_m).
\end{align*}
and therefore  
\begin{align*}
\P(\mathcal{A})
\ge (1-\gamma)\P(\mathcal{E} | \mathcal{Z})\P(\mathcal{Z}).
\end{align*}
We have $\P(\mathcal{Z})>0$ by Lemmas~\ref{l_n8},~\ref{l_n11},~\ref{l_ga}
and~\ref{l_rho8}, and 
$\P(\mathcal{E} |\mathcal{Z})>0$ 
is straightforward. 
%since the event $\mathcal{Z}$ is 
%defined by branching only and not by movement of particles.

%In the rest of the proof we will assume that $m$ is large enough and show that 
%$\Theta\in \Sigma\setminus \partial \Sigma$ and $\ell=0$ on $\mathcal{A}_m$. 
\smallskip

{\it Step 2.} Let us show that on $\mathcal{E}\cap\mathcal{Z}$
\begin{align}
\max\big\{\Vert \Theta_{m}-\sigma\Vert, \Vert \pi_{m}-\sigma\Vert\big\}<\e.
\label{max7}
\end{align}

On the event $\mathcal{E}$ for all $n<m$ the iterations~\eqref{def_t} and~\eqref{def_n} can be rewritten as
\begin{align*}
T^{\ssup i}_{n+1}&=T_n^{\ssup i}+
\frac 1 2 \sum_{k=1}^{N_n^{\ssup{i\oplus 1}}}Z^{\ssup{i\oplus 1}}_{n+1,k}
+\frac 1 2\sum_{k=1}^{N_n^{\ssup{i\ominus 1}}}Z^{\ssup{i\ominus 1}}_{n+1,k}
+\e_{n+1}^{\ssup i},\\
N^{\ssup i}_{n+1}&=
\frac 1 2 \sum_{k=1}^{N_n^{\ssup{i\ominus 1}}}Z^{\ssup{i\ominus 1}}_{n+1,k}
+\frac 1 2 \sum_{k=1}^{N_n^{\ssup{i\oplus 1}}}Z^{\ssup{i\oplus 1}}_{n+1,k}
+\delta_{n+1}^{\ssup i},
\end{align*}
where $|\e_{n+1}^{\ssup i}|\le 1$ and $|\delta_{n+1}^{\ssup i}|\le 1$ account for splitting 
a possibly odd number of particles into two almost equal halves. 
Normalising similarly to~\eqref{iter_th2} and~\eqref{iter_pi2} 
%and using $\frac{\Vert T_n\Vert }{\Vert T_{n+1}\Vert }=1-\rho_{n+1}$, 
we obtain
\begin{align*}
\Theta^{\ssup i}_{n+1}&=(1-\rho_{n+1})\Theta_n^{\ssup i}+
\rho_{n+1}\Big[\frac 1 2\pi_n^{\ssup{i\oplus 1}}
+\frac 1 2\pi_n^{\ssup{i\ominus 1}}
+\frac 1 2\Upsilon_{n+1}^{\ssup{i\oplus 1}}
+\frac 1 2\Upsilon_{n+1}^{\ssup{i\ominus 1}}
\Big]+\frac{\e_{n+1}^{\ssup i}}{\Vert T_{n+1}\Vert},\\
\pi^{\ssup i}_{n+1}&=
\frac 1 2 \pi_n^{\ssup{i\oplus 1}}
+\frac 1 2\pi_n^{\ssup{i\ominus 1}}
+\frac 1 2\Upsilon_{n+1}^{\ssup{i\oplus 1}}
+\frac 1 2\Upsilon_{n+1}^{\ssup{i\ominus 1}}+\frac{\delta_{n+1}^{\ssup i}}{\Vert N_{n+1}\Vert},\end{align*}
which is equivalent to 
\begin{align}
\label{s6}
\Theta_{n+1}=(1-\rho)\Theta_n+\rho M_{\sigma}\pi_n+\hat R_{n+1}
\qquad\text{and}\qquad
\pi_{n+1}=M_{\sigma}\pi_n+\hat S_{n+1},
\end{align}
where 
\begin{align*}
\hat R_{n+1}=(\rho-\rho_{n+1})[\Theta_n-M_{\sigma}\pi_n]
+\rho_{n+1}M_{\sigma}\Upsilon_{n+1}
+\frac{\e_{n+1}}{\Vert T_{n+1}\Vert}
\qquad\text{and}\qquad
\hat S_{n+1}=-\frac 1 2\Upsilon_{n+1}
+\frac{\delta_{n+1}}{\Vert N_{n+1}\Vert}.
\end{align*}
Since $M_{\sigma}\sigma=\sigma$ and $M_{\sigma}v=-\frac 1 2 v$ for any $v\in\mathcal{L}$
we can rewrite~\eqref{s6} with respect to the centre $\sigma$ as  
\begin{align}
\label{h6}
\Theta_{n+1}-\sigma&=(1-\rho)(\Theta_n-\sigma)-\frac{\rho}{2}(\pi_{n}-\sigma)+\hat R_{n+1},\\
\pi_{n+1}-\sigma&=-\frac 1 2(\pi_n-\sigma)+\hat S_{n+1}.
\label{h7}
\end{align}
On the event $\mathcal{Z}$ the error terms are small 
for all $\frac 1 4 m\le n<m$. Indeed, using~\eqref{upga}, $\Vert M_{\sigma}\Vert=1$ and 
\begin{align}
\label{rho_inv}
|\rho_{n+1}-\rho|<|\rho_{n+1}^{-1}-\rho^{-1}| 
\end{align}
we obtain
\begin{align*}
\Vert \hat R_{n+1}\Vert 
&\le 2|\rho_{n+1}^{-1}-\rho^{-1}|
+\Vert \Gamma_{n+1}\Vert 
+\Big|\frac{\mu\Vert N_n\Vert}{\Vert N_{n+1}\Vert}-1\Big|
+ \frac{3}{\Vert N_{n+1}\Vert}
\le 4\nu^{-n}+3\hat\nu^{-2(n+1)}<5\nu^{-n},\\
\Vert \hat S_{n+1}\Vert 
&
%\le \Vert \Upsilon_{n+1}\Vert + \frac{3}{\Vert N_{n+1}\Vert}
\le \frac 1 2 \Vert \Gamma_{n+1}\Vert +\frac 1 2 \Big|\frac{\mu\Vert N_n\Vert}{\Vert N_{n+1}\Vert}-1\Big|+ \frac{3}{\Vert N_{n+1}\Vert}
\le \nu^{-n}+3\hat \nu^{-2(n+1)}\le 2\nu^{-n}.
\end{align*}
On the event $\mathcal{E}\cap \mathcal{Z}$, we combine this with 
~\eqref{h6} and~\eqref{h7} and obtain, for all  $\frac 1 4 m\le n<m$, 
\begin{align*}
\Vert \Theta_{n+1}-\sigma\Vert
&\le (1-\rho)\Vert \Theta_{n}-\sigma\Vert+\frac\rho 2 \Vert \pi_n-\sigma\Vert+5\nu^{-n},\\
\Vert \pi_{n+1}-\sigma\Vert
&\le \frac 1 2 \Vert \pi_{n}-\sigma\Vert + 2\nu^{-n}, 
\end{align*}
which in turn implies
\begin{align*}
\max\big\{\Vert \Theta_{n+1}-\sigma\Vert, \Vert \pi_{n+1}-\sigma\Vert\big\}
\le \Big(1-\frac \rho 2\Big)
\max\big\{\Vert \Theta_{n}-\sigma\Vert, \Vert \pi_{n}-\sigma\Vert\big\}+5 \nu^{-n}
\end{align*}
by using the two-dimensional matrix infinity-norm and the observation that $1-\frac \rho 2\ge \frac 1 2$. Finally, it follows from Lemma~\ref{lemma_notgeom} that
\begin{align*}
\max\big\{\Vert \Theta_{m}-\sigma\Vert, \Vert \pi_{m}-\sigma\Vert\big\}
&\le \gamma^{\frac{3m}{4}}
\Big[\max\big\{\Vert \Theta_{ \frac m 4 }-\sigma\Vert, \Vert \pi_{\frac m 4}-\sigma\Vert\big\}
+ \nu^{- \frac m 4}\frac{5\nu}{\gamma\nu-1}\Big]%\notag\\
\le \gamma^{\frac{3m}{4}}\Big[2+\frac{5\nu}{\gamma\nu-1}\Big]<\e.
\end{align*}

{\it Step 3.}
Let us show that 
\begin{align*}
k=\inf\{n>m: \Vert \Theta_n-\sigma\Vert \ge \delta\}. 
\end{align*}
is infinite on $\mathcal{A}$. 
\smallskip

On the event $\mathcal{E}\cap \mathcal{Z}$ by~\eqref{max7}  we have 
\begin{align}
\label{xx6}
\Vert v_m\Vert \le \Vert \pi_m-\sigma\Vert +\Vert q_{[\Theta_m]}-\sigma\Vert 
<2 \e. 
\end{align}

On the event $\mathcal{C}$ the error terms are small for all $n\ge m$. Indeed, 
using~\eqref{upga} and~\eqref{rho_inv} we have 
\begin{align}
\max\big\{\Vert U_{n+1}\Vert , \Vert W_{n+1}\Vert\big\}
&\le \Vert R_{n+1}\Vert +  \Vert S_{n+1}\Vert 
\le 4 \Vert \Upsilon_{n+1}\Vert +  \Vert \Xi_{n+1}\Vert + 6|\rho_{n+1}-\rho|\notag\\
&\le 4 \Vert \Gamma_{n+1}\Vert +4 \Big|\frac{\mu\Vert N_n\Vert}{\Vert N_{n+1}\Vert}-1\Big|
+  \Vert \Xi_{n+1}\Vert + 6|\rho_{n+1}^{-1}-\rho^{-1}|
\le 15\nu^{-n}.
\label{u9}
\end{align}

On the event $\mathcal{A}$ we obtain for all 
$m\le n<k$ using~\eqref{iter_v},~\eqref{xx5}, ~\eqref{max7} and~\eqref{u9} that 
\begin{align*}
\Vert v_{n+1}\Vert \le \frac 5 6 \Vert v_n\Vert +15\nu^{-n}.
\end{align*}
Similarly to~\eqref{xx4}, we have by Lemma~\ref{lemma_notgeom},~\eqref{xx6} and~\eqref{u9}
that  for any $m\le n<k$
\begin{align}
\Vert \Theta_{n+1}-\Theta_m\Vert 
&\le \frac{2}{1-\gamma}\Big[\Vert v_m\Vert +\nu^{-m}\frac{15\nu}{\gamma \nu-1}\Big] +\frac{15\nu}{\nu-1}\nu^{-m}
%&\le \frac{4\e}{1-\gamma}+\frac{3\e}{(1-\gamma)(\gamma\nu-1)}
%\le \frac{4\gamma\nu\e}{(1-\gamma)(\gamma\nu-1)}
<\frac{\delta}{2}.
\end{align}
Together with~\eqref{max7} this gives $\Vert \Theta_{n+1}-\sigma\Vert <\delta$ implying $k=\infty$. 
This yields $\Theta\in \Sigma\setminus\partial\Sigma$ and hence also $\ell=0$ by Theorem~\ref{th_conv}. 
\end{proof}

Now we prove that convergence to an edge equilibrium two-cycle occurs with positive probability. Again, we force the particles 
to adopt a desired behaviour for a while (first staying close to the center of the symplex while the population grows and then not traversing the third edge at all while crossing the other two edges as equally as possible)
and show that the system never recovers from that.

\begin{prop} 
\label{p_pos2}
$\P(\Theta\in \partial\Sigma\setminus V, \ell>0)>0$. 
\end{prop}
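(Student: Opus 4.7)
The strategy is parallel to Proposition~\ref{p_pos1}: I construct an event of positive probability on which, by some finite time $m$, the pair $(\Theta_m, \pi_m)$ lands inside $K^{\star}_{\theta^{\star}, \delta, \e}$ for some $\theta^{\star} > 0$, and then apply Lemma~\ref{pall} to conclude that the orbit stays close to an edge equilibrium two-cycle of amplitude $\theta^{\star}$ forever. Fix the target point $p^{\star} = (\tfrac{1}{2}, \tfrac{1}{2}, 0) \in E_3 \setminus V$ and amplitude $\theta^{\star} = \tfrac{1}{3}$, choose $\delta \in (0, \tfrac{1}{4})$ with $\tfrac{1}{2} \in (2\delta, 1 - 2\delta)$, and let $\nu, \gamma, \eta, \e$ be as in Section~\ref{s_edge}. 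The event is built from three layers, indexed by parameters $\hat m < m$ to be fixed afterwards. \emph{Layer~1}, on times $0, \ldots, \hat m$, is the balanced-branching event $\mathcal{E} \cap \mathcal{Z}$ from the proof of Proposition~\ref{p_pos1}, delivering $\Vert \Theta_{\hat m} - \sigma \Vert \vee \Vert \pi_{\hat m} - \sigma \Vert < \e$ and $\Vert N_{\hat m} \Vert$ of order $\mu^{\hat m}$. \emph{Layer~2}, on times $\hat m, \ldots, m$, forces every particle at vertex $1$ or vertex $2$ to move to vertex $3$ (so edge~$3$ is not traversed during this phase), while splitting the particles at vertex $3$ as evenly as possible between vertices $1$ and $2$. \emph{Layer~3}, on times $n \ge m$, is the typical-branching event $\mathcal{C}$ from Proposition~\ref{p_pos1}, so that~\eqref{inside_error} holds.

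Under Layer~2 the weight $T_n^{\ssup 3}$ is pinned at $T_{\hat m}^{\ssup 3}$ while $T_n^{\ssup 1}, T_n^{\ssup 2}$ grow at equal exponential rates of order $\mu^n/2$. Hence for $m - \hat m$ of order $\log(1/\e)$ one has $\Theta_m$ within $\e^2$ of $p^{\star}$, while $\pi_n$ oscillates deterministically between approximately $\sigma$ (at even values of $n - \hat m$) and approximately $(\tfrac{1}{6}, \tfrac{1}{6}, \tfrac{2}{3})$ (at odd values). Choosing $m - \hat m$ odd places $\pi_m$ near $(\tfrac{1}{6}, \tfrac{1}{6}, \tfrac{2}{3})$; combining this with $q_{[\Theta_m]} \approx (\tfrac{1}{4}, \tfrac{1}{4}, \tfrac{1}{2})$ and $e_{-1}(p^{\star}) = (\tfrac{1}{4}, \tfrac{1}{4}, -\tfrac{1}{2})$ gives $\alpha_m \approx 0$ and $\beta_m \approx -\theta^{\star}$. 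Thus condition~\eqref{inside0} is verified at time $m$ (with the $+$ sign in $|\beta_m + \theta^{\star}|$).

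Positivity of the intersection of the three layers follows by conditioning on successive $\sigma$-algebras, as in Proposition~\ref{p_pos1}. Layer~1 is positive by that proposition; Layer~2 is a conjunction of finitely many events, each prescribing a specific value in the support of an independent binomial random variable (``all particles'' or ``no particles'' at vertices $1$ and $2$, and a median-type constraint at vertex $3$), hence positive conditional on the past; Layer~3 is positive conditional on Layers~1--2 by Lemmas~\ref{l_n8}--\ref{l_rho8} and Proposition~\ref{p_error}, using $\Vert N_m \Vert$ of order $\mu^m$. Applying Lemma~\ref{pall} on the combined event yields $\tau = \infty$, $z_n \to 0$, $|\beta_n| \to \theta^{\star}$ and summability of $|x_{n+1} - x_n|$; hence $\Theta_n$ converges to some $p \in \partial\Sigma \setminus V$ and $\ell = \theta^{\star} > 0$ by Lemma~\ref{lemma_l}, establishing $\P(\Theta \in \partial\Sigma \setminus V, \ell > 0) > 0$. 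The main technical subtlety is that the per-step probability of Layer~2 decays exponentially in $\Vert N_n \Vert$; since only strict positivity (not a uniform lower bound) is required, any compatible finite choice of $\hat m$ and of $m - \hat m$ of order $\log(1/\e)$ suffices.
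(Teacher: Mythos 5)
Your construction follows the paper's own proof: prepare the system near $(\sigma,\sigma)$ by balanced branching, then for a stretch of time forbid traversals of edge $3$ while splitting the population at the third vertex evenly, so that $\Theta_n$ drifts towards $\hat\sigma=(\frac 1 2,\frac 1 2,0)$ and $\pi_n$ oscillates between $\approx\sigma$ and $\approx(\frac 1 6,\frac 1 6,\frac 2 3)$, and finally invoke Lemma~\ref{pall} on a typical-branching tail event. There is, however, a genuine gap in the step ``condition~\eqref{inside0} is verified at time $m$''. Condition~\eqref{inside0} demands $|\beta_m\pm\theta|<\e^2$ (and $|\alpha_m|,z_m<\e^2$) with the \emph{same} $\e$ that defines $K^{\star}_{\theta,\delta,\e}$, while your Layer~1 delivers only accuracy $\e$ at time $\hat m$. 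During Layer~2 the population dynamics is essentially $\pi\mapsto M_{\hat\sigma}\pi$, and the $e_{-1}(\hat\sigma)$-component of the deviation from the target two-cycle sits in an eigenvalue $-1$ direction: it is flipped each step but not contracted. Hence the $O(\e)$ error in the $\beta$-coordinate inherited from Layer~1 persists to time $m$, and you cannot conclude $|\beta_m+\theta^{\star}|<\e^2$. The paper resolves exactly this by introducing a second accuracy parameter $\hat\e$, chosen \emph{after} $\e$ and sufficiently small (in effect $\hat\e\ll\e^2$), for the closeness at the intermediate time; you need the same device, which costs nothing since the balanced-branching step of Proposition~\ref{p_pos1} works for any prescribed accuracy.

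A second, related gap: your three layers control the branching only up to time $\hat m$ and from time $m$ onwards, whereas the claims that $T_n^{\ssup 1},T_n^{\ssup 2}$ grow at equal rates of order $\mu^n/2$ and that $\pi_n$ ``oscillates deterministically'' on $[\hat m,m)$ require near-deterministic branching there (small $\Gamma_{n+1}$, $\frac{\mu\Vert N_n\Vert}{\Vert N_{n+1}\Vert}\approx 1$, $\rho_{n+1}\approx\rho$); without this, $\Upsilon_{n+1}$ can be of order one and the landing in $K^{\star}_{\theta^{\star},\delta,\e}$ fails on part of your event. The paper's event $\mathcal{Z}$ deliberately covers the forcing window, and since it constrains only the offspring variables and the total population it is compatible with the forced movements; adding the analogous requirement on $[\hat m,m)$ leaves your conditioning argument for positivity intact. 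Apart from these two fixable points, the remaining differences are harmless: your forcing phase lasts only $O(\log(1/\e))$ steps instead of $m/2$, you land on the opposite cycle point ($\beta_m\approx-\theta^{\star}$, permitted by the sign convention in~\eqref{inside0}), and Lemma~\ref{pall} gives $|\beta_n\pm\theta^{\star}(-1)^{n-m}|<\e$ rather than $|\beta_n|\to\theta^{\star}$, so one concludes $\ell>0$ (within $\e$ of $\theta^{\star}$), not $\ell=\theta^{\star}$ --- which is all the statement needs.
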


\begin{proof}
Let $\sigma=\big(\frac 1 3, \frac 1 3, \frac 1 3\big)$ and 
$\hat\sigma=\big(\frac 1 2, \frac 1 2, 0\big)$. Further, let $\theta=\frac 1 3$.  
\smallskip

Let $1<\nu<\hat\nu<\mu^{\frac 1 2}$ and let $\varpi$ be as in Lemma~\ref{l_rho8}. 
Let $\e>0$ be sufficiently small. Finally, suppose that $\hat m$ is sufficiently large and let $m=4\varpi(\hat m)$. 
\smallskip

Let 
\begin{align*}
\mathcal{E}'&=\Big\{\Big|B_{n+1}^{\ssup i}-\frac{1}{2}\sum_{k=1}^{N_n^{\ssup i}}Z^{\ssup i}_{n+1,k}\Big|\le \frac 1 2\text{ for all }i \text{ and all }n<\frac{m}{2}\Big\} \\
\mathcal{E}''&=\Big\{B_{n+1}^{\ssup 1}=0, \bar B_{n+1}^{\ssup 2}=0, \Big|B_{n+1}^{\ssup 3}-\frac{1}{2}\sum_{k=1}^{N_n^{\ssup 3}}Z^{\ssup 3}_{n+1,k}\Big|\le \frac 1 2\text{ for all }\frac{m}{2}\le n<m\Big\} 
\end{align*}
be the event that up to time $\frac{1}{2}m=2\varpi(\hat m)$  
roughly a half of the population of each vertex crosses each of the two adjacent edges, and 
the event that between the times $\frac{1}{2}m$ and $m$
the third edge is not traversed at all, and 
roughly a half of the population of the third vertex crosses each of the two adjacent edges. Further, 
let $\mathcal{E}=\mathcal{E}'\cap \mathcal{E}''$ and 
let $\mathcal{Z}$ and $\mathcal{C}$ be defined in the same way as in the proof of Proposition~\ref{p_pos1}. We will see that on the event 
\begin{align*}
\mathcal{A}=\mathcal{E}\cap \mathcal{Z}\cap \mathcal{C}
\end{align*}
(a) at time $\frac{1}{2}m$ the population is large and distributed roughly equally between the vertices, which ensures that (b) by the time 
$m$ it gets close to the edge equilibrium two-cycle regime $(\hat\sigma, q_{[\hat\sigma]}\pm \theta e_{-1}(\hat\sigma))$, and  (c) stays at an  equilibrium two-cycle regime forever. 
\smallskip

{\it Step 1.}  We have 
\begin{align*}
\P(\mathcal{A})>0,
\end{align*}
which can be shown in the same way as in {\it Step 1} of the proof of Proposition~\ref{p_pos1}. 
\smallskip

{\it Step 2.} Let $\hat\e>0$ be sufficiently small. 
On the event  $\mathcal{E}'\cap \mathcal{Z}$ 
\begin{align}
\label{centre}
\max\{\Vert\Theta_{\frac{m}{2}}-\sigma\Vert, \Vert\pi_{\frac{m}{2}}-\sigma\Vert\}<\hat\e,
\end{align}
which can be shown analogously to {\it Step 2} in the proof of Proposition~\ref{p_pos1}. 
\smallskip

{\it Step 3.} 
Let us show that on $\mathcal{E}\cap \mathcal{Z}$
\begin{align}
\label{centre1}
\max\big\{\Vert \Theta_m-\hat \sigma\Vert,  
%\Vert v_{m-1}+\theta e_{-1}(\sigma)\Vert,
\Vert \pi_m-q_{[\hat\sigma]}-\theta e_{-1}(\hat \sigma)\Vert\big\}<6\hat \e. 
\end{align}

Observe that 
$e_{-1}(\hat\sigma)=\big(\frac 1 4, \frac 1 4, -\frac 1 2\big)$ and 
$q_{[\hat\sigma]}+ \theta e_{-1}(\hat\sigma)=\sigma$. Hence it suffices to show that 
\begin{align}
\label{centre0}
\max\big\{\Vert \Theta_m-\hat \sigma\Vert,  
%\Vert v_{m-1}+\theta e_{-1}(\sigma)\Vert,
\Vert \pi_m-\sigma\Vert\big\}<6\hat \e. 
\end{align}

On the event $\mathcal{E}$ for all $\frac 1 2 m \le n<m$ the iterations~\eqref{def_t} and~\eqref{def_n} can be 
rewritten as  
\begin{align*}
\begin{array}{rl}
T_{n+1}^{\ssup 1}\!\!\!\!\! &=T_n^{\ssup 1}+\sum\limits_{k=1}^{N_n^{\ssup 2}}Z_{n+1, k}^{\ssup 2} 
+\frac 1 2 \sum\limits_{k=1}^{N_n^{\ssup 3}}Z_{n+1, k}^{\ssup 3} +\e_{n+1}^{\ssup 1},\\
T_{n+1}^{\ssup 2}\!\!\!\!\! &=T_n^{\ssup 2}
+ \sum\limits_{k=1}^{N_n^{\ssup 1}}Z_{n+1, k}^{\ssup 1} 
+\frac 1 2 \sum\limits_{k=1}^{N_n^{\ssup 3}}Z_{n+1, k}^{\ssup 3} 
+\e_{n+1}^{\ssup 2},\\
T_{n+1}^{\ssup 3}\!\!\!\!\! &=T_n^{\ssup 3},\phantom{+\sum\limits_{k=1}^{N_n^{\ssup 1}}} 
\end{array}
\qquad\text{and}\qquad 
\begin{array}{rl}
N_{n+1}^{\ssup 1}\!\!\!\!\! &=\frac 1 2 \sum\limits_{k=1}^{N_n^{\ssup 3}}Z_{n+1, k}^{\ssup 3}  
+ \delta_{n+1}^{\ssup 1},\\
N_{n+1}^{\ssup 2}\!\!\!\!\! &=\frac 1 2 \sum\limits_{k=1}^{N_n^{\ssup 3}}Z_{n+1, k}^{\ssup 3}  
+ \delta_{n+1}^{\ssup 2},\\
N_{n+1}^{\ssup 3}\!\!\!\!\! &= \sum\limits_{k=1}^{N_n^{\ssup 1}}Z^{\ssup 1} _{n+1, k} 
 +  \sum\limits_{k=1}^{N_n^{\ssup 2}}Z_{n+1, k}^{\ssup 2}, 
\end{array}
\end{align*}
where $|\e_{n+1}^{\ssup i}|\le 1$ and $|\delta_{n+1}^{\ssup i}|\le 1$ account for splitting 
a possibly odd number of particles into two almost equal halves for $i\in\{1,2\}$; we set 
$\e_{n+1}^{\ssup 3}=\delta_{n+1}^{\ssup 3}=0$. 
Normalising similarly to~\eqref{iter_th2} and~\eqref{iter_pi2} 
%and using $\frac{\Vert T_n\Vert }{\Vert T_{n+1}\Vert }=1-\rho_{n+1}$, 
we obtain
\begin{align}
\label{oo1}
\Theta_{n+1}&=(1-\rho)\Theta_n
+\rho(\one-[M_{\hat\sigma}+I]\pi_n)+\hat R_{n+1}\\
%\qquad\text{and}\qquad
\pi_{n+1}&=M_{\hat\sigma}\pi_n+\hat S_{n+1},
\label{oo5}
\end{align}
 where 
\begin{align*}
%\label{oo2}
\hat R_{n+1}&=(\rho_{n+1}-\rho)(\one-\Theta_n-[M_{\hat\sigma}+I]\pi_n)
-\rho_{n+1}[M_{\hat\sigma}+I]\Upsilon_{n+1}
+\frac{\e_{n+1}}{\Vert T_{n+1}\Vert},\\
\hat S_{n+1}&=M_{\hat\sigma}\Upsilon_{n+1}+\frac{\delta_{n+1}}{\Vert N_{n+1}\Vert}.
%\label{oo3}
\end{align*}
Due to $M_{\hat{\sigma}}q_{[\hat\sigma]}=q_{[\hat\sigma]}$ it follows from~\eqref{oo1} and~\eqref{oo5} that  
\begin{align*}
\left(\begin{array}{c}
\Theta_{n+1}-\hat\sigma\ \ \ \\
\pi_{n+1}-q_{[\hat\sigma]}
\end{array}\right)
=A
\left(\begin{array}{c}
\Theta_{n}-\hat\sigma\ \ \ \\
\pi_{n}-q_{[\hat\sigma]}
\end{array}\right)
+\left(\begin{array}{c}
\hat R_{n+1}\\ \hat S_{n+1}
\end{array}\right), 
%\end{align*}
\qquad\text{where}\quad 
%\begin{align*}
A=\left(\begin{array}{cc}
(1-\rho)I & -\rho[M_{\hat\sigma}+I]\\
0 & M_{\hat\sigma}
\end{array}\right).
\end{align*}
Iterating, we obtain 
\begin{align}
\label{oo8}
\left(\begin{array}{c}
\Theta_m-\hat\sigma\ \ \ \\
\pi_m-q_{[\hat\sigma]}
\end{array}\right)
&=A^{\frac{m}{2}}
\left(\begin{array}{c}
\Theta_{\frac{m}{2}}-\hat\sigma\ \ \ \\
\pi_{\frac{m}{2}}-q_{[\hat\sigma]}
\end{array}\right)+\sum_{n=m/2}^{m-1}
A^{m-n-1}
\left(\begin{array}{c}
\hat R_{n+1}\\ \hat S_{n+1}
\end{array}\right).
\end{align}
It is easy to see that for any $n$
\begin{align}
\label{oo6}
A^n=\left(\begin{array}{cc}
(1-\rho)^nI & B_n\\
0 & M_{\hat\sigma}^n
\end{array}\right),
\qquad\text{ where} \quad
B_n=-\rho[M_{\hat\sigma}+I]\sum\limits_{i=0}^{n-1}M_{\hat\sigma}^i(1-\rho)^{n-i-1}.
\end{align}
Observe that since
$\Vert M_{\hat\sigma}\Vert=1$, we  have
\begin{align}
\label{oo9}
\Vert B_n\Vert \le 2\rho\sum\limits_{i=0}^{n-1}(1-\rho)^{n-i-1}\le 2.
\end{align}

First, by $M_{\hat\sigma}^2\sigma=\sigma$, 
$M_{\hat\sigma}q_{[\hat\sigma]}=q_{[\hat\sigma]}$, 
$\Vert M_{\hat\sigma}\Vert=1$, \eqref{centre}, \eqref{oo8}, 
~\eqref{oo6}, and due to $\frac 1 2 m=2\varpi(\hat m)$ being even   
%and~\eqref{oo7} 
\begin{align}
\Vert\pi_m-\sigma\Vert
&=\Big\Vert M_{\hat\sigma}^{\frac m 2}(\pi_{\frac{m}{2}}-\sigma)
+\sum_{n=m/2}^{m-1}M_{\hat\sigma}^{m-n-1}\hat S_{n+1}\Big\Vert
\le \Vert \pi_{\frac{m}{2}}-\sigma\Vert
+\sum_{n=m/2}^{m-1}\Vert \hat S_{n+1}\Vert\notag\\
&\le \hat\e+\sum_{n=m/2}^{m-1}\Big[\Vert \Upsilon_{n+1}\Vert +  \frac{2}{\Vert N_{n+1}\Vert} \Big]
\le  \hat\e+\sum_{n=m/2}^{m-1}\Big[\Vert \Gamma_{n+1}\Vert
+\Big|\frac{\mu\Vert N_n\Vert}{\Vert N_{n+1}\Vert}-1\Big|
+ \frac{2}{\Vert N_{n+1}\Vert} \Big]\notag\\
&\le \hat\e+2 \sum_{n=m/2}^{m-1} \big[\nu^{-n} + \hat\nu^{-2(n+1)}\big]<2\hat\e. 
\label{oo11}
\end{align}

Second, it follows from \eqref{oo8} and~\eqref{oo6}
%\eqref{oo7}, \eqref{oo8} 
that
\begin{align}
\label{oo12}
\Theta_m-\hat\sigma
&= (1-\rho)^{\frac{m}{2}}(\Theta_{\frac{m}{2}}-\hat\sigma)
+B_{\frac{m}{2}}(\pi_{\frac{m}{2}}-q_{[\hat\sigma]})
+\sum_{n=m/2}^{m-1}(1-\rho)^{m-n-1}\hat R_{n+1}
+\sum_{n=m/2}^{m-1}B_{m-n-1}\hat S_{n+1}.
\end{align}
For the first term, we observe that 
\begin{align}
\label{oo10}
\Vert(1-\rho)^{\frac{m}{2}}(\Theta_{\frac{m}{2}}-\hat\sigma)\Vert 
\le 2(1-\rho)^{\frac{m}{2}}< \hat \e. 
\end{align}
For the second term, we have $[M_{\hat\sigma}+I](\sigma-q_{[\hat\sigma}])=0$. Combining this with 
~\eqref{oo6} we obtain 
\begin{align*}
B_{\frac{m}{2}}(\pi_{\frac{m}{2}}-q_{[\hat\sigma]})
=B_{\frac{m}{2}}(\pi_{\frac{m}{2}}-\sigma)+B_{\frac{m}{2}}(\sigma-q_{[\hat\sigma]})
=B_{\frac{m}{2}}(\pi_{\frac{m}{2}}-\sigma),
\end{align*}
and it follows now from~\eqref{centre} and~\eqref{oo8} that 
\begin{align}
\label{oo9}
\Vert B_{\frac{m}{2}}(\pi_{\frac{m}{2}}-q_{[\hat\sigma]})\Vert <2 \hat\e.
\end{align}
For the third term, we observe that on $\mathcal{Z}$, using $\Vert M_{\hat\sigma}\Vert=1$, ~\eqref{upga} and~\eqref{rho_inv}, we have for all $\frac{1}{2} m\le n<m$
\begin{align*}
\Vert \hat R_{n+1}\Vert\le  5|\rho_{n+1}^{-1}-\rho^{-1}|+2\Vert \Gamma_{n+1}\Vert
+2\Big|\frac{\mu\Vert N_n\Vert}{\Vert N_{n+1}\Vert}-1\Big| + \frac{2}{\Vert N_{n+1}\Vert}
\le 9\nu^{-n}+2\hat\nu^{-2(n+1)}<\rho\hat\e,
\end{align*}
and hence 
\begin{align}
\label{oo13}
\Big\Vert \sum_{n=m/2}^{m-1}(1-\rho)^{m-n-1}\hat R_{n+1}\Big\Vert 
<\rho\hat\e\sum_{i=0}^{\infty}(1-\rho)^i=\hat \e.  
\end{align}
Finally, the fourth term is treated similarly to~\eqref{oo11}%.  Using~\eqref{oo8} we obtain\gnote{already used previously, here we are just using \eqref{oo11}}
, obtaining
\begin{align}
\label{oo14}
\Big\Vert \sum_{n=m/2}^{m-1}B_{m-n-1}\hat S_{n+1}\Big\Vert 
\le 2 \sum_{n=m/2}^{m-1}\Vert\hat S_{n+1}\Vert < 2\hat\e.
\end{align}
Substituting~\eqref{oo10}, \eqref{oo9}, \eqref{oo13} and~\eqref{oo14} into~\eqref{oo12}
we obtain 
\begin{align*}
\Vert\Theta_m-\hat\sigma \Vert < 6\hat\e. 
\end{align*}
Combining this with~\eqref{oo11} we obtain~\eqref{centre0} as required.
\smallskip

{\it Step 4.} It will follow now from Lemma~\ref{pall} that $\Theta\in \partial \Sigma$ and $\ell>0$ 
on the event $\mathcal{A}$. It just remains to show that the assumptions
~\eqref{inside0} and~\eqref{inside_error} of the lemma are satisfied. 
\smallskip

Let $\bar\nu<\nu$ (which will be used in the lemma instead of $\nu$), and let us choose the rest of the parameters $\delta, M, \e_0, \gamma,\eta,\e$ as in the lemma. Observe that~\eqref{inside0} follows from~\eqref{centre1} if $\hat\e$ is sufficiently small. Further, on the event $\mathcal{C}$ we have 
obtain similarly to~\eqref{u9} that 
\begin{align*}
\Vert R_{n+1}\Vert +\Vert S_{n+1}\Vert
\le 15 \nu^{-n}<\bar\nu^{-n}<\e^4
\end{align*}
for all $n\ge m$ since $m$ is sufficiently large. 
\end{proof}
\smallskip

\begin{proof}[Proof of Theorem~\ref{th_pos}] The statements follow immediately from Propositions~\ref{p_pos1} and~\ref{p_pos2}.
\end{proof}

\bigskip

%%%%%%%%%%%%%%%%%%%%%%%%%%%%%%%%%%%%%%%%
%%%%%%%%%%%%%%%%%%%%%%%%%%%%%%%%%%%%%%%%
%%%%%%%%%%%%%%%%%%%%%%%%%%%%%%%%%%%%%%%%

\section{No monopoly}
\label{s_mon}

In this section we discuss why the number of traversals of each edge grows super-polynomially. The main technique here is to 
split the number of crossings into a martingale part and an increasing process, and show that the former is controlled by the latter. 
We can then obtain the desired growth from that of the increasing process, which is easier to tackle due to its monotonicity and lower level of randomness.   

\begin{proof}[Proof of Theorem~\ref{th_mon}] 
We will show that for each $i$ and $m\in \N$, $T^{\ssup i}_n>n^m$ eventually almost surely.
\smallskip

Let $i$ be fixed. 
By~\eqref{def_t} we have for all $n$
\begin{align*}
T_{n+1}^{\ssup i}
%=T_0^{\ssup i}+\sum_{i=1}^n(\mu N_{i}^{\ssup 3}-B_{i+1}^{\ssup 3}+B_{i+1}^{\ssup 2})
=T_0^{\ssup i}+\sum_{k=0}^n(B_{k+1}^{\ssup{i\oplus 1}}+\bar B_{k+1}^{\ssup{i\ominus 1}})
=T_0^{\ssup i}+M_{n+1}+Y_{n+1},
\end{align*}
with  
\begin{align*}
M_n=\sum_{k=0}^{n-1}C_{k+1}^{\ssup{i\oplus 1}}+\sum_{k=0}^{n-1}\bar C_{k+1}^{\ssup{i\ominus 1}}
\qquad \text{and}\qquad 
Y_n=\sum_{k=0}^{n-1}\Big[
P_k^{\ssup{i\oplus 1}}\sum_{j=1}^{N_k^{\ssup{i\oplus 1}}}Z^{\ssup{i\oplus 1}}_{k+1,j}
+\bar P_k^{\ssup{i\ominus 1}}\sum_{j=1}^{N_k^{\ssup{i\ominus 1}}}Z^{\ssup{i\ominus 1}}_{k+1,j}\Big],
\end{align*}
where $(C_{n+1})$ are centralised binomials wee defined in~\eqref{cb}
and $(\bar C_{n+1})$ are defined similarly by 
\begin{align*}
\bar{C}_{k+1}^{\ssup i}=\bar{B}_{k+1}^{\ssup i}-\bar{P}_k^{\ssup i}\sum_{j=1}^{N_k^{\ssup i}}Z^{\ssup i}_{k+1,j}.
\end{align*}
Observe that $(M_n)$ is a martingale with respect to $(\mathcal{F}_n^{\star})$ and its quadratic variation satisfies 
\begin{align}
\label{my}
\langle M\rangle_n=\sum_{k=0}^{n-1}\Big[
(1-P_k^{\ssup{i\oplus 1}})P_k^{\ssup{i\oplus 1}}\sum_{j=1}^{N_k^{\ssup{i\oplus 1}}}Z^{\ssup{i\oplus 1}}_{k+1,j}
+(1-\bar P_k^{\ssup{i\ominus 1}})\bar P_k^{\ssup{i\ominus 1}}\sum_{j=1}^{N_k^{\ssup{i\ominus 1}}}Z^{\ssup{i\ominus 1}}_{k+1,j}\Big]\le Y_n, 
\end{align}
and $(Y_n)$ is an increasing previsible process with respect to $(\mathcal{F}_n^{\star})$. 
\smallskip

If the quadratic variation $\langle M\rangle_n$ converges then $M_n$ converges and hence $T_n^{\ssup i}$ grows at the same rate as $Y_n$; 
if $\langle M\rangle_n\to \infty$ then $\frac{M_n}{\langle M\rangle_n}\to 0$, see~\cite[\S 12.14]{Will91}%{williams}
. 
By~\eqref{my} we have $\frac{M_n}{Y_n}\to 0$, which also implies that $T_n^{\ssup i}$ grows at the same rate as $Y_n$. 
\smallskip

Observe that since $P(Z=0)=0$
we have almost surely
\begin{align}
\label{lb_m}
Y_n\ge  \sum_{k=0}^{n-1}\big[
P_k^{\ssup{i\oplus 1}}N_k^{\ssup{i\oplus 1}}
+\bar P_k^{\ssup{i\ominus 1}}N_k^{\ssup{i\ominus 1}}\big]. 
\end{align}

We have using Lemma~\ref{l_rho8}
\begin{align}
\label{th_m}
\Theta_{k}^{\ssup i}=\frac{T_k^{\ssup i}}{\Vert T_k\Vert }
\ge \frac{\rho_k T_0^{\ssup i}}{\Vert N_k\Vert }
\ge \frac{\rho T_{0}^{\ssup i}}{2\Vert N_k\Vert }
\end{align}
and hence
\begin{align}
\label{p_m}
P_k^{\ssup{i\oplus 1}}\ge \frac{\rho T_{0}^{\ssup i}}{2\Vert N_k\Vert }
\qquad\text{and}\qquad
\bar P_{k}^{\ssup{i\ominus 1}}\ge \frac{\rho T_{0}^{\ssup i}}{2\Vert N_k\Vert }
\end{align}
eventually for all $k$. 
Further, eventually
for all $k$, we have  
\begin{align}
\label{n1_m}
\text{either}\quad
N_k^{\ssup{i\oplus 1}}+N_k^{\ssup{i\ominus 1}}\ge \frac 1 2 \Vert N_k\Vert  
\quad\text{or}\quad N_k^{\ssup i}\ge  \frac 1 2 \Vert N_k\Vert,
\end{align}
the latter implying  by Lemma~\ref{l_n11} that 
\begin{align}
\label{n2_m}
 N_{k+1}^{\ssup{i\oplus 1}}+N_{k+1}^{\ssup{i\ominus 1}}\ge  \frac 1 2 \Vert N_k\Vert \ge \frac{1}{4\mu}\Vert N_{k+1}\Vert.  
 \end{align}
% Observe that Lemma~\ref{ks} also implies that 
 It follows now from~\eqref{p_m}, \eqref{n1_m} and~\eqref{n2_m} that eventually
\begin{align*}
&N_{k+1}^{\ssup{i\oplus 1}}P_{k+1}^{\ssup{i\oplus 1}}+N_{k+1}^{\ssup{i\ominus 1}}\bar P_{k+1}^{\ssup{i\ominus 1}}
+N_k^{\ssup{i\oplus 1}}P_k^{\ssup{i\oplus 1}}+N_k^{\ssup{i\ominus 1}}\bar P_k^{\ssup{i\ominus 1}}
\ge \frac{\rho T_{0}^{\ssup i}}{8\mu},
%\label{np}
\end{align*}
which yields by~\eqref{lb_m} that $(Y_n)$ and hence $T_n^{\ssup i}$ grow at least linearly. 
\smallskip

Let us show by induction over $m$ that $(T_n^{\ssup i})$ grows at least as $n^m$. Suppose this is true for some $m$ and thus there is $c$ such that 
$T_n^{\ssup i}\ge c n^m$ eventually for all $n$. Similarly to~\eqref{th_m} and~\eqref{p_m} we have 
\begin{align}
\label{th_m2}
\Theta_{k}^{\ssup i}=\frac{T_k^{\ssup i}}{\Vert T_k\Vert }
\ge \frac{c\rho_k k^m}{\Vert N_k\Vert }
\ge \frac{c\rho k^m}{2\Vert N_k\Vert }
\end{align}
and hence
\begin{align}
\label{p_m2}
P_k^{\ssup{i\oplus 1}}\ge\frac{c\rho k^m}{2\Vert N_k\Vert }
\qquad\text{and}\qquad
\bar P_{k}^{\ssup{i\ominus 1}}\ge \frac{c\rho k^m}{2\Vert N_k\Vert }
\end{align}
eventually for all $k$. 
It follows now from~\eqref{n1_m}, \eqref{n2_m} and~\eqref{p_m2} that  
\begin{align*}
&N_{k+1}^{\ssup{i\oplus 1}}P_{k+1}^{\ssup{i\oplus 1}}+N_{k+1}^{\ssup{i\ominus 1}}\bar P_{k+1}^{\ssup{i\ominus 1}}
+N_k^{\ssup{i\oplus 1}}P_k^{\ssup{i\oplus 1}}+N_k^{\ssup{i\ominus 1}}\bar P_k^{\ssup{i\ominus 1}}
\ge \frac{c\rho k^m}{8\mu},
%\label{np}
\end{align*}
which yields by~\eqref{lb_m} that $(Y_n)$ and hence $T_n^{\ssup i}$ grow at least as $n^{m+1}$. 
\end{proof}
\bigskip
%%%%%%%%%%%%%%%%%%%%%%%%%%%%%%%%%%%%%%%%%
%%%%%%%%%%%%%%%%%%%%%%%%%%%%%%%%%%%%%%%%%
%%%%%%%%%%%%%%%%%%%%%%%%%%%%%%%%%%%%%%%%%

%\section{Extra}
%
%Let $V_n=(-1)^n v_n$. It follows from~\eqref{iter_v} and~\eqref{iter_incr} that 
%\begin{align*}
%V_{n+1}-V_n&=-\Big(1-\frac{\rho}{2}\Big)(M_{\Theta_n}+I)V_n\\
%\Theta_{n+1}-\Theta_n&=-\rho (M_{\Theta_n}+I)(-1)^n V_n
%\end{align*}
%implying 
%\begin{align*}
%V_{n+1}-V_n&=\Big(\frac 1 \rho-\frac{1}{2}\Big)(-1)^n(\Theta_{n+1}-\Theta_n)
%\end{align*}
%Suppose $\rho =1$. 
%\begin{align*}
%\Theta_{n+1}-\Theta_n=2(-1)^n(V_{n+1}-V_n)
%\end{align*}
%\begin{align*}
%\Theta_{n+2}-\Theta_n=2(-1)^{n+1}[V_{n+2}-2V_{n+1}+V_n]
%\end{align*}
%\begin{align*}
%\Theta_{n+1}-\Theta_n=-2[v_{n+1}+v_n]
%\end{align*}
%\begin{align*}
%\Theta_{n+2}-\Theta_n=-2[v_{n+2}+2v_{n+1}+v_n]
%\end{align*}
%
%%%%%%%%%%%%%%%%%%%%%%%%%%%%%%%%%%%%%%%%%
%%%%%%%%%%%%%%%%%%%%%%%%%%%%%%%%%%%%%%%%%
%%%%%%%%%%%%%%%%%%%%%%%%%%%%%%%%%%%%%%%%%

\section*{Aknowledgements}
This research was partially supported by UCL Studentships, the EPSRC grants EP/W005573/1 and EP/X021696/1. 
\bibliographystyle{abbrv}
\bibliography{gwbib}

\begin{comment}

\end{comment}
\end{document}